\documentclass[a4paper,11pt,twoside,reqno,final]{amsart}
\usepackage{amsfonts}
\usepackage{amsmath}
\usepackage{mathrsfs, enumerate, amssymb}
\usepackage{tikz}
\usepackage{fancybox}
\usepackage{pstricks}
\usepackage{pst-node}
\usepackage[headinclude,DIV13]{typearea}
\usepackage{hyperref}
\usepackage{color}
\usepackage{geometry}
\usepackage{graphicx, psfrag}

\setcounter{MaxMatrixCols}{10}

\geometry{a4paper,portrait,left=2.5cm,right=2.5cm,top=2.5cm,bottom=1.5cm}
\textheight = 660pt
\voffset=1 cm
\newtheorem{theorem}{Theorem}

\newtheorem{corollary}{Corollary}
\newtheorem{definition}{Definition}

\newtheorem{lemma}{Lemma}

\newtheorem{remark}{Remark}
\renewenvironment{proof}[1][Proof]{\noindent\textbf{#1.} }{\ \rule{0.5em}{0.5em}}

\begin{document}
\title[Reduced two-type branching processes with infinite variance]{Reduced
two-type decomposable critical branching processes with possibly infinite
variance}
\thanks{This work is supported by the grant of RFBR 14-01-00318.}
\author{Charline Smadi}
\address{Irstea, UR LISC, Laboratoire d'Ing\'enierie des Syst\`emes
Complexes, 9 avenue Blaise Pascal-CS 20085, 63178 Aubi\`ere, France and
Department of Statistics, University of Oxford, 1 South Parks Road, Oxford
OX1 3TG, UK}
\email{charline.smadi@polytechnique.edu}
\author{Vladimir A. Vatutin}
\address{Department of Discrete Mathematics, Steklov Mathematical Institute,
8, Gubkin str., 119991, Moscow, Russia}
\email{vatutin@mi.ras.ru}
\date{\today }
\maketitle

\begin{abstract}
We consider a Galton-Watson process $\mathbf{Z}%
(n)=(Z_{1}(n),Z_{2}(n))$ with two types of particles. Particles of type 2
may produce offspring of both types while particles of type 1 may produce
particles of their own type only. Let $Z_{i}(m,n)$ be the number of
particles of type $i$ at time $m<n$ having offspring at time $n$. Assuming that the process is critical and
that the variance of the offspring distribution may be infinite
we describe the asymptotic behavior, as $m,n\rightarrow \infty $ of the law of $\mathbf{Z}(m,n)=(Z_{1}(m,n),Z_{2}(m,n))$ given $\mathbf{Z%
}(n)\neq \mathbf{0}$. We find three different types of
coexistence of particles of both types. Besides, we describe,
in the three cases, the distributions of the birth time and the type of the most recent common ancestor of individuals alive at time 
$n\rightarrow \infty .$
\end{abstract}

\section{Introduction}

Describing the genealogy of populations yields a better understanding of
patterns of variation in genetic data. It is a key issue in population
genetics and has been the subject of many investigations in recent years. In
this work, we are interested in the genealogy of a population modeled by a
two-type decomposable Galton-Watson branching process conditionally on its
survival until a large time $n$.

Let $Z_{i}(n)$ be the number of type $i$ particles $\left( i\in \left\{
1,2\right\} \right) $ alive at time $n\in \left\{ 0,1,...\right\} $, $%
\mathbf{Z}(n)=\left( Z_{1}(n),Z_{2}(n)\right) $, and denote by $\mathbf{Z}%
(0)=\mathbf{e}_{i}$ the event
\begin{equation*}
Z_{1}(0)+Z_{2}(0)=Z_{i}(0)=1.
\end{equation*}%
We assume that the process is critical, that is to
say
\begin{equation*}
\mathbf{E}\left[ Z_{i}(1)|\mathbf{Z}(0)=\mathbf{e}_{i}\right] =1,\quad i\in
\{1,2\}.
\end{equation*}%
Type $1$ particles may produce only particles of their own type while type $%
2$ particles are able to produce particles of both types. The production of
type $1$ particles by type $2$ particles can be seen as recurrent mutations
or migrations. In the finite variance case, this process has been studied in
\cite{sugitani1979limit, vatutin2014macroscopic}, and in the case of
infinite variance it has been investigated in \cite{Z} and \cite{VS}
in detail. Zubkov \cite{Z} and Vatutin and Sagitov \cite{VS}
have analyzed the asymptotic behavior of the survival probability of the
process given $\mathbf{Z}(0)=\mathbf{e}_{2}$, and described
the conditional limiting distribution of the population size at
time $n\rightarrow \infty $ given $\mathbf{Z}(n)\neq \mathbf{0}$
(where $\mathbf{0}=(0,0)$) and $\mathbf{Z}(0)=\mathbf{e}_{2}$. Our
motivation for considering the infinite variance case comes from \cite%
{eldon2006coalescent}. In this work Eldon and Wakeley, using genetic data
from a population of pacific oysters, showed that one individual was able to
generate enough offsprings in one birth event to produce a significant
proportion of the population. This is also the case for many
types of fungi and viruses.

The present paper deals with the structure of the family tree of the process
$\left\{ \mathbf{Z}(m),0\leq m\leq n\right\} $ given $\mathbf{Z}%
(n)\neq \mathbf{0}$. More precisely, we investigate the properties of the
process
\begin{equation*}
\mathbf{Z}\left( m,n\right) =(Z_{1}(m,n),Z_{2}(m,n))
\end{equation*}%
where $Z_{i}\left( m,n\right) $ is the number of type $i$ particles alive at
time $m<n$ and having descendants at time $n$. The
process $\mathbf{Z}\left( \cdot ,n\right) $ is called a reduced branching
process and can be thought of as the family tree relating the individuals
alive at time $n$. Two important characteristics of the reduced process are
the birth moment $\beta _{n}$ and the type $\mathcal{T}_{n}$ of the most
recent common ancestor (MRCA) of all individuals alive at time $n$ defined
as
\begin{equation}
\beta _{n}=\max \left\{ m<n:Z_{1}\left( m,n\right) +Z_{2}\left( m,n\right)
=1\right\} ,\quad \mathcal{T}_{n}=\{i\in \{1,2\}:Z_{i}\left(
\beta _{n},n\right) =1\}.  \label{defMRCA}
\end{equation}%
Limiting reduced trees have been extensively studied in the
literature. In the monotype and finite variance case, the distribution of $%
\beta _{n}$ already appears in Zubkov \cite{zubkov1976limiting}, and the
full structure of the reduced tree is derived by Fleischmann and
Siegmund-Schultze \cite{fleischmann1977structure}. Analogous results in the
stable case and/or in the more general setting of multitype indecomposable
branching processes with finite or infinite offspring variance can be found
in Vatutin \cite{vatutin1977limit,Vat14} and Yakymiv \cite%
{yakymiv1981reduced}. Finally, Duquesne and Le Gall \cite{duquesne2002random}
derived general properties of limiting reduced trees in the continuous
setting.

For $s_{1},s_{2}\in \lbrack 0,1]$, let
\begin{equation*}
F_{1}(s_{1})=\mathbf{E}\left[ s_{1}^{Z_{1}(1)}|\mathbf{Z}(0)=\mathbf{e}_{1}%
\right] \quad \text{and}\quad F_{21}(s_{1},s_{2})=\mathbf{E}\left[
s_{1}^{Z_{1}(1)}s_{2}^{Z_{2}(1)}|\mathbf{Z}(0)=\mathbf{e}_{2}\right]
\end{equation*}%
be the offspring generating functions for particles of both types. Our basic
assumptions on the offspring distributions are formulated in terms of the
generating functions $F_{1}(s_{1})$ and $F_{21}(s_{1},s_{2})$ as $%
s_{1},s_{2}\uparrow 1$:
\begin{equation}
F_{1}(s_{1})=s_{1}+\left( 1-s_{1}\right) ^{1+\alpha _{1}}L_{1}\left(
1-s_{1}\right) ,  \label{F1}
\end{equation}%
and
\begin{equation}
F_{21}(s_{1},s_{2})=s_{2}+\left( 1-s_{2}\right) ^{1+\alpha _{2}}L_{2}\left(
1-s_{2}\right) -(A_{21}-\rho \left( s_{1},s_{2}\right) )\left(
1-s_{1}\right) ,  \label{F2}
\end{equation}%
where $0<\alpha _{1},\alpha _{2}\leq 1,$ $L_{1}(x)$ and $L_{2}(x)$ are
slowly varying functions as $x\downarrow 0$,
\begin{equation}
A_{21}=\mathbf{E}\left[ Z_{1}(1)|\mathbf{Z}(0)=\mathbf{e}_{2}\right] \in
\left( 0,\infty \right) ,  \label{defA21}
\end{equation}%
and the function $\rho \left( s_{1},s_{2}\right) \rightarrow 0$ as $%
s_{1}s_{2}\uparrow 1$. The form of the probability generating function $%
F_{1} $ is natural if we aim at modeling a population where the offspring
distribution has an infinite variance. Indeed, \eqref{F1} holds if and only
if the distribution of $Z_{1}(1)$ is in the domain of attraction of a stable
law with index $1+\alpha _{1}$. Such a monotype branching process has been
investigated by many authors (see for example \cite%
{Sl68,borovkov1996distribution,wachtel2008limit,pakes2010critical}).

Introduce the following notation, for $i\in \{1,2\}$:
\begin{equation*}
Q_{i}(n)=\mathbf{P}\left( Z_{i}(n)>0|\mathbf{Z}(0)=\mathbf{e}_{i}\right)
\quad \text{and}\quad Q_{21}(n)=\mathbf{P}\left( \mathbf{Z}(n)\neq \mathbf{0}%
|\mathbf{Z}(0)=\mathbf{e}_{2}\right) .
\end{equation*}%
It is known \cite{Sl68} that, given Condition (\ref{F2}) with $s_{1}=1$ and
Condition (\ref{F1}),
\begin{equation}
Q_{i}(n)=(n+1)^{-1/\alpha _{i}}l_{i}(n),\quad n\rightarrow \infty ,
\label{AsQ}
\end{equation}%
where $l_{i}(n)$ is a slowly varying function as $n\rightarrow \infty ,$%
\begin{equation}
\alpha _{i}nQ_{i}^{\alpha _{i}}(n)L_{i}\left( Q_{i}(n)\right) \sim 1,\quad
n\rightarrow \infty ,  \label{ReprQ}
\end{equation}%
and for $\lambda >0$,
\begin{equation}
\lim_{n\rightarrow \infty }\mathbf{E}\Big[e^{-\lambda Q_{i}(n)Z_{i}(n)}|%
\mathbf{Z}(0)=\mathbf{e}_{i},Z_{i}(n)>0\Big]=1-(1+\lambda ^{-\alpha
_{i}})^{-1/\alpha _{i}}.  \label{monotype}
\end{equation}

We will see that three types of behavior emerge for the reduced trees,
depending on the relative values of $\alpha _{1}$ and $\alpha _{2}$. More
precisely, we will distinguish three cases:
\begin{equation}
nQ_{1}(n)=o\left( Q_{2}(n)\right) ,\ n\rightarrow \infty ,\quad \text{%
(Theorems \ref{T_ZubQ1} and \ref{T_ZubQ1micro})},  \label{Q1NeglQ2}
\end{equation}%
\begin{equation}
Q_{2}(n)=o(nQ_{1}(n)),\ n\rightarrow \infty ,\quad \text{(Theorems \ref%
{T_zubQ3} and \ref{T_zubQ3micro})},  \label{VaSag0}
\end{equation}%
\begin{equation}
nQ_{1}(n)\sim \sigma Q_{2}(n),\ \sigma \in \left( 0,\infty \right) ,\
n\rightarrow \infty ,\quad \text{(Theorems \ref{T_VatSag} and \ref%
{T_VatSagmicro})}.  \label{VaSag}
\end{equation}%
We will show that under Condition~(\ref{VaSag0}), the reduced
processes have similar properties as in the case of finite variance studied
in detail by Vatutin in \cite{Vat14} and \cite{VD15}, while 
under condition~(\ref{Q1NeglQ2}) or~(\ref{VaSag}) the dynamics
of the reduced processes has an essentially different nature.\newline

The paper is organized as follows. In Section \ref{sectionmacro}
we describe the local behavior of the reduced process. Section \ref%
{sectionmicro} is devoted to its global behavior and to the law of the MRCA
of individuals alive at a large time $n$. In Section \ref%
{sectionauxresults} we give auxiliary results needed in the proofs. Sections %
\ref{proofTh1} to \ref{proofTh5} are devoted to the analysis of the local
structure of the reduced process. In Sections \ref%
{S_finiteDimen} and \ref{S_tightness} we consider the global behavior of the
reduced process. Finally, in Section \ref{S_MRCA}, we
derive the MRCA limit distributions.\newline 

In the sequel $\mathbf{P}_{i}$ and $\mathbf{E}_{i}$ will denote
the probability and expectation conditionally on $\mathbf{Z}(0)=\mathbf{e}%
_{i}$. Sometimes it will be convenient to write $\mathbf{P}$ and $\mathbf{E}$
for $\mathbf{P}_{2}$ and $\mathbf{E}_{2}$. Finally, the relations
\begin{equation*}
a(n)\sim b(n),\quad a(n)=O(b(n)),\quad a(n)=o(b(n)),\quad \text{and}\quad
a(n)\ll b(n)
\end{equation*}%
will be understood (if otherwise is not stated) as $n\rightarrow \infty $
and the symbols $C,C_{1},C_{2},....$ will denote positive constants which may vary from line to line.

\section{Local characteristics of the limit processes}

\label{sectionmacro}

We first deal with Condition \eqref{Q1NeglQ2}. Notice that $%
\alpha _{1}$ is necessarily less than $1$, as otherwise $nQ_{1}(n)$ would be
a slowly varying function while $Q_{2}(n)$ is a regularly
varying function with negative index. This allows us to introduce an
integer-valued function $h^{\ast }(n)$ satisfying
\begin{equation}
\frac{\alpha _{1}A_{21}}{1-\alpha _{1}}h^{\ast }(n)Q_{1}(h^{\ast }(n))\sim
Q_{2}(n).  \label{Mu1}
\end{equation}%
Using properties of regularly varying functions (see, for instance, \cite%
{Sen77}, Ch.1, Lemma 1.8), we get
\begin{equation}
h^{\ast }(n)=n^{\alpha _{1}/(\alpha _{2}(1-\alpha _{1}))}l_{h^{\ast }}(n),
\label{hstar}
\end{equation}%
where $l_{h^{\ast }}(n)$ is a function slowly varying at infinity. Moreover,
$h^{\ast }(n)\ll n$ (see Lemma~\ref{L_regular} page~\pageref{Lemma1}). In what follows equalities of the form $\mathbf{Z}(x,n)=y$ for $%
x,y\in (0,\infty )$ will be understood as $\mathbf{Z}(\left[ x%
\right] ,n)=\left[ y\right] $ where $\left[ w\right] $ stands for the
integer part of $w$. The same agreement will be in force in other similar
situations. For instance, $Q_{i}(x),x\in \lbrack 0,\infty )$ will be treated
as $Q_{i}(\left[ x\right] )$.

\begin{theorem}
\label{T_ZubQ1}Let Conditions \eqref{F1}-\eqref{defA21} and \eqref{Q1NeglQ2}
be satisfied and $(s_{1},s_{2})$ be in $(0,1)^2$.

\begin{enumerate}
\item[(0)] If $1\leq m\ll n$, then
\begin{equation}
\lim_{n\rightarrow \infty }\mathbf{E}\left[
s_{1}^{Z_{1}(m,n)}s_{2}^{Z_{2}(m,n)}|\mathbf{Z}(n)\neq \mathbf{0}\right]
=s_{2};  \label{Zub11}
\end{equation}

\item[(1)] For any $a\in (0,1)$
\begin{equation}
\lim_{n\rightarrow \infty }\mathbf{E}\left[
s_{1}^{Z_{1}(an,n)}s_{2}^{Z_{2}(an,n)}|\mathbf{Z}(n)\neq \mathbf{0}\right]
=1-\Big(a+\left( 1-a\right) (1-s_{2})^{-\alpha _{2}}\Big)^{-\frac{1}{\alpha
_{2}}};  \label{Zub12}
\end{equation}

\item[(2)] For any function $h$ satisfying $h^{\ast }(n)\ll h(n)\ll n$ and $%
m=n-h(n)$, and $\lambda _{2}\geq 0$
\begin{equation*}
\lim_{n\rightarrow \infty }\mathbf{E}\left[ s_{1}^{Z_{1}(m,n)}\exp \left\{
-\lambda _{2}\frac{Q_{2}(n)}{Q_{21}(h(n))}Z_{2}(m,n)\right\} |\mathbf{Z}%
(n)\neq \mathbf{0}\right] =1-\left( 1+\lambda _{2}^{-\alpha _{2}}\right)
^{-1/\alpha _{2}};
\end{equation*}

\item[(3)] For any $t>0$, $\lambda _{2}\geq 0$ and $m=n-th^{\ast }(n)$
\begin{multline*}
\lim_{n\rightarrow \infty }\mathbf{E}\left[ s_{1}^{Z_{1}(m,n)}\exp \left\{
-\lambda _{2}\frac{Q_{2}(n)}{Q_{21}(xh^{\ast }(n))}Z_{2}(m,n)\right\} |%
\mathbf{Z}(n)\neq \mathbf{0}\right]  \\
=1-\left( 1+\left( t^{1-1/\alpha _{1}}(1-s_{1})^{1-\alpha _{1}}+\lambda
_{2}\right) ^{-\alpha _{2}}\right) ^{-1/\alpha _{2}};
\end{multline*}

\item[(4)] For any function $h$ satisfying $1\ll h(n)\ll h^{\ast }(n)$, $%
m=n-h(n)$, and $\lambda _{1},\lambda _{2}\geq 0$
\begin{multline*}
\lim_{n\rightarrow \infty }\mathbf{E}\left[ \exp \left\{ -\lambda _{1}\frac{%
Q_{1}(h^{\ast }(n))}{Q_{1}(h(n))}Z_{1}(m,n)-\lambda _{2}\frac{Q_{2}(n)}{%
Q_{21}(h(n))}Z_{2}(m,n)\right\} \Big|\mathbf{Z}(n)\neq \mathbf{0}\right]  \\
=1-\left( 1+\left( \lambda _{1}^{1-\alpha _{1}}+\lambda _{2}\right)
^{-\alpha _{2}}\right) ^{-1/\alpha _{2}}.
\end{multline*}
\end{enumerate}
\end{theorem}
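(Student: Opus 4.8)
The plan is to reduce all five assertions to the asymptotics of a single scalar quantity and then to analyse a perturbed one-dimensional recursion. First I would record the decomposition identity: fixing $m<n$ and conditioning on $\mathbf{Z}(m)$, the branching property shows that, given $\mathbf{Z}(m)=(z_1,z_2)$, the pair $(Z_1(m,n),Z_2(m,n))$ is the sum of a $\mathrm{Bin}(z_1,Q_1(n-m))$ variable and an independent $\mathrm{Bin}(z_2,Q_{21}(n-m))$ variable, while $\{\mathbf{Z}(n)\neq\mathbf{0}\}=\{Z_1(m,n)+Z_2(m,n)\ge 1\}$. Writing $\Phi_m(u,v)=\mathbf{E}[u^{Z_1(m)}v^{Z_2(m)}]$, one gets
\begin{equation*}
\mathbf{E}\big[s_1^{Z_1(m,n)}s_2^{Z_2(m,n)}\mid\mathbf{Z}(n)\neq\mathbf{0}\big]
=1-\frac{1-\Phi_m\big(1-Q_1(n-m)(1-s_1),\,1-Q_{21}(n-m)(1-s_2)\big)}{Q_{21}(n)},
\end{equation*}
and the Laplace-transform statements in (2)--(4) are this same identity with $s_i$ replaced by $\widetilde s_i=\exp\{-\lambda_i w_i(n)\}$ (so that $1-\widetilde s_i\sim\lambda_i w_i(n)$) for the displayed weights $w_i(n)$. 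Hence all the claims follow once one controls $1-\Phi_m(\sigma_1,\sigma_2)$ for $1-\sigma_i$ of the prescribed small order, together with the input $Q_{21}(n)\sim Q_2(n)$, which holds under \eqref{Q1NeglQ2} (from \cite{Z,VS}; see also Section~\ref{sectionauxresults}).

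The decomposable structure gives $\Phi_k(\sigma_1,\sigma_2)=F_{21}\big(F_1^{\circ(k-1)}(\sigma_1),\Phi_{k-1}(\sigma_1,\sigma_2)\big)$ (with $F_1^{\circ j}$ the $j$-fold iterate), so, setting $x_k:=1-F_1^{\circ k}(\sigma_1)$ and $y_k:=1-\Phi_k(\sigma_1,\sigma_2)$, conditions \eqref{F1}--\eqref{F2} turn the problem into
\begin{align*}
x_k&=x_{k-1}-x_{k-1}^{1+\alpha_1}L_1(x_{k-1}),\\
y_k&=y_{k-1}-y_{k-1}^{1+\alpha_2}L_2(y_{k-1})+\big(A_{21}-\rho(1-x_{k-1},1-y_{k-1})\big)\,x_{k-1},
\end{align*}
with $x_0=Q_1(n-m)(1-s_1)$, $y_0=Q_{21}(n-m)(1-s_2)$, and the quantity sought is $\lim y_m/Q_{21}(n)$. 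The sequence $(x_k)$ is a pure one-type recursion of index $\alpha_1$; from the one-type theory of \cite{Sl68} and \eqref{ReprQ} one has, uniformly for $x_0\downarrow0$, $x_k^{-\alpha_1}=x_0^{-\alpha_1}+Q_1(k)^{-\alpha_1}(1+o(1))$. Since $\alpha_1<1$, the catalytic input $I_j:=\sum_{k=1}^{j}(A_{21}-\rho(\cdots))x_{k-1}$ converges at its upper end, and a Karamata change of variables (the sum being dominated by $k$ of order $(n-m)(1-s_1)^{-\alpha_1}$) gives $I_\infty\sim\frac{\alpha_1A_{21}}{1-\alpha_1}(n-m)Q_1(n-m)(1-s_1)^{1-\alpha_1}$, with $I_\infty-I_j=o(I_\infty)$ once $j$ exceeds that order. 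Inserting the defining relation \eqref{Mu1} for $h^{\ast}$ then yields: $I_\infty\sim t^{1-1/\alpha_1}(1-s_1)^{1-\alpha_1}Q_2(n)$ if $n-m=th^{\ast}(n)$; $I_\infty=o(Q_2(n))$ if $h^{\ast}(n)\ll n-m\ll n$ (because $((n-m)/h^{\ast}(n))^{1-1/\alpha_1}\to0$); $I_\infty\sim\lambda_1^{1-\alpha_1}Q_2(n)$ if $n-m=h(n)\ll h^{\ast}(n)$ and $1-s_1$ is replaced by $\lambda_1Q_1(h^{\ast}(n))/Q_1(h(n))$; and $I_m\lesssim mQ_1(n)\ll nQ_1(n)=o(Q_2(n))$ if $m\ll n$.

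The decisive step is to read off $y_m$. Because $x_0$ is small and $Q_1(k)$ decays like $k^{-1/\alpha_1}$ (by \eqref{AsQ}) with $1/\alpha_1>1$, the catalytic input is front-loaded: up to $o(Q_2(n))$ it is all injected before a time of order $h^{\ast}(n)\ll n$, and on that scale the $y$-part of the recursion is essentially frozen at the level $y_0$ (the cumulative decrement $\sum_{k\le Kh^{\ast}(n)}y_{k-1}^{1+\alpha_2}L_2(y_{k-1})$ being $\lesssim(Kh^{\ast}(n)/n)Q_2(n)=o(Q_2(n))$ by \eqref{ReprQ}). Hence after this window $y$ equals $y_0+I_\infty$ up to $o(Q_2(n))$, and then $x_k$ is negligible and $y$ follows the pure one-type recursion of index $\alpha_2$ for $\sim m$ steps; the semigroup form of \eqref{monotype}, $y_m^{-\alpha_2}=(y_0+I_\infty)^{-\alpha_2}+Q_2(m)^{-\alpha_2}(1+o(1))$, gives $y_m$. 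Substituting $y_0=Q_{21}(n-m)(1-s_2)$ (which is $\sim(1-a)^{-1/\alpha_2}(1-s_2)Q_2(n)$ in (1) and $\sim\lambda_2Q_2(n)$ in (2)--(4)) and the value of $I_\infty$, and using $Q_{21}(n)\sim Q_2(n)$ with the regular variation of $Q_2$, reproduces \eqref{Zub11}, \eqref{Zub12} and the formulas of (2), (3), (4); part (0) is the degenerate case $m\ll n$, where $y_m\sim y_0\sim(1-s_2)Q_2(n)$ and $I_m=o(Q_2(n))$, so the limit is $s_2$.

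The step I expect to be the main obstacle is the uniform control of the slowly varying functions $L_1,L_2,l_1,l_2$ throughout the coupled recursion: making the front-loading quantitative so that the various $o(Q_2(n))$ errors are uniform within each regime, and checking that the Karamata evaluation of $I_\infty$ produces exactly the constant which \eqref{Mu1} converts into $Q_2(n)$ --- in particular the power $(1-s_1)^{1-\alpha_1}$. Controlling the remainder $\rho$ (which is $o(1)$ but must be shown not to spoil the asymptotics of $I_\infty$) and passing from the ``frozen'' estimate to the exact one-type semigroup identity are the other delicate points; these are of the kind carried out in \cite{Vat14,VS} and rest on the lemmas of Section~\ref{sectionauxresults}.
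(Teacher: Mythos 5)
Your proposal is correct in substance and reproduces all five limits, but it takes a genuinely different route from the paper on the two hardest items. Both arguments start from the same identity — your formula for $\mathbf{E}[s_1^{Z_1(m,n)}s_2^{Z_2(m,n)}\mid \mathbf{Z}(n)\neq\mathbf{0}]$ is exactly \eqref{expressreduced} — and both dispose of points (0)--(2) by showing the type-$1$ contribution is $o(Q_{21}(n))$ and then invoking the one-type asymptotics \eqref{monotype} together with $Q_{21}(n)\sim Q_2(n)$ from \eqref{AsQ1Q22}; your bound $I_m\lesssim mQ_1(n)$ on the catalytic input is the analytic counterpart of the paper's probabilistic Lemma \ref{L_range} (which bounds $\mathbf{E}_2[1-s^{Z_1(m,n)}]$ by $A_{21}\sum_{k\geq n-m}Q_1(k)$) and gives the same order. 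The divergence is in points (3) and (4): the paper does \emph{not} analyse the coupled recursion there, but simply substitutes the appropriate arguments into the already-known joint limit theorem \eqref{BadStatement} for the unreduced process (Theorem \ref{Zub1}(1), imported from Zubkov \cite{Z} and Vatutin--Sagitov \cite{VS}), whereas you re-derive that input from scratch via the front-loaded catalytic sum $I_\infty\sim\frac{\alpha_1A_{21}}{1-\alpha_1}(n-m)Q_1(n-m)(1-s_1)^{1-\alpha_1}$ converted through \eqref{Mu1}, followed by the frozen-then-pure-$\alpha_2$ evolution. Your window analysis is essentially the paper's Lemma \ref{L_zub1} (compare your frozen-$y$ estimate with the derivation of \eqref{C_zub2}, which uses the same $\sum_k y_k^{1+\alpha_2}L_2(y_k)\lesssim(h^*(n)/n)Q_2(n)$ bound via \eqref{ReprQ}) pushed to an infinite window and then composed with the semigroup identity behind Lemma \ref{L_trick}; your constants and exponents all check out against the paper's formulas. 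What your route buys is self-containedness — you would not need to cite \eqref{BadStatement} — at the cost of having to make rigorous precisely the sandwich and uniformity arguments you flag at the end (tail of $I_\infty$ beyond the window, comparison of the perturbed recursion with the pure one, control of the slowly varying factors); those are exactly the technical content of \cite{Z,VS} that the paper chooses to quote rather than reprove.
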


We see that under the conditions of Theorem \ref{T_ZubQ1} the reduced
process essentially consists of one particle of the second type at the
initial stage, it consists of many second type particles but still no
particles of the first type at the intermediate stage, and particles of the
first type appear in the process only at time $m=n-th^{\ast }(n)$, $t>0$. As a consequence, the coexistence of both types in the
reduced process is possible only within a relatively small (in comparison
with $n$) time interval of order $h^{\ast }(n)$, located at the end of
the time frame under consideration,~$[0,n].$ Thus, the evolution of this
reduced process is mainly supported by particles of the initial type.\newline

Let us now deal with Condition \eqref{VaSag0}. To formulate the desired
result we need to introduce a function $\phi =\phi (\lambda _{1},\lambda
_{2}),\lambda _{1},\lambda _{2}\geq 0,$ which solves the partial
differential equation
\begin{equation}
(1+\alpha _{2})\lambda _{1}\frac{\partial \phi }{\partial \lambda _{1}}%
+\lambda _{2}\frac{\partial \phi }{\partial \lambda _{2}}=\phi -\phi
^{1+\alpha _{2}}+\alpha _{2}A_{21}\lambda _{1}  \label{defphi}
\end{equation}%
with initial conditions
\begin{equation*}
\phi \left( 0,0\right) =0,\quad \frac{\partial \phi \left( \lambda
_{1},\lambda _{2}\right) }{\partial \lambda _{1}}\left\vert _{\lambda
_{1}=\lambda _{2}=0}\right. =A_{21},\quad \frac{\partial \phi \left( \lambda
_{1},\lambda _{2}\right) }{\partial \lambda _{2}}\left\vert _{\lambda
_{1}=\lambda _{2}=0}\right. =1.
\end{equation*}
The problem of existence and uniqueness of a solution for Equation %
\eqref{defphi} has been solved in \cite{VS} and \cite{Sag87}. Note that if $%
\alpha _{1}=\alpha _{2}=1$, the solution of \eqref{defphi} has an explicit
form:
\begin{equation*}
\phi \left( \lambda _{1},\lambda _{2}\right) =\sqrt{A_{21}\lambda _{1}}\,%
\frac{\lambda _{2}+\sqrt{A_{21}\lambda _{1}}\tanh \sqrt{A_{21}\lambda _{1}}}{%
\lambda _{2}\tanh \sqrt{A_{21}\lambda _{1}}+\sqrt{A_{21}\lambda _{1}}}.
\end{equation*}
Let $g^{\ast }(n)$ be an integer-valued function satisfying
\begin{equation}
Q_{2}(g^{\ast }(n))\sim Q_{21}(n).  \label{defgast}
\end{equation}%
Using properties of regularly varying functions and the asymptotic
representation for $Q_{21}(n)$ given later on in \eqref{SmallQ2} one may
check that
\begin{equation}
g^{\ast }(n)=n^{\alpha _{2}/(\alpha _{1}(1+\alpha _{2}))}l_{g^\ast}(n),
\label{defgast2}
\end{equation}%
where $l_{g^\ast}(n)$ is a function slowly varying at infinity, 
and that $g^{\ast }(n)\ll n$.

\begin{theorem}
\label{T_zubQ3} Let Conditions (\ref{F1})-(\ref{defA21}) and (\ref{VaSag0})
be satisfied and $(s_{1},s_{2})$ be in $(0,1)^2$.

\begin{enumerate}
\item[(0)] If $1\leq m\ll g^{\ast }(n),$ then
\begin{equation*}
\lim_{n\rightarrow \infty }\mathbf{E}\left[
s_{1}^{Z_{1}(m,n)}s_{2}^{Z_{2}(m,n)}|\mathbf{Z}(n)\neq \mathbf{0}\right]
=s_{2};
\end{equation*}

\item[(1)] For any $t>0$
\begin{equation*}
\lim_{n\rightarrow \infty }\mathbf{E}\left[ s_{1}^{Z_{1}(tg^{\ast
}(n),n)}s_{2}^{Z_{2}(tg^{\ast }(n),n)}|\mathbf{Z}(n)\neq \mathbf{0}\right]
=1-t^{-1/\alpha _{2}}\phi \Big((1-s_{1})\frac{t^{1/\alpha _{2}+1}}{\alpha
_{2}A_{21}},(1-s_{2})t^{1/\alpha _{2}}\Big);
\end{equation*}

\item[(2)] If $g^{\ast }(n)\ll m\ll n$, then
\begin{equation*}
\lim_{n\rightarrow \infty }\mathbf{E}\left[
s_{1}^{Z_{1}(m,n)}s_{2}^{Z_{2}(m,n)}|\mathbf{Z}(n)\neq \mathbf{0}\right]
=1-(1-s_{1})^{1/(1+\alpha _{2})};
\end{equation*}

\item[(3)] For any $a\in (0,1)$
\begin{equation}
\lim_{n\rightarrow \infty }\mathbf{E}\left[
s_{1}^{Z_{1}(an,n)}s_{2}^{Z_{2}(an,n)}|\mathbf{Z}(n)\neq \mathbf{0}\right]
=1-\left( a+(1-a)(1-s_{1})^{-\alpha _{1}}\right) ^{-1/(\alpha _{1}(1+\alpha
_{2}))};  \label{T_zubQ3point4}
\end{equation}

\item[(4)] If $m=n-h(n)$ for some $1\ll h(n)\ll n$, then for $%
\mathbf{\lambda }_{1}\geq 0$
\begin{equation*}
\lim_{n\rightarrow \infty }\mathbf{E}\left[ \exp \left\{ -\lambda _{1}\frac{%
Q_{1}(n)}{Q_{1}(h(n))}Z_{1}(m,n)\right\} s_{2}^{Z_{2}(m,n)}|\mathbf{Z}%
(n)\neq \mathbf{0}\right] =1-\left( 1+\lambda _{1}^{-\alpha _{1}}\right)
^{-1/\left( \alpha _{1}(1+\alpha _{2})\right) }.
\end{equation*}
\end{enumerate}
\end{theorem}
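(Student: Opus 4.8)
The plan is to reduce all five assertions to the asymptotics of one functional of the iterated offspring generating functions. Write $F_{1,m}(s_1)=\mathbf{E}_1[s_1^{Z_1(m)}]$ and $F_{21,m}(s_1,s_2)=\mathbf{E}_2[s_1^{Z_1(m)}s_2^{Z_2(m)}]$ for the $m$-fold iterates (so that $F_{21,m}(s_1,s_2)=F_{21,m-1}(F_1(s_1),F_{21}(s_1,s_2))$). Conditioning on $\mathbf{Z}(m)$ and using the branching property: each of the $Z_1(m)$ type-$1$ particles alive at time $m$ has a descendant at time $n$ with probability $Q_1(n-m)$, independently, and each of the $Z_2(m)$ type-$2$ particles with probability $Q_{21}(n-m)$; moreover $\{\mathbf{Z}(n)\neq\mathbf{0}\}=\{Z_1(m,n)+Z_2(m,n)\geq 1\}$, and on its complement $s_1^{Z_1(m,n)}s_2^{Z_2(m,n)}=1$. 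Therefore
\[
\mathbf{E}\!\left[s_1^{Z_1(m,n)}s_2^{Z_2(m,n)}\,\big|\,\mathbf{Z}(n)\neq\mathbf{0}\right]
=1-\frac{1-F_{21,m}\!\bigl(1-(1-s_1)Q_1(n-m),\,1-(1-s_2)Q_{21}(n-m)\bigr)}{Q_{21}(n)},
\]
and each part of the theorem becomes a question about $1-F_{21,m}(1-x_1,1-x_2)$ as $m=m(n)\to\infty$, $x_i=x_i(n)\downarrow 0$, for the $m$ and $x_i$ dictated by the five time regimes.

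The analytic engine, to be provided by the auxiliary results of Section~\ref{sectionauxresults} (combining the above recursion, the expansions \eqref{F1}--\eqref{F2}, and the theory of \eqref{defphi} from \cite{VS,Sag87}), is an estimate of the shape: there is a normalizing sequence $d(m)$ such that, uniformly on compact subsets of $[0,\infty)^2$,
\[
1-F_{21,m}\!\bigl(1-\lambda_1\,d(m),\;1-\lambda_2\,Q_2(m)\bigr)\ \sim\ Q_2(m)\,\phi(\lambda_1,\lambda_2),\qquad m\to\infty,
\]
where $\phi$ solves \eqref{defphi} and $d(m)$ is pinned down, up to asymptotic equivalence via \eqref{SmallQ2}, by requiring that the first argument of $\phi$ come out as $(1-s_1)t^{1/\alpha_2+1}/(\alpha_2A_{21})$ in part~(1). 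Two limiting features of $\phi$ will also be used: from its initial conditions $\phi(\lambda_1,\lambda_2)\sim\lambda_2$ as $(\lambda_1,\lambda_2)\to(0,0)$; and, on dropping the $\lambda_2$-derivative term in \eqref{defphi}, $\phi(\lambda_1,\lambda_2)$ behaves like a constant times $\lambda_1^{1/(1+\alpha_2)}$ as $\lambda_2\to\infty$ (the type-$2$ coordinate equilibrates).

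Granting this, parts (0), (1), (2) and (4) follow by substitution and regular-variation bookkeeping for $Q_1$, $Q_2$, $Q_{21}$ (of indices $-1/\alpha_1$, $-1/\alpha_2$, and, by \eqref{SmallQ2}, $-1/(\alpha_1(1+\alpha_2))$). In (1), $m=tg^\ast(n)$: \eqref{defgast} and regular variation give $Q_2(m)\sim t^{-1/\alpha_2}Q_2(g^\ast(n))\sim t^{-1/\alpha_2}Q_{21}(n)$, while $x_i=(1-s_i)Q_i(n-m)\sim(1-s_i)Q_i(n)$ because $m\ll n$; reading $\lambda_1,\lambda_2$ off the displayed estimate produces exactly $1-t^{-1/\alpha_2}\phi(\cdots)$. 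In (0), $m\ll g^\ast(n)$ forces $Q_2(m)/Q_{21}(n)\to\infty$, hence $\lambda_1,\lambda_2\to 0$, $\phi(\lambda_1,\lambda_2)\sim\lambda_2$, so $1-F_{21,m}(\cdots)\sim(1-s_2)Q_{21}(n)$ and the limit is $s_2$. In (2), $g^\ast(n)\ll m\ll n$ sends $\lambda_2\to\infty$ with the first argument still of order $Q_1(n)$; the $\lambda_1^{1/(1+\alpha_2)}$ behavior of $\phi$ collapses the ratio to $1-(1-s_1)^{1/(1+\alpha_2)}$. In (4), $m=n-h(n)\sim n$ and $Q_1(n-m)=Q_1(h(n))$: the normalization $Q_1(n)/Q_1(h(n))$ is precisely what makes the relevant $\lambda_1$ converge, $Z_2(m,n)$ is negligible ($\lambda_2\to\infty$ again), and one gets $1-(1+\lambda_1^{-\alpha_1})^{-1/(\alpha_1(1+\alpha_2))}$, the monotype exponent $1/\alpha_1$ of \eqref{monotype} being corrected by the factor $1/(1+\alpha_2)$ coming from the type-$2$ tree through which the type-$1$ cloud is seeded.

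Part~(3), $m=an$, is where I expect the real work. Here $n-m=(1-a)n$ is comparable to $n$, so one may no longer replace $Q_1(n-m)$ by $Q_1(n)$, and the single $Q_2(m)$-scale estimate above no longer captures $F_{21,an}$ directly; instead one has to couple the decomposition of the first paragraph with the monotype estimate \eqref{monotype} for the type-$1$ subprocess run over the macroscopic time $an$ and with the type-$2$ equilibrium ($\lambda_2\to\infty$) asymptotics of $\phi$. The ratio $Q_1((1-a)n)/Q_1(n)\to(1-a)^{-1/\alpha_1}$ produces the combination $a+(1-a)(1-s_1)^{-\alpha_1}$, and, as in (4), the composite exponent $1/(\alpha_1(1+\alpha_2))$ reflects that the type-$1$ population present at time $an$ is not a single particle but a $(1+\alpha_2)$-stable cloud generated by the type-$2$ tree. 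The main obstacle is thus twofold: establishing the auxiliary estimate of the second paragraph with the correct $d(m)$ and enough uniformity, and, in parts (2)--(4) and especially (3), controlling the boundary behavior of $\phi$ as $\lambda_2\to\infty$ together with the interplay between the iteration depth $m$ and the $n$-dependent arguments — the macroscopic regime being exactly where these two difficulties meet.
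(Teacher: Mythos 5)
Your reduction to \eqref{expressreduced} and your identification of the ``engine'' with the Vatutin--Sagitov asymptotic \eqref{T_sagcom} (i.e.\ $1-F_{21,m}(1-\lambda_1 Q_2(m)/m,\,1-\lambda_2Q_2(m))\sim Q_2(m)\,\phi(\lambda_1,\lambda_2)$, which is exactly Theorem~\ref{Zub1}(2), imported from \cite{VS,Sag87}) is the same route the paper takes for points (0) and (1), and those two parts of your argument are essentially correct. The genuine gap is in points (2)--(4). There your scheme requires the behavior of $\phi(\lambda_1,\lambda_2)$ as $\lambda_2\to\infty$ \emph{jointly} with $\lambda_1\to\infty$ (in regime (2) both normalized arguments diverge, since $Q_{21}(n)/Q_2(m)\to\infty$ and the first argument corresponds to $t\to\infty$ in the scaling of point (1)), together with validity of the engine's asymptotic well outside compact sets of $(\lambda_1,\lambda_2)$ --- neither of which you establish, and the claimed $\phi\asymp\lambda_1^{1/(1+\alpha_2)}$ boundary law is only a heuristic obtained by ``dropping the $\lambda_2$-derivative term.'' For point (3) you concede the single-scale estimate fails and sketch a coupling with \eqref{monotype}, but no actual argument is given; as written, (2)--(4) do not close.

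For comparison, the paper disposes of (3) and (4) by an elementary trick that avoids $\phi$ entirely: since $Z_2(m,n)\to 0$ in these regimes the value of $s_2$ is irrelevant, so one \emph{chooses} $s_2$ so that $\mathbf{1}-(\mathbf{1}-\mathbf{s})\otimes\mathbf{Q}_{21}(n-m)$ coincides asymptotically with $\mathbf{1}-\mathbf{Q}_{21}(k)$ for $k=(1-a)n/(1-s_1)^{\alpha_1}$ (using regular variation of $Q_1$ to absorb $1-s_1$ into the time argument); the semigroup identity $Q_{21}(m;\mathbf{1}-\mathbf{Q}_{21}(k))=Q_{21}(m+k)$ and the regular variation of $Q_{21}$ with index $-1/(\alpha_1(1+\alpha_2))$ from \eqref{SmallQ2} then give \eqref{T_zubQ3point4} in two lines. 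Point (2) is then obtained by sandwiching: the $t\to\infty$ limit of point (1) (computed via the explicit ODE for $\phi(0,x)/x$ and a monotonicity--boundedness argument for $\phi(x,0)/x^{1/(1+\alpha_2)}$, whose limit is pinned down probabilistically from $Z_1(m,n)+Z_2(m,n)\geq 1$) agrees with the $a\downarrow 0$ limit of point (3), and monotonicity of $Z_1(m,n)$ in $m$ fills the whole intermediate range. If you want to salvage your plan, you should either prove the joint boundary asymptotics of $\phi$ with the requisite uniformity, or switch to the semigroup/monotonicity argument for (2)--(4).
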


Once again, coexistence of both types in the reduced process is possible
only within a relatively small time interval (here of order $g^{\ast }(n)$). 
But unlike the case of Theorem \ref{T_ZubQ1} this interval is located at
the beginning of the population evolution and the reduced
process is essentially composed of type $1$ individuals.\newline

The next theorem deals with Condition \eqref{VaSag}. Let $b\in
\left( 1,\infty \right) $ be the unique positive solution of
\begin{equation}
x^{1+\alpha _{2}}-x=\sigma \alpha _{2}A_{21},  \label{defb}
\end{equation}%
and $\psi =\psi \left( \lambda _{1},\lambda _{2}\right) ,\lambda
_{1},\lambda _{2}\geq 0,$ the unique solution of the quasi-linear partial
differential equation
\begin{equation}
\left( 1+\alpha _{2}\right) \lambda _{1}\frac{\partial \psi }{\partial
\lambda _{1}}+\lambda _{2}\frac{\partial \psi }{\partial \lambda _{2}}=\psi
-b^{\alpha _{2}}\psi ^{1+\alpha _{2}}+\alpha _{2}A_{21}\lambda _{1}\left(
1+\left( \frac{b}{\sigma }\lambda _{1}\right) ^{\alpha _{1}}\right) ^{-\frac{%
1}{\alpha _{1}}}  \label{Partial2}
\end{equation}%
with initial conditions
\begin{equation}
\psi \left( 0,0\right) =0,\quad \frac{\partial \psi \left( \lambda
_{1},\lambda _{2}\right) }{\partial \lambda _{1}}\left\vert _{\lambda
_{1}=\lambda _{2}=0}\right. =A_{21},\quad \frac{\partial \psi \left( \lambda
_{1},\lambda _{2}\right) }{\partial \lambda _{2}}\left\vert _{\lambda
_{1}=\lambda _{2}=0}\right. =1.  \label{Inpsi}
\end{equation}

Then we have the following asymptotics for the intermediate case %
\eqref{VaSag}:

\begin{theorem}
\label{T_VatSag} Let Conditions (\ref{F1})-(\ref{defA21}) and (\ref{VaSag})
be satisfied, and  $(s_{1},s_{2})$ be in $(0,1)^{2}$.

\begin{enumerate}
\item[(0)] If $1\leq m\ll n$, then
\begin{equation}
\lim_{n\rightarrow \infty }\mathbf{E}\left[
s_{1}^{Z_{1}(m,n)}s_{2}^{Z_{2}(m,n)}|\mathbf{Z}(n)\neq \mathbf{0}\right]
=s_{2};  \label{Negl2}
\end{equation}

\item[(1)] For any $a\in (0,1)$,
\begin{multline*}
\lim_{n\rightarrow \infty }\mathbf{E}\left[
s_{1}^{Z_{1}(an,n)}s_{2}^{Z_{2}(an,n)}\Big|\mathbf{Z}(n)\neq \mathbf{0}%
\right] \\
=1-a^{-1/\alpha _{2}}\psi \Big((1-s_{1})\frac{\sigma }{b}\Big(%
\frac{a}{1-a}\Big)^{1/\alpha _{1}},(1-s_{2})\Big(\frac{a}{1-a}\Big)%
^{1/\alpha _{2}}\Big);
\end{multline*}

\item[(2)] If $m=n-h(n)$, where $1\ll h(n) \ll n $, then for $\lambda_{1},\lambda _{2}\geq 0$
\begin{equation*}
\lim_{n\rightarrow \infty }\mathbf{E}\left[ \exp \left\{ -\frac{\lambda _{1}
Q_{21}(n)}{nQ_{1}(h(n))}Z_{1}(m,n)-\frac{\lambda _{2}Q_{2}(n)}{Q_{2}(h(n))}%
Z_{2}(m,n)\right\} \Big|\mathbf{Z}(n)\neq \mathbf{0}\right] =1-\psi \left(
\lambda _{1},\lambda _{2}\right) .
\end{equation*}
\end{enumerate}
\end{theorem}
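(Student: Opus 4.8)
The plan is to reduce all three parts to the asymptotics of a single scalar functional iterate and then identify its limit with $\psi$ by the method of characteristics applied to \eqref{Partial2}. By the branching property, peeling off the first $m$ generations shows that for $(s_1,s_2)\in[0,1]^2$ and $g:=n-m$ one has $\mathbf{E}_2[s_1^{Z_1(m,n)}s_2^{Z_2(m,n)}]=1-Y_m$, where $X_0=Q_1(g)(1-s_1)$, $Y_0=Q_{21}(g)(1-s_2)$ and, for $1\le j\le m$,
\[
X_j=1-F_1(1-X_{j-1}),\qquad Y_j=1-F_{21}(1-X_{j-1},1-Y_{j-1}).
\]
Since the reduced process is empty on $\{\mathbf{Z}(n)=\mathbf{0}\}$, the conditional generating function in Theorem~\ref{T_VatSag} equals $1-Y_m/Q_{21}(n)$ (the Laplace transforms in (1)--(2) are the case $s_i=e^{-\mu_i}$). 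Inserting \eqref{F1}--\eqref{F2} turns the recursions into $X_j=X_{j-1}-X_{j-1}^{1+\alpha_1}L_1(X_{j-1})$ and $Y_j=Y_{j-1}-Y_{j-1}^{1+\alpha_2}L_2(Y_{j-1})+\big(A_{21}-\rho(1-X_{j-1},1-Y_{j-1})\big)X_{j-1}$: the type-$1$ chain is autonomous and injects a term $\asymp A_{21}X_{j-1}$ into the type-$2$ chain at every step. Two preliminary facts enter. In regime \eqref{VaSag} the relation $nQ_1(n)\sim\sigma Q_2(n)$ together with \eqref{AsQ} forces $\alpha_1=\alpha_2/(1+\alpha_2)$ (and $l_1\sim\sigma l_2$); and running the $Y$-chain with $X_{j-1}=Q_1(j-1)$ from $Y_0=Q_{21}(0)=1$ and matching with \eqref{defb} yields $Q_{21}(n)\sim bQ_2(n)$, which is the content of \eqref{SmallQ2} in this case. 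I take both for granted.

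Next I would rescale. For part (1) put $m=an$, $g=(1-a)n$; for part (2) fix $h$ with $1\ll h\ll n$, put $m=n-h$, $g=h$, $s_i=e^{-\mu_i}$ with $\mu_1=\lambda_1Q_{21}(n)/(nQ_1(h))$, $\mu_2=\lambda_2Q_2(n)/Q_2(h)$. The normalisations are so arranged that $X_0/Q_1(n)\to\kappa_1$ and $Y_0/Q_{21}(n)\to\kappa_2$ with $\kappa_1,\kappa_2\ge0$ explicit, and a monotonicity argument keeps $X_j\asymp Q_1(n)$, $Y_j\asymp Q_{21}(n)$ uniformly in $j\le m$. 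By the monotype theory (cf. \eqref{AsQ}--\eqref{monotype}) $X_j=(X_0^{-\alpha_1}+Q_1(j)^{-\alpha_1})^{-1/\alpha_1}(1+o(1))$, so with $\widehat X(\tau):=X_{[\tau m]}/Q_1(n)$ and regular variation plus \eqref{ReprQ} one gets $\widehat X(\tau)^{-\alpha_1}\to\kappa_1^{-\alpha_1}+(m/n)\tau$ uniformly on $[0,1]$. Feeding this into the $Y$-recursion, writing $\widehat Y(\tau):=Y_{[\tau m]}/Q_{21}(n)$ and using $Q_{21}(n)\sim bQ_2(n)$, $\alpha_2nQ_2(n)^{\alpha_2}L_2(Q_2(n))\to1$, $nQ_1(n)/Q_{21}(n)\to\sigma/b$, Potter's bounds to control $L_1,L_2$ under the rescaling, and $\rho\to0$, I would prove that $\widehat Y$ converges uniformly on $[0,1]$ to the solution of
\[
\widehat Y'(\tau)=-\frac{(m/n)\,b^{\alpha_2}}{\alpha_2}\,\widehat Y(\tau)^{1+\alpha_2}+\frac{(m/n)\,A_{21}\,\sigma}{b}\,\widehat X(\tau),\qquad\widehat Y(0)=\kappa_2,
\]
the quantity sought being $1-\widehat Y(1)$. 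In part (0), $1\le m\ll n$ means $m/n\to0$, and the uniform bounds give $Y_m-Y_0=o(Q_{21}(n))$ directly, so $\widehat Y(1)\to\kappa_2=1-s_2$, which is \eqref{Negl2}.

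For parts (1) and (2) I would identify $\widehat Y(1)$ with $\psi$. The operator on the left of \eqref{Partial2} generates the flow $(\lambda_1,\lambda_2)\mapsto(\lambda_1e^{(1+\alpha_2)t},\lambda_2e^{t})$, so along the characteristic of \eqref{Partial2} issuing from the origin at $t=-\infty$ the function $p(t)=\psi(\lambda_1e^{(1+\alpha_2)t},\lambda_2e^{t})$ solves, for $t\le0$,
\[
\dot p=p-b^{\alpha_2}p^{1+\alpha_2}+\alpha_2A_{21}\lambda_1e^{(1+\alpha_2)t}\Big(1+\big(\tfrac{b}{\sigma}\lambda_1e^{(1+\alpha_2)t}\big)^{\alpha_1}\Big)^{-1/\alpha_1},\qquad p(-\infty)=0,
\]
with $p(0)=\psi(\lambda_1,\lambda_2)$, and \eqref{Inpsi} fixes the solution through $p(t)\sim\lambda_2e^{t}+A_{21}\lambda_1e^{(1+\alpha_2)t}$ as $t\to-\infty$. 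A direct computation then shows that the substitution $p(t)=c\,e^{t}\,\widehat Y(e^{\alpha_2t})$, with $c=1$ for part (2) and $c=a^{1/\alpha_2}$ for part (1), turns the ODE for $\widehat Y$ (with $\widehat X$ inserted) into precisely this characteristic equation — the identity $\alpha_1(1+\alpha_2)=\alpha_2$ being exactly what makes the type-$1$ feedback term and the exponential factors match — the relevant characteristic having parameters $(\lambda_1,\lambda_2)$ in part (2) and $\big((1-s_1)\tfrac{\sigma}{b}(\tfrac{a}{1-a})^{1/\alpha_1},(1-s_2)(\tfrac{a}{1-a})^{1/\alpha_2}\big)$ in part (1), with its $t\to-\infty$ asymptotics matching $\widehat Y(0)=\kappa_2$. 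Hence $\widehat Y(1)=c^{-1}p(0)=c^{-1}\psi(\cdot,\cdot)$; equivalently, the function $(\lambda_1,\lambda_2)\mapsto\widehat Y(1)$ restricted to each characteristic solves the characteristic ODE with the right endpoint data, hence satisfies \eqref{Partial2}--\eqref{Inpsi}, so by the uniqueness established in \cite{VS,Sag87} it equals $\psi$. This gives (1) (the factor $a^{-1/\alpha_2}$ being $c^{-1}$) and (2); reading these Laplace-transform limits through the continuity theorem yields the stated conditional laws.

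The step requiring most care is the passage from the discrete recursions to the limiting ODE: one must show that $L_1,L_2$ and the slowly varying parts of $Q_i,Q_{21}$ behave consistently under the rescalings $X\mapsto\widehat X\,Q_1(n)$, $Y\mapsto\widehat Y\,Q_{21}(n)$, uniformly in $\tau\in[0,1]$ and in the relevant range of the transform variables, which is handled by Potter-type bounds and the uniform convergence theorem for regularly varying functions; and one must secure the two-sided bounds $c_1Q_{21}(n)\le Y_{[\tau m]}\le c_2Q_{21}(n)$ that keep the trajectory away from $0$, so that $\rho$ and the Euler-discretisation errors are uniformly negligible. The monotype estimates for $\widehat X$ and the asymptotics $Q_{21}(n)\sim bQ_2(n)$ belong to the preliminary material of Section~\ref{sectionauxresults}; granted these, the argument runs in parallel with the treatment of case \eqref{VaSag0} behind Theorem~\ref{T_zubQ3}, the only structural difference being the presence of the saturating type-$1$ feedback.
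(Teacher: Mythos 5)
Your proposal is correct in its architecture and reaches the right answer in all three parts, but it takes a genuinely different — and much heavier — route than the paper. The paper's proof of points (1) and (2) is essentially two lines: it writes the conditional generating function via \eqref{expressreduced} as $Q_{21}(m;\mathbf{1}-(\mathbf{1}-\mathbf{s})\otimes\mathbf{Q}_{21}(n-m))/Q_{21}(n)$, and then invokes \eqref{equivvatsag} (the Vatutin--Sagitov limit theorem, imported as Theorem \ref{Zub1}(3), not proved in the paper) at time $m=an$, together with $Q_1(n)\sim\sigma Q_{21}(n)/(bn)$ and $Q_{21}(an)/Q_{21}(n)\to a^{-1/\alpha_2}$; point (0) is a direct first-moment estimate via \eqref{Neg1}, \eqref{surviv} and Lemma \ref{L_trick}, which matches your argument for that part. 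What you propose instead is to re-derive the content of \eqref{equivvatsag} from scratch: set up the recursion for $Q_1(j;\cdot)$ and $Q_{21}(j;\cdot)$ (your $X_j,Y_j$, which is indeed the same identity the paper uses inside Lemma \ref{L_zub1}), pass to a limiting ODE by an Euler scheme, and identify the limit with $\psi$ by the method of characteristics for \eqref{Partial2}. I checked your characteristic computation: the substitution $p(t)=c\,e^t\widehat Y(e^{\alpha_2 t})$ with $c=a^{1/\alpha_2}$ does reproduce \eqref{Partial2} along the flow $(\lambda_1,\lambda_2)\mapsto(\lambda_1e^{(1+\alpha_2)t},\lambda_2e^t)$ with exactly the arguments stated in part (1), the identity $\alpha_1(1+\alpha_2)=\alpha_2$ being used where you say it is, and the $t\to-\infty$ matching via \eqref{Inpsi} pins down the right solution. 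The trade-off: your route is self-contained modulo the uniqueness theory for \eqref{Partial2} (which you correctly source to \cite{VS,Sag87}), but the discrete-to-continuous passage — uniform control of $L_1,L_2$ under the rescaling, the two-sided bounds $Y_j\asymp Q_{21}(n)$, and the replacement of $(X_0^{-\alpha_1}+Q_1(j)^{-\alpha_1})^{-1/\alpha_1}$ by its rigorous version $Q_1(j+j_0)$ with $Q_1(j_0)\sim X_0$ — is precisely the technical core of \cite{VS} that the paper deliberately outsources, and you leave it as an acknowledged sketch. Two small slips: the asymptotics $Q_{21}(n)\sim bQ_2(n)$ is \eqref{surviv}, not \eqref{SmallQ2} (which concerns regime \eqref{VaSag0}); and in part (2) one should note explicitly that $Q_2(n)/Q_2(h)\sim Q_{21}(n)/Q_{21}(h)$ by \eqref{surviv}, so that your $\kappa_2=\lambda_2$ is consistent with the normalization $\lambda_2Q_2(n)/Q_2(h)$ appearing in the statement. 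Neither affects correctness.
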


In this last case, both types of particles coexist during an interval of
order $n$.

As we will see in Section \ref{sectionmicro}, Theorems \ref{T_ZubQ1}-\ref{T_VatSag} will be elements of the proofs of more general
statements dealing with conditional functional limit theorems for reduced
branching processes.

\section{Global description of the limit processes}

\label{sectionmicro}

In this section, we study in detail the dynamics of the reduced process, in
particular, the transitions between monotype and two-type phases. 
Let $x^{\left[ k\right] }=x(x-1)\cdot \cdot \cdot (x-k+1)$.
In Definitions \ref{defprocessX}-\ref{defprocessG}, we specify five auxiliary continuous time Markov
processes needed to
describe the transitions.

\begin{definition}
\label{defprocessX} We denote by $\{\mathbf{\nu }=(\nu _{1},\nu
_{2}),(g_{1}(s_{1}),\mu _{1}),(g_{21}(s_{1},s_{2}),\mu _{2})\}$ a
homogeneous Markov branching process initiated at time $t=0$ by $\nu _{1}$
particles of type $1$ and  $\nu _{2}$ particles of type 
$2$. At rate $\mu _{1}$ ${(}${resp. $\mu _{2}$}${)}$ a
particle of type $1$ ${(}${resp. $2$}${)}$ dies and produces children in
accordance with the probability generating function $g_{1}$ ${(}${resp. $%
g_{21}$}${)}$. The newborn particles behave as their parents and
independently of each other and of the past. Notice that when type $1$ ${(}$%
{resp. $2$}${)}$ particles do not evolve (no death, no offspring), $%
g_{1}(s_{1})=s_{1}$ ${(}${resp. $g_{21}(s_{1},s_{2})=s_{2}$}${)}$ and we
can take any $\mu _{1}$ ${(}${resp. $\mu _{2}$}${)}$. We will write $\mu
_{i}$, $i\in \{1,2\}$ to make it clear in the definition. With this
representation in hands we can now introduce the following four
branching processes: 

\begin{enumerate}
\item $\left\{ X(t),t\geq 0\right\}= \{ \mathbf{e}_2, (s_1,\mu_1),
(g^{(X)}_{2}(s_1,s_2),1) \} $, where
\begin{equation}
g_{2}^{(X)}(s_1,s_{2})=\frac{1}{\alpha _{2}}((1-s_{2})^{1+\alpha
_{2}}-1+(1+\alpha _{2})s_{2})=\frac{1}{\alpha _{2}}\sum_{k=2}^{\infty }\frac{%
(1+\alpha _{2})^{[k]}}{k!}(-1)^{k}s_{2}^{k}.  \label{Defg2}
\end{equation}

\item $\left\{ \mathbf{Y}(t)=\left( Y_{1}(t),Y_{2}(t)\right) ,t\geq
0\right\}= \{ \mathbf{e}_2, (s_1,\mu_1),
(g^{(Y)}_{21}(s_1,s_2),1+1/\alpha_2) \} $, where
\begin{eqnarray}
g_{21}^{(Y)}(s_{1},s_{2}) &=&\frac{1}{1+\alpha _{2}}\Big((1-s_{2})^{1+\alpha
_{2}}-1+(1+\alpha _{2})s_{2}+s_{1}\Big)  \notag \\
&=&\frac{1}{1+\alpha _{2}}\sum_{k=2}^{\infty }\frac{\left( 1+\alpha
_{2}\right) ^{[k]}}{k!}(-1)^{k}s_{2}^{k}+\frac{1}{1+\alpha _{2}}s_{1}.
\label{pgfeq1}
\end{eqnarray}

\item $\left\{ V(t),t\geq 0\right\} =\{\mathbf{e}%
_{1},(g_{1}^{(V)}(s_{1}),1),(s_{2},\mu _{2})\}$, where
\begin{equation} \label{pgfeq4}
g_{1}^{(V)}(s_{1})=\frac{1}{\gamma _{1}-1}\Big[(1-s_{1})^{\gamma _{1}}+\gamma _{1}s_{1}-1\Big]=%
\frac{1}{\gamma_1-1}\sum_{k=2}^{\infty }\frac{\gamma_1^{[k]}}{k!}(-1)^{k}s_{1}^{k},
\end{equation}%
and $\gamma _{1}=1+\alpha _{1}(1+\alpha _{2})$.

\item $\left\{ \mathbf{W}(t)=\left( W_{1}(t),W_{2}(t)\right) ,t\geq
0\right\} =\{\mathbf{e}%
_{2},(g_{1}^{(W)}(s_{1}),1),(g_{21}^{(W)}(s_{1},s_{2}),\kappa )\}$, where
\begin{equation}
\kappa :\frac{(1+\alpha _{2})b^{\alpha _{2}}-1}{\alpha _{2}}>1,
\label{defkappa}
\end{equation}%
\begin{equation}
g_{1}^{(W)}(s_{1})=\frac{1}{\alpha _{1}}\Big((1-s_{1})^{1+\alpha
_{1}}-1+(1+\alpha _{1})s_{1}\Big)=\frac{1}{\alpha _{1}}\sum_{k=2}^{\infty }%
\frac{(1+\alpha _{1})^{[k]}}{k!}(-1)^{k}s_{1}^{k},  \label{pgfeq3}
\end{equation}%
and
\begin{eqnarray}
g_{21}^{(W)}(s_{1},s_{2}) &=&\frac{1}{\alpha_2\kappa }\Big(%
\frac{\sigma \alpha _{2}A_{21}}{b}s_{1}+b^{\alpha_2}\Big(%
(1-s_{2})^{1+\alpha _{2}}-1+(1+\alpha _{2})s_{2}\Big)\Big)  \notag \\
&=&\frac{\sigma A_{21}}{b\kappa}s_{1}+\frac{b^{\alpha _{2}}}{%
\alpha _{2}\kappa}\sum_{k=2}^{\infty }\frac{(1+\alpha
_{2})^{[k]}}{k!}(-1)^{k}s_{2}^{k}.  \label{pgfeqW}
\end{eqnarray}
\end{enumerate}
\end{definition}

Note that $X(\cdot )$ and $V(\cdot )$
are monotype processes, while $\mathbf{Y}(\cdot )$ and $\mathbf{W}(\cdot )$
are two-type processes. Moreover, particles of type $1$ of the
process $\mathbf{Y}(\cdot )$ are sterile and immortal.

\begin{definition}
\label{defprocessG} We denote by $\{\mathbf{G}%
(t)=(G_{1}(t),G_{2}(t)),t\geq 0\}$ a two-dimensional Markov process with
values in $\mathbf{N}_{0}\times $ $\mathbf{R}_{+}=\left\{ 0,1,2,...\right\}
\times (0,\infty )$. The initial state of $\mathbf{G}(\cdot )$ is random:%
\begin{equation*}
\mathbf{G}(0)=\left( G_{1}(0),G_{2}(0)\right) =\left( 0,\theta _{2}\right)
\end{equation*}%
where the distribution of $\theta _{2}$ is given by
\begin{equation}
\mathbf{E}\left[ e^{-\lambda \theta _{2}}\right] =1-\left( 1+\lambda
^{-\alpha _{2}}\right) ^{-1/\alpha _{2}},\quad \lambda >0.  \label{lawtheta}
\end{equation}%
The distribution of $\mathbf{G}(t)$ at time $t>0$ is specified by
\begin{equation*}
\mathbf{E}\left[ s_{1}^{G_{1}(t)}e^{-\lambda _{2}G_{2}(t)}\right] =1-\left(
1+\left( t^{1/\alpha _{1}-1}(1-s_{1})^{1-\alpha _{1}}+\lambda _{2}\right)
^{-\alpha _{2}}\right) ^{-1/\alpha _{2}},0\leq s_{1}\leq 1 ,\lambda
_{2}>0,
\end{equation*}%
and the transition probabilities for $0\leq t_{0}<t_{1}<\infty $ and $\left(
n_{1},y\right) \in \mathbf{N}_{0}\times $ $\mathbf{R}_{+}$ are given by
\begin{multline*}
\mathbf{E}\left[ s_{1}^{G_{1}(t_{1})}e^{-\lambda
_{2}G_{2}(t_{1})}|\,G_{1}(t_{0})=n_{1},G_{2}(t_{0})=y\right] =\left( 1-\left[
1-\frac{t_{1}}{t_{0}}+\frac{t_{1}}{t_{0}}(1-s_{1})^{-\alpha
_{1}}\right] ^{-1/\alpha _{1}}\right) ^{n_{1}} \\
\times \exp \left\{ -\left( t_{1}^{1-1/\alpha _{1}}(1-s_{1})^{1-\alpha
_{1}}\left( 1-\left( 1+\left( \frac{t_{0}}{t_{1}}-1\right) (1-s_{1})^{\alpha
_{1}}\right) ^{1-1/\alpha _{1}}\right) +\lambda _{2}\right) y\right\} .
\end{multline*}
\end{definition}

In the sequel the symbol $\mathcal{L}_{\mu }$ will denote the law with
initial condition $\mu $ which may be a measure or a random variable, and we will write 
$\mathcal{L}^{(n)}\left( \cdot
\right) $ for $\mathcal{L}(\cdot |\mathbf{Z}(n)\neq
\mathbf{0})$. Moreover, we say that a process $\{\mathbf{\Theta }%
(t),t\geq 0\}$ has a.s. constant paths with (random) value $\mathbf{\Theta }$
if
\begin{equation*}
\mathbf{P}(\mathbf{\Theta }(t)=\mathbf{\Theta }\text{ for all }t\in
(0,\infty ))=1.
\end{equation*}%

We have the following global description of the reduced process 
under Condition \eqref{Q1NeglQ2} and the assumption $\mathbf{Z}%
(0)=\mathbf{e}_{2},$ where the symbol $\Longrightarrow $ means convergence
in the respective space of c\`adl\`ag functions endowed with
Skorokhod topology.

\begin{theorem}
\label{T_ZubQ1micro}Let Conditions \eqref{F1}-\eqref{defA21} and %
\eqref{Q1NeglQ2} be satisfied.

\begin{enumerate}
\item If $m=\left[ (1-e^{-t})n\right] ,\,t\geq 0,$ then%
\begin{equation*}
\mathcal{L}^{(n)}\{\mathbf{Z}(m,n),0\leq t<\infty \}\Longrightarrow \mathcal{%
L}_{(0,1)}\{(0,X(t)),0\leq t<\infty \};
\end{equation*}

\item If $m=n-th(n)$, $t>0,$ for $h^{\ast }(n)\ll h(n)\ll n,$ then
\begin{equation*}
\mathcal{L}^{(n)}\left\{ \left( Z_{1}(m,n),\frac{Q_{2}(n)}{Q_{21}(h(n))}%
Z_{2}(m,n)\right) ,\,0<t<\infty \right\} \Longrightarrow \mathcal{L}%
\{(0,\theta _{2}(t)),0<t<\infty \},
\end{equation*}%
where the limiting process has a.s. constant paths with value $%
\theta _{2}$ specified by (\ref{lawtheta});

\item If $m=n-th^{\ast }(n)$, $t>0,$ then%
\begin{equation*}
\mathcal{L}^{(n)}\left\{ \left( Z_{1}(m,n),\frac{Q_{2}(n)}{Q_{21}(th^{\ast
}(n))}Z_{2}(m,n)\right) ,\,0<t<\infty \right\} \Longrightarrow \mathcal{L}\{\mathbf{G}(t),0<t<\infty \};
\end{equation*}

\item For any $\lambda _{1},\lambda _{2}>0,$ any function $h(n)$ satisfying $%
1\ll h(n)\ll h^{\ast }(n)$ and $m=n-th(n)$
\begin{equation*}
\mathcal{L}^{(n)}\left\{ \left( \frac{Q_{1}(h^{\ast }(n))}{Q_{1}(th(n))}%
Z_{1}(m,n),\frac{Q_{2}(n)}{Q_{21}(th(n))}Z_{2}(m,n)\right) ,0<t<\infty
\right\} \Longrightarrow \mathcal{L}\{\mathbf{\Theta} (t),0<t<\infty \},
\end{equation*}%
where the limiting process has a.s. constant paths with value $%
\left( \theta _{1},\theta _{2}\right) $ specified by
\begin{equation*}
\mathbf{E}\Big[e^{-\lambda _{1}\theta _{1}}e^{-\lambda _{2}\theta _{2}}\Big]%
=1-\left( 1+\left( \lambda _{1}^{1-\alpha _{1}}+\lambda _{2}\right)
^{-\alpha _{2}}\right) ^{-1/\alpha _{2}}.
\end{equation*}
\end{enumerate}
\end{theorem}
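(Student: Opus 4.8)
I would prove all four statements in the classical way --- convergence of finite-dimensional distributions (f.d.d.) plus tightness in the relevant Skorokhod space --- using Theorem~\ref{T_ZubQ1} to supply the one-time marginals. The structural input is that, conditionally on $\mathbf{Z}(n)\neq\mathbf{0}$, the family $\{\mathbf{Z}(m,n)\}_{0\le m<n}$, read forward in $m$, is a time-inhomogeneous Markov branching chain (see \cite{fleischmann1977structure} for the monotype case): given $\mathbf{Z}(m_{1},n)=(k_{1},k_{2})$, the vector $\mathbf{Z}(m_{2},n)$ for $m_{1}<m_{2}<n$ is the sum of $k_{1}$ i.i.d.\ copies of the level-$m_{2}$ reduced count of a type-$1$ subtree rooted at $m_{1}$ and conditioned to have a descendant at $n$, plus $k_{2}$ i.i.d.\ copies of the analogous type-$2$ object. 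Thus the $k$-time joint generating function is an iterated composition of one-step generating functions, each of exactly the type treated in Theorem~\ref{T_ZubQ1}; these functions being monotone in $(s_{1},s_{2})$, their convergence is locally uniform, and one passes to the limit layer by layer. For instance, in part~(1) the subtree rooted at $m_{1}=(1-e^{-t_{1}})n$ has ``length'' $e^{-t_{1}}n\to\infty$ and its reduced count at $m_{2}=(1-e^{-t_{2}})n$ is read at the fraction $1-e^{-(t_{2}-t_{1})}$, so Theorem~\ref{T_ZubQ1}(1) applies again to it and the iteration reproduces the Chapman--Kolmogorov composition of the process $X(\cdot)$ of Definition~\ref{defprocessX}.

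\textbf{Identifying the limits.} It then remains to recognise the limiting f.d.d.\ In part~(1), putting $a=1-e^{-t}$ in Theorem~\ref{T_ZubQ1}(1) yields the time-$t$ marginal $1-\big(e^{-t}(1-s_{2})^{-\alpha_{2}}+1-e^{-t}\big)^{-1/\alpha_{2}}$, which is exactly the generating function of $X(t)$ with $X(0)=1$: the backward equation $\partial_{t}u=g^{(X)}_{2}(u)-u$, $u(0,s_{2})=s_{2}$, becomes under the substitution $w=(1-u)^{-\alpha_{2}}$ the linear equation $\partial_{t}w=1-w$ with solution $w=1+e^{-t}\big((1-s_{2})^{-\alpha_{2}}-1\big)$; the first coordinate is identically $0$ because $Z_{1}((1-e^{-t})n,n)\Rightarrow0$ for every $t$ (take $s_{1}$ free and let $s_{2}\uparrow1$ in Theorem~\ref{T_ZubQ1}(1)) and $Z_{1}(\cdot,n)$ is non-decreasing. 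In parts~(2) and~(4) the limit has a.s.\ constant paths: Theorem~\ref{T_ZubQ1}(2), resp.~(4), applied with $h$ replaced by $th$ shows that for each fixed $t>0$ the renormalised vector converges to a law that does not depend on $t$, namely $\theta_{2}$, resp.\ $(\theta_{1},\theta_{2})$. In part~(3), $m=n-th^{\ast}(n)$, the time-$t$ marginal is Theorem~\ref{T_ZubQ1}(3) and it equals the one-dimensional law of $\mathbf{G}(t)$ of Definition~\ref{defprocessG}; its transition kernel is obtained by iterating the single-time computation between $t_{0}$ and $t_{1}$, using that on this scale the type-$1$ clusters and the type-$2$ mass present at time $t_{0}$ evolve independently, the former producing the kernel factor raised to the power $n_{1}$ and the latter the factor depending on $y$ (which also absorbs the type-$1$ particles newly produced by the type-$2$ mass).

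\textbf{Tightness.} For every parametrisation I would verify the Aldous tightness criterion from two ingredients: (i) the monotonicity $Z_{i}(m,n)\le Z_{i}(m+1,n)$, which holds because each type-$i$ ancestor at time $m$ has at least one child that is again an ancestor at time $m+1$ and, for $i=1$, that child is of type $1$; and (ii) the fact that the numbers of type-$1$, resp.\ type-$2$, ancestral clusters present at the relevant times are large, so a law of large numbers applies to their independent sub-families. For parts~(2) and~(4), where the limit is constant, (i)+(ii) show that the unnormalised coordinates at two nearby times are asymptotically proportional with the proportionality factor cancelling the change of the deterministic normalisation, hence the renormalised process is asymptotically flat. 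For parts~(1) and~(3), where the limit is c\`adl\`ag with no fixed times of discontinuity, (i)+(ii) control the conditional increments $Z_{i}(m_{2},n)-Z_{i}(m_{1},n)$ over short intervals through the one-dimensional estimates already established, giving $\lim_{\delta\to0}\limsup_{n}\sup_{\sigma_{n}}\mathbf{E}\big[\,|\Xi^{(n)}(\sigma_{n}+\delta)-\Xi^{(n)}(\sigma_{n})|\wedge1\,\big]=0$ over stopping times $\sigma_{n}$, $\Xi^{(n)}$ denoting the renormalised vector.

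\textbf{Main obstacle.} The crux is the cross-regime f.d.d.\ convergence, above all in part~(3): one must show that the renormalised joint law of $\big(Z_{1}(m(t_{0}),n),Z_{2}(m(t_{0}),n),Z_{1}(m(t_{1}),n),Z_{2}(m(t_{1}),n)\big)$, $m(t)=n-th^{\ast}(n)$, converges to the transition kernel of $\mathbf{G}(\cdot)$. This is a genuine two-scale analysis: by time $m(t)$ the type-$2$ mass has settled into the quasi-stationary regime~\eqref{monotype}, its absolute size being affected only through the factor $Q_{2}(n)/Q_{21}(th^{\ast}(n))$, whereas type-$1$ particles are produced at rate $\sim A_{21}$ per type-$2$ particle and thereafter grow like independent critical $(1+\alpha_{1})$-stable families; disentangling the two effects in the joint generating function requires the full strength of Theorem~\ref{T_ZubQ1}(3)--(4) together with the regular-variation lemmas of Section~\ref{sectionauxresults}, and it is precisely here that the hypothesis $nQ_{1}(n)=o(Q_{2}(n))$ and the choice~\eqref{Mu1} of $h^{\ast}$ enter. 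The asymptotic-flatness estimate (ii) underlying the tightness of parts~(2) and~(4) is the other technically delicate point.
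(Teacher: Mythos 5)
Your proposal follows essentially the same route as the paper: exploit the time-inhomogeneous Markov branching structure of $\{\mathbf{Z}(m,n)\}$ (the paper's Lemma \ref{L_convol}) so that joint generating functions are iterated compositions of one-step kernels, identify the one-time marginals via Theorem \ref{T_ZubQ1}, recognise the limit generators through the Kolmogorov equations (Lemma \ref{lemmaathreyaney}), and establish constant paths in parts (2) and (4) by showing the conditional transition kernel degenerates (your ``asymptotic proportionality'' is exactly the paper's computation $\log \hat{J}\sim -\lambda y$ via Lemma \ref{L_trick}). The only substantive deviation is cosmetic: you invoke Aldous' tightness criterion where the paper uses the Gikhman--Skorokhod criterion (Theorem \ref{T_skoroh}), both resting on the same conditional-increment estimates, and the paper itself only carries out the tightness argument in detail for Theorem \ref{T_VatSagmicro}.
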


Let $d_{n}$ and $q_{n},$ $n=1,2,...,$ be positive functions such that $%
d_{n}q_{n}=1$ and
\begin{equation*}
\lim_{n\rightarrow \infty }d_{n}=\infty \text{ and }d_{n}n^{-\varepsilon }=0%
\text{ for any }\varepsilon >0\text{.}
\end{equation*}%
Then we have the following result under Condition \eqref{VaSag0}:

\begin{theorem}
\label{T_zubQ3micro} Let Conditions (\ref{F1})-(\ref{defA21}) and (\ref%
{VaSag0}) be satisfied. Then

\begin{enumerate}
\item
\begin{equation}
\mathcal{L}^{(n)}\{\mathbf{Z}(tg^{\ast }(n),n),0\leq t<\infty
\}\Longrightarrow \mathcal{L}_{(0,1)}\{\mathbf{Y}(t),0\leq t<\infty \};
\label{Ylim}
\end{equation}%

\item If $\gamma _2=\alpha _{2}/(\alpha _{1}(1+\alpha _{2})),$ then%
\begin{equation*}
\mathcal{L}^{(n)}\{\mathbf{Z}(d_{n}g^{\ast }(n)n^{y},n),0\leq y<1-\gamma
_2\}\Longrightarrow \mathcal{L}_{(\theta _{1},0)}\left\{ (\theta
_{1}(y),0),0\leq y<1-\gamma _2\right\} ,
\end{equation*}%
where $\theta _{1}(\cdot )$ has a.s. constant paths with value $%
\theta _{1}$ specified by
\begin{equation}
\mathbf{E}\left[ s^{\theta _{1}}\right] =1-(1-s)^{1/(1+\alpha _{2})},\quad
0\leq s\leq 1.  \label{lawzeta0}
\end{equation}

\item If $m=\left[ \left( (1-e^{-t})+q_{n}\right) n\right] ,$ then%
\begin{equation*}
\mathcal{L}^{(n)}\{\mathbf{Z}(m,n),0\leq t<\infty \,\}\Longrightarrow
\mathcal{L}_{(\theta _{1},0)}\{(V_{1}(t),0),0\leq t<\infty \},
\end{equation*}%
where the law of $\theta _{1}$ has been specified in \eqref{lawzeta0}.

\item If $1\ll h(n)\ll n$ and $m=n-th(n)$ then
\begin{equation*}
\mathcal{L}^{(n)}\left\{ \left( \frac{Q_{1}(n)}{Q_{1}(th(n))}%
Z_{1}(m,n),Z_{2}(m,n)\right) ,0<t<\infty \right\} \Longrightarrow \mathcal{L}%
\{(\rho (t),0),0<t<\infty \},
\end{equation*}%
where the limiting process has a.s. constant paths whose value $%
\rho $ is specified by
\begin{equation*}
\mathbf{E}\left[ e^{-\lambda \rho }\right] =1-(1+\lambda ^{-\alpha
_{1}})^{-1/(\alpha _{1}(1+\alpha _{2}))}.
\end{equation*}
\end{enumerate}
\end{theorem}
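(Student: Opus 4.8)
The plan is to establish each of the four assertions by the classical two steps: convergence of the finite-dimensional distributions of the corresponding rescaled reduced process under $\mathbf{P}_{2}(\,\cdot\mid\mathbf{Z}(n)\neq\mathbf{0})$, and tightness in the Skorokhod space. Two elementary structural facts about $\{\mathbf{Z}(m,n),\,0\le m\le n\}$ will be used throughout. First, $Z_{1}(m,n)$ and the total $Z_{1}(m,n)+Z_{2}(m,n)$ are nondecreasing in $m$: a particle present at level $m$ in the reduced tree has, by definition, descendants at level $n$, hence at least one child at level $m+1$ reaching level $n$, and type~$1$ particles, once present, persist. Second, since type~$2$ particles are produced only by type~$2$ particles, $\{m:Z_{2}(m,n)>0\}$ is an initial segment $\{0,\dots,\tau_{n}-1\}$, where $\tau_{n}$ is the extinction time of the type~$2$ part of the reduced tree. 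Assertion~(1) identifies the reduced process on the scale $g^{\ast}(n)$ with the two-type Markov branching process $\mathbf{Y}(\cdot)$ of Definition~\ref{defprocessX}, whose type~$2$ coordinate is a critical branching process dying out in finite time; hence $\tau_{n}=O_{\mathbf{P}}(g^{\ast}(n))$, and assertions~(2)--(4), whose time windows lie well beyond $g^{\ast}(n)$, reduce as far as the second coordinate is concerned to the statement that $Z_{2}(m,n)\equiv 0$ with probability tending to $1$, and as far as the first coordinate is concerned to essentially monotype reduced-tree statements. So I would prove assertion~(1) first.

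For the finite-dimensional distributions the main device is the time-inhomogeneous branching property: conditionally on $\mathbf{Z}(m_{1},n)=(k_{1},k_{2})$, the family $\{\mathbf{Z}(m,n),\,m_{1}\le m\le n\}$ is the independent superposition of $k_{1}$ reduced subtrees of a type~$1$ particle born at time $m_{1}$ and $k_{2}$ reduced subtrees of a type~$2$ particle born at time $m_{1}$, each conditioned to reach level $n$. Iterating, the joint generating/Laplace transform at times $m_{1}<\dots<m_{k}$ becomes a $k$-fold composition of one-time conditional transforms, and each of the latter is governed (after the time shift $n\mapsto n-m_{i}$, which is asymptotically harmless since all scales considered are $\ll n$) by Theorem~\ref{T_zubQ3}: parts~(0)--(1) feed assertion~(1), part~(2) feeds assertion~(2), part~(3) feeds assertion~(3), part~(4) feeds assertion~(4); the monotype type~$1$ subtrees are controlled by \eqref{AsQ}--\eqref{monotype} and classical monotype reduced-tree asymptotics, and the representation \eqref{SmallQ2} of $Q_{21}$ matches the normalizations. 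Passing to the limit, continuity of the transforms and a direct computation identify the composition with the transition function of the stated limit: $\mathbf{Y}(\cdot)$ in assertion~(1), its type~$1$ particles being sterile and immortal because a type~$1$ lineage, once created on the $g^{\ast}(n)$ scale, contributes exactly one particle to the reduced tree up to times of order $n$ (by Zubkov's theorem the monotype reduced process has one particle until times comparable with its horizon); the monotype process $V_{1}(\cdot)$ started from $\theta_{1}$ particles in assertion~(3); and the constant-path laws $\theta_{1}(\cdot)$ and $\rho(\cdot)$ in assertions~(2) and~(4), where the passage to a limit constant in $t$ uses, besides the branching property, the monotonicity of $Z_{1}(\cdot,n)$ and a weak law of large numbers for the sums of subtree type~$1$ counts. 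The initial law \eqref{lawzeta0} of $\theta_{1}$ is read off from the $t\to\infty$ behaviour of assertion~(1) together with the auxiliary estimates of Section~\ref{sectionauxresults} on the number of founding type~$1$ lineages and on $\tau_{n}$.

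For tightness the nondecreasing coordinates $Z_{1}(m,n)$ and $Z_{1}(m,n)+Z_{2}(m,n)$ are unproblematic: for monotone processes $J_{1}$-tightness follows from the finite-dimensional convergence and the explicit locally finite jump rates of $\mathbf{Y}(\cdot)$ and $V(\cdot)$. The main obstacle is the $Z_{2}$ coordinate, which is a difference of nondecreasing processes but not itself monotone --- in $\mathbf{Y}(\cdot)$ a type~$2$ particle may be replaced by a single immortal type~$1$ particle, so the jumps of $Z_{1}$ and of the total need not coincide and subtraction is not Skorokhod-continuous. For assertions~(2)--(4) this is handled by $\tau_{n}=O_{\mathbf{P}}(g^{\ast}(n))$: with probability tending to $1$ one has $Z_{2}\equiv 0$ on the whole window, while the first coordinate --- monotone in $t$ with both endpoint values converging to the same law in~(2), monotone with a monotone limit in~(3), or squeezed between two rescaled monotone quantities with a common non-degenerate limit in~(4) --- has vanishing oscillation by the finite-dimensional convergence. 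For assertion~(1) one must additionally control the oscillations of $\mathbf{Z}(tg^{\ast}(n),n)$ during the coexistence phase, via an Aldous-type criterion: using the branching decomposition and Theorem~\ref{T_zubQ3}(1), the probability $\mathbf{P}_{2}\big(|Z_{2}((t+\delta)g^{\ast}(n),n)-Z_{2}(tg^{\ast}(n),n)|>\eps\mid\mathbf{Z}(n)\neq\mathbf{0}\big)$ is shown to be $o(1)+O(\delta)$ uniformly for $t$ in a compact set. The tightness analysis of the type~$2$ population, and in assertion~(1) of the exit from the coexistence phase, is the hardest part; I would carry it out in Section~\ref{S_tightness}, collecting the finite-dimensional computations in Section~\ref{S_finiteDimen}.
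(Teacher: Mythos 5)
Your plan for the finite-dimensional distributions is exactly the paper's: the one-dimensional marginals come from Theorem \ref{T_zubQ3}(0)--(4), the multi-time transforms are built by iterating the inhomogeneous branching property (this is Lemma \ref{L_convol} together with \eqref{expreZ1k1}--\eqref{forZ1k1Z2k2}), and the limit is identified by checking that $f_{21}(t;\mathbf{s})$ (resp.\ $f_{1}(t;s_{1})$) solves the backward equation of Lemma \ref{lemmaathreyaney} with generator $g_{21}^{(Y)}$ (resp.\ $g_{1}^{(V)}$), while points (2) and (4) reduce via Lemma \ref{L_trick} to showing the conditional transition transform tends to the identity, which gives the constant paths. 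Where you diverge is tightness. The paper does not use monotonicity or an Aldous criterion; it invokes the Gikhman--Skorokhod criterion for Markov processes (Theorem \ref{T_skoroh}), which requires a bound on $\sup_{\mathbf{y}}\mathbf{P}(\Vert\mathbf{K}_{n}(u_{1})-\mathbf{y}\Vert>\varepsilon\mid\mathbf{K}_{n}(u_{0})=\mathbf{y})$ \emph{uniformly over starting states}, and the whole technical content of Section \ref{S_tightness} (Lemmas \ref{L_ceretain event.}--\ref{L_Kskrokh}) is the Fleischmann--Siegmund-Schultze device of first conditioning on $\Vert\mathbf{Z}(m_{2},n)\Vert\le k$ at a later time so that the relevant states lie in $\mathcal{C}(k)$, and then removing the conditioning by letting $k\to\infty$. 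Be aware that your shortcut for the monotone coordinates is not free: finite-dimensional convergence of nondecreasing integer-valued processes does not by itself give $J_{1}$-tightness (two unit jumps of the prelimit can coalesce into one jump of size two in the limit), and your Aldous-type bound, once you condition on the current state of the Markov process, reintroduces precisely the supremum over states that the paper's conditioning trick is designed to tame. So you should apply the increment estimate to all coordinates, uniformly over initial configurations of bounded size, rather than only to $Z_{2}$; with that amendment your route is viable and essentially reproduces the paper's argument, which itself only sketches tightness for this theorem by reference to the detailed treatment of Theorem \ref{T_VatSagmicro}.
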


For the case $\alpha _{1}=\alpha _{2}=1,$ where $%
2g_{21}^{(Y)}(s_{1},s_{2})=s_{2}^{2}+s_{1}$, this result is established in \cite{Vat14}.

Finally, we have the following result under Condition %
\eqref{VaSag} and the assumption $\mathbf{Z}(0)=\mathbf{e}_{2}$:

\begin{theorem}
\label{T_VatSagmicro}Let Conditions (\ref{F1})-(\ref{defA21}) and (\ref%
{VaSag}) be satisfied.

\begin{enumerate}
\item If $m=\left[ (1-e^{-t})n\right] $, then
\begin{equation*}
\mathcal{L}^{(n)}\{\mathbf{Z}(m,n),0\leq t<\infty \,\}\Longrightarrow
\mathcal{L}_{(0,1)}\{\mathbf{W}(t),0\leq t<\infty \};
\end{equation*}

\item If $n-m=th(n)$, where $1\ll h(n)\ll n$, then
\begin{equation*}
\mathcal{L}^{(n)}\left\{ \left( \frac{Q_{21}(n)}{nQ_{1}(th(n))}Z_{1}(m,n),%
\frac{Q_{2}(n)}{Q_{2}(th(n))}Z_{2}(m,n)\right) ,0<t<\infty \right\} \hspace{%
0cm}\Longrightarrow \mathcal{L}\{\mathbf{\Upsilon }(t),0<t<\infty \},
\end{equation*}%
where the limiting process has a.s. constant paths 
with value $\left( \upsilon _{1},\upsilon _{2}\right) $ specified by
\begin{equation*}
\mathbf{E}\left[ e^{-\lambda _{1}\upsilon _{1}}e^{-\lambda _{2}\upsilon _{2}}%
\right] =1-\psi \left( \lambda _{1},\lambda _{2}\right) .
\end{equation*}
\end{enumerate}
\end{theorem}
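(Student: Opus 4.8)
\textbf{Proof plan for Theorem \ref{T_VatSagmicro}.}

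The plan is to deduce the functional convergence from the one-dimensional marginal asymptotics already recorded in Theorem \ref{T_VatSag}, exactly as in the proofs of Theorems \ref{T_ZubQ1micro} and \ref{T_zubQ3micro}. For part (1), I would first observe that Theorem \ref{T_VatSag}(1) supplies, for each fixed $a\in(0,1)$ (equivalently for $m=\lfloor(1-e^{-t})n\rfloor$ with $t\geq 0$), the limiting joint Laplace transform of $\mathbf{Z}(m,n)$ given $\mathbf{Z}(n)\neq\mathbf{0}$. To upgrade this to convergence of finite-dimensional distributions, I would take $0\le t_0<t_1<\dots<t_k$, write $m_j=\lfloor(1-e^{-t_j})n\rfloor$, and condition successively: by the branching property, given $\mathbf{Z}(m_j,n)$ the future reduced population $\mathbf{Z}(m_{j+1},n)$ is a sum of independent copies of reduced processes started from the relevant particles, which reduces the problem to the one-step transition. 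One then identifies the limiting transition function with that of the Markov branching process $\mathbf{W}(\cdot)$ of Definition \ref{defprocessX}, by checking that the generator associated with $\{\mathbf{e}_2,(g_1^{(W)},1),(g_{21}^{(W)},\kappa)\}$, applied to the candidate limit Laplace functional, reproduces the quasi-linear PDE \eqref{Partial2} (with $b$ as in \eqref{defb}); the initial condition $\mathbf{Z}(0,n)$ corresponding to $\mathbf{e}_2$ at $t=0$ matches $\mathbf{W}(0)=\mathbf{e}_2$ via Theorem \ref{T_VatSag}(0). Tightness in the Skorokhod space is then obtained from the monotonicity of $t\mapsto Z_i(m,n)$ in $m$ together with the convergence of one-dimensional marginals, using the standard criterion for monotone (or nonnegative, finite-variation) processes as carried out in Section \ref{S_tightness}; here I expect to invoke the general tightness lemmas of that section verbatim.

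For part (2), the scaling is different: with $n-m=th(n)$ and $1\ll h(n)\ll n$, Theorem \ref{T_VatSag}(2) already gives, for each fixed $t$, that
\[
\mathbf{E}\left[\exp\left\{-\frac{\lambda_1 Q_{21}(n)}{nQ_1(h(n))}Z_1(m,n)-\frac{\lambda_2 Q_2(n)}{Q_2(h(n))}Z_2(m,n)\right\}\Big|\mathbf{Z}(n)\neq\mathbf{0}\right]\to 1-\psi(\lambda_1,\lambda_2).
\]
The claim is that the rescaled process is asymptotically constant in $t$, so I would show that for $0<t_0<t_1$ the pair at time $t_1$ and the pair at time $t_0$ have the same limit and are perfectly correlated in the limit. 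This follows because $Q_1(th(n))\sim Q_1(h(n))$ and $Q_2(th(n))\sim Q_2(h(n))$ by slow variation (the ratio is $\to 1$ uniformly on compact $t$-sets), so the normalizing constants do not depend on $t$ asymptotically, while $|Z_i(m,n)-Z_i(m',n)|$ for $m=n-t_1h(n)$, $m'=n-t_0h(n)$ is, on the event of survival, of smaller order than the scale — one controls it by the fact that the number of ancestral lines that split off in the thin time layer $[m,m']$ is negligible after the $Q_{21}(n)/(nQ_1(h(n)))$, resp. $Q_2(n)/Q_2(h(n))$, normalization, again an estimate of the type established in Section \ref{S_tightness}. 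Having fixed the a.s.\ constant value $(\upsilon_1,\upsilon_2)$ through its joint Laplace transform $1-\psi(\lambda_1,\lambda_2)$, the finite-dimensional convergence to a process with constant paths and the (trivial) tightness of such a process give the functional statement.

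The main obstacle I anticipate is the identification of the limiting semigroup in part (1) with that of $\mathbf{W}(\cdot)$: one has to verify that the limiting conditional transition Laplace functional, obtained by iterating Theorem \ref{T_VatSag}(1) along a partition of $[0,t]$, solves the correct evolution equation, and that the branching mechanism read off from \eqref{pgfeq3}--\eqref{pgfeqW} — in particular the linear-in-$s_1$ term $\tfrac{\sigma A_{21}}{b\kappa}s_1$ encoding the production of type-$1$ particles and the $b^{\alpha_2}$-weighted $(1+\alpha_2)$-stable branching of type-$2$ particles — is exactly what the PDE \eqref{Partial2} prescribes, with the constant $b$ forced by \eqref{defb}. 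This is a computation, but a delicate one, since the two normalizations $Q_{21}(n)/(nQ_1(h(n)))$ and $Q_2(n)/Q_2(h(n))$ interact with the asymptotics \eqref{AsQ}, \eqref{SmallQ2} and with Condition \eqref{VaSag} through the factor $\sigma$; keeping track of these constants so that the PDE coefficients come out precisely as in \eqref{Partial2}--\eqref{Inpsi} is where the bulk of the work lies. By contrast, the passage from finite-dimensional convergence to Skorokhod convergence should be routine given the monotonicity of $m\mapsto\mathbf{Z}(m,n)$ and the tightness machinery of Sections \ref{S_finiteDimen}--\ref{S_tightness}.
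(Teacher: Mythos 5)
Your plan for part (1) is essentially the paper's route: one identifies the candidate limit $f_{21}(t;\mathbf{s})=1-(1-e^{-t})^{-1/\alpha_2}\psi(\bar\lambda_1,\bar\lambda_2)$ from Theorem \ref{T_VatSag}(1), differentiates in $t$, uses the PDE \eqref{Partial2} together with $1+\alpha_2^{-1}=\alpha_1^{-1}$ and \eqref{defb} to check the Kolmogorov equation $g_{21}^{(W)}(f_1,f_{21})-f_{21}=\kappa^{-1}\partial_t f_{21}$, and invokes Lemma \ref{lemmaathreyaney} plus the Markov branching structure of Lemma \ref{L_convol} for the finite-dimensional distributions. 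So far so good.

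There is, however, a genuine error in your argument for part (2). You claim that $Q_1(th(n))\sim Q_1(h(n))$ and $Q_2(th(n))\sim Q_2(h(n))$ ``by slow variation,'' so that the normalizing constants are asymptotically $t$-independent and the constancy of the limit paths reduces to showing the unnormalized increments over the layer are negligible. But $Q_i$ is regularly varying with index $-1/\alpha_i$, not slowly varying: by \eqref{AsQ}, $Q_1(th(n))/Q_1(h(n))\to t^{-1/\alpha_1}$ and $Q_2(th(n))/Q_2(h(n))\to t^{-1/\alpha_2}$, which is why the theorem's normalization must carry the argument $th(n)$. Correspondingly the unnormalized counts do change by a nontrivial factor between $m=n-t_0h(n)$ and $m'=n-t_1h(n)$ (e.g.\ $\mathbf{E}_2[Z_1(m,n)]=A_{21}mQ_1(n-m)\approx A_{21}nQ_1(t_0h(n))$ scales like $t_0^{-1/\alpha_1}$), so the increment is \emph{not} of smaller order than the scale. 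The actual mechanism for the a.s.\ constant paths is a law-of-large-numbers effect: conditionally on the normalized value at the earlier time $n-t_0h(n)$ being $y$, the (large) number of ancestral lines each contribute a mean $Q_i(t_1h(n))/Q_i(t_0h(n))$ of descendants in the reduced tree at time $n-t_1h(n)$, and after renormalizing by the $t_1$-dependent constant the sum concentrates at $y$; the paper carries this out through the transition Laplace transforms \eqref{expreZ1k1}--\eqref{forZ1k1Z2k2}, Lemma \ref{L_trick}, and a separate estimate showing the type-$1$ particles freshly produced by type-$2$ ancestors in the layer are negligible. Relatedly, you underestimate the tightness step: the Gikhman--Skorokhod criterion (Theorem \ref{T_skoroh}) demands uniformity over the starting state $\mathbf{y}$, which fails without the truncation $\Vert\mathbf{Z}(m_2,n)\Vert\le k$ in part (1) (the stay-put probability behaves like $(1-Cg)^{\Vert\mathbf{z}\Vert}$), and for part (2) requires the von Bahr--Esseen moment bounds of order $\gamma\in(1,1+\alpha_i)$; also note that $Z_2(m,n)$ is not monotone in $m$ (only $Z_1$ and $Z_1+Z_2$ are), so a ``standard criterion for monotone processes'' does not apply to the two-type vector directly.
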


We end this section
by the description of the distribution of the MRCA. More precisely,
noticing that
\begin{equation*}
\{Z_{1}(m,n)+Z_{2}(m,n)=1\}=\{\beta _{n}\geq m\},
\end{equation*}%
where the definition of $\beta _{n}$ has been given in 
\eqref{defMRCA}, we deduce from Theorems \ref{T_ZubQ1micro}-\ref%
{T_VatSagmicro} the following statements concerning the birth time $\beta _{n}$ and the type $\mathcal{T}_{n}$ of the
MRCA:

\begin{theorem}
\label{T_MRCA}Let Conditions (\ref{F1})-(\ref{defA21}) be satisfied.

1) If Condition (\ref{Q1NeglQ2}) is fulfilled then for any $%
a\in (0,1)$
\begin{equation*}
\lim_{n\rightarrow \infty }\mathbf{P}\left( \beta _{n}\leq an,%
\mathcal{T}_{n}=2|\mathbf{Z}(n)\neq \mathbf{0}\right) =a;
\end{equation*}

2) If Condition (\ref{VaSag0}) is fulfilled, then for any $t\in
(0,\infty )$ and $a\in (0,1)$,
\begin{equation*}
\lim_{n\rightarrow \infty }\mathbf{P}_{2}\left( \beta _{n}\leq tg^{\ast }(n),%
\mathcal{T}_{n}=2\Big|\mathbf{Z}(n)\neq \mathbf{0}\right) =\frac{\alpha _{2}%
}{1+\alpha _{2}}(1-e^{-(1+\alpha _{2})t/\alpha _{2}}),
\end{equation*}%
and
\begin{equation*}
\lim_{n\rightarrow \infty }\mathbf{P}_{2}(g^{\ast }(n)\ll \beta _{n}\leq an,%
\mathcal{T}_{n}=1|\mathbf{Z}(n)\neq \mathbf{0})=\frac{a}{1+\alpha _{2}}.
\end{equation*}%

3) If Condition (\ref{VaSag}) is fulfilled, then for any $a\in
(0,1)$
\begin{equation*}
\lim_{n\rightarrow \infty }\mathbf{P}_{2}\left( \beta _{n}\leq an|\mathbf{Z}%
(n)\neq \mathbf{0}\right) =1-\frac{1}{1+\alpha _{2}}(1-a)-\frac{\alpha _{2}}{%
1+\alpha _{2}}(1-a)^{((1+\alpha _{2})b^{\alpha _{2}}-1)/\alpha _{2}}.
\end{equation*}%
Moreover, the type of the MRCA satisfies:
\begin{equation*}
\lim_{n\rightarrow \infty }\mathbf{P}_{2}\left( \mathcal{T}_{n}=2|\mathbf{Z}%
(n)\neq \mathbf{0}\right) =\frac{\alpha _{2}b^{\alpha _{2}}}{(1+\alpha
_{2})b^{\alpha _{2}}-1}.
\end{equation*}
\end{theorem}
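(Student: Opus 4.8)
The plan is to obtain the joint law of $(\beta_{n},\mathcal{T}_{n})$ by reading it off the path of the reduced process $\mathbf{Z}(\cdot,n)$ and then invoking the functional limit theorems of Theorems~\ref{T_ZubQ1micro}--\ref{T_VatSagmicro}. Two elementary facts are used throughout. First, $Z_{1}(m,n)+Z_{2}(m,n)$ is non-decreasing in $m$, so, as already noted, $\{\beta_{n}\geq m\}=\{Z_{1}(m,n)+Z_{2}(m,n)=1\}$: once there are at least two ancestors there are always at least two. Second, a type~$1$ particle has only type~$1$ descendants, so the unique ancestral lineage, once of type~$1$, stays of type~$1$; hence $\mathcal{T}_{n}=1$ iff $Z_{1}(m,n)=1$, $Z_{2}(m,n)=0$ for some $m\leq\beta_{n}$, and $\mathcal{T}_{n}=2$ iff $Z_{1}(\beta_{n},n)=0$. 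Combining, $(\beta_{n},\mathcal{T}_{n})$ is \emph{the last epoch at which $\mathbf{Z}(\cdot,n)$ occupies a one-particle state, together with the type then present}. After the space/time rescalings of Theorems~\ref{T_ZubQ1micro}--\ref{T_VatSagmicro} this is a functional of the rescaled path; I would check that it is a.s.\ continuous at the paths of the limiting continuous-time Markov branching processes --- each such path rests in a one-particle state for an exponential time, jumps at most once from the type-$2$ to the type-$1$ one-particle state, and then jumps to a state with $\geq2$ particles never to return --- and apply the continuous mapping theorem to get convergence in law of $(\beta_{n}/n,\mathcal{T}_{n})$, resp.\ $(\beta_{n}/g^{\ast}(n),\mathcal{T}_{n})$, to the corresponding functional of $X$, $\mathbf{W}$, $\mathbf{Y}$.

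Under~\eqref{Q1NeglQ2} (part~1) I would apply Theorem~\ref{T_ZubQ1micro}(1): with $m=[(1-e^{-t})n]$, $\mathcal{L}^{(n)}\{\mathbf{Z}(m,n)\}\Longrightarrow\mathcal{L}_{(0,1)}\{(0,X(t))\}$, where $X$ has no type-$1$ particles and its initial type-$2$ particle waits an $\mathrm{Exp}(1)$ time and then splits into $\geq2$ type-$2$ particles. Thus in the $t$-scale the last one-particle epoch is this $\mathrm{Exp}(1)$ variable, $\{\beta_{n}>an\}$ corresponds to $\{t>-\ln(1-a)\}$, and $\mathbf{P}(\beta_{n}\leq an\mid\mathbf{Z}(n)\neq\mathbf{0})\to a$. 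Separately, $\mathbf{P}(\mathcal{T}_{n}=1\mid\mathbf{Z}(n)\neq\mathbf{0})\to0$: type-$1$ ancestors first appear on the terminal scale $h^{\ast}(n)=o(n)$, on which the number of type-$2$ ancestors already diverges in the limit (Theorem~\ref{T_ZubQ1micro}(3)), so $n-\beta_{n}\gg h^{\ast}(n)$ with probability tending to~$1$; by Theorem~\ref{T_ZubQ1micro}(2) and the monotonicity of $Z_{1}(\cdot,n)$ this forces $Z_{1}(\beta_{n},n)=0$, i.e.\ $\mathcal{T}_{n}=2$, with probability tending to~$1$. Together these give part~1.

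Under~\eqref{VaSag} (part~3) I would apply Theorem~\ref{T_VatSagmicro}(1), $\mathcal{L}^{(n)}\{\mathbf{Z}([(1-e^{-t})n],n)\}\Longrightarrow\mathcal{L}_{(0,1)}\{\mathbf{W}(t)\}$. In $\mathbf{W}$ the initial type-$2$ particle waits an $\mathrm{Exp}(\kappa)$ time and then, reading off $g_{21}^{(W)}$ and using \eqref{defb} to write $\sigma A_{21}/b=(b^{\alpha_{2}}-1)/\alpha_{2}$, either becomes a single type-$1$ particle with probability $p_{1}=\frac{b^{\alpha_{2}}-1}{(1+\alpha_{2})b^{\alpha_{2}}-1}$ (that type-$1$ particle itself then waits an $\mathrm{Exp}(1)$ time and splits into $\geq2$ type-$1$ particles), or splits into $\geq2$ type-$2$ particles with probability $1-p_{1}$. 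Hence $\mathcal{T}_{n}=1$ with limiting probability $p_{1}$ --- and then the last one-particle epoch is the sum of the two exponentials --- while $\mathcal{T}_{n}=2$ with limiting probability $1-p_{1}=\frac{\alpha_{2}b^{\alpha_{2}}}{(1+\alpha_{2})b^{\alpha_{2}}-1}$ --- and then that epoch is the first $\mathrm{Exp}(\kappa)$ variable. Writing the law of the last one-particle epoch as the mixture $(1-p_{1})\,\mathrm{Exp}(\kappa)+p_{1}\,\big(\mathrm{Exp}(\kappa)\ast\mathrm{Exp}(1)\big)$, evaluating its distribution function at $-\ln(1-a)$, and using $\kappa=\frac{(1+\alpha_{2})b^{\alpha_{2}}-1}{\alpha_{2}}$ and $\kappa-1=\frac{(1+\alpha_{2})(b^{\alpha_{2}}-1)}{\alpha_{2}}$ (so that $p_{1}\frac{\kappa}{\kappa-1}=\frac{1}{1+\alpha_{2}}$), I get precisely the displayed expression of part~3 for $\mathbf{P}(\beta_{n}\leq an\mid\mathbf{Z}(n)\neq\mathbf{0})$.

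Under~\eqref{VaSag0} (part~2) the type-$1$ sub-tree branches on a scale much larger than $g^{\ast}(n)$, so two micro-theorems must be combined. On scale $g^{\ast}(n)$, Theorem~\ref{T_zubQ3micro}(1) gives $\mathcal{L}^{(n)}\{\mathbf{Z}(tg^{\ast}(n),n)\}\Longrightarrow\mathcal{L}_{(0,1)}\{\mathbf{Y}(t)\}$; in $\mathbf{Y}$ the initial type-$2$ particle waits an $\mathrm{Exp}((1+\alpha_{2})/\alpha_{2})$ time and, by $g_{21}^{(Y)}$, with probability $\frac{\alpha_{2}}{1+\alpha_{2}}$ splits into $\geq2$ type-$2$ particles (then $\mathcal{T}_{n}=2$ and $\beta_{n}/g^{\ast}(n)$ converges to that exponential) and with probability $\frac{1}{1+\alpha_{2}}$ becomes a single type-$1$ particle, which is sterile and immortal in $\mathbf{Y}$ (then $\mathcal{T}_{n}=1$ and $\beta_{n}\gg g^{\ast}(n)$). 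This yields the first formula of part~2 and $\mathbf{P}(\mathcal{T}_{n}=1\mid\mathbf{Z}(n)\neq\mathbf{0})\to\frac{1}{1+\alpha_{2}}$. Conditionally on $\{\mathcal{T}_{n}=1\}$ there is one surviving type-$1$ founder lineage, which on scale~$n$ is described by Theorem~\ref{T_zubQ3micro}(3) as a $V$-process started from $\theta_{1}$ particles, $\theta_{1}$ having the law \eqref{lawzeta0} with $\mathbf{P}(\theta_{1}=1)=\frac{1}{1+\alpha_{2}}$; moreover $\{\mathcal{T}_{n}=1\}$ coincides asymptotically with $\{\theta_{1}=1\}$, since any two type-$1$ founder lineages surviving to time~$n$ have a type-$2$ most recent common ancestor. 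On $\{\theta_{1}=1\}$ the last one-particle epoch is the $\mathrm{Exp}(1)$ first-branching time of $V_{1}$, so with $m=[(1-e^{-t})n]$ I obtain $\mathbf{P}(\beta_{n}\leq an,\mathcal{T}_{n}=1\mid\mathbf{Z}(n)\neq\mathbf{0})\to\frac{1}{1+\alpha_{2}}\big(1-(1-a)\big)=\frac{a}{1+\alpha_{2}}$, the second formula of part~2. The hardest points will be making rigorous that $(\beta_{n},\mathcal{T}_{n})$ is, after rescaling, an a.s.-continuous functional of the reduced path at the limit --- in particular that $\{\mathcal{T}_{n}=1\}$ is asymptotically the event that the limit visits a type-$1$ one-particle state, which rests on branchings inside the type-$1$ sub-tree occurring on a strictly larger scale than the first appearance of a type-$1$ ancestor --- and, in case~\eqref{VaSag0}, establishing the compatibility between the scale-$g^{\ast}(n)$ description ($\mathbf{Y}$) and the scale-$n$ description ($V$) needed to read off the conditional law of $\beta_{n}$ on $\{\mathcal{T}_{n}=1\}$, which is not literally contained in any single micro-theorem.
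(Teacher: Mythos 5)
Your proposal is correct and follows essentially the same route as the paper: the joint law of $(\beta_{n},\mathcal{T}_{n})$ is read off the limiting reduced-process descriptions of Theorems \ref{T_ZubQ1micro}--\ref{T_VatSagmicro}, and your computations for parts 1 and 3 (the $\mathrm{Exp}(1)$ holding time of $X$, and the mixture $(1-p_{1})\,\mathrm{Exp}(\kappa)+p_{1}\,(\mathrm{Exp}(\kappa)\ast\mathrm{Exp}(1))$ for $\mathbf{W}$ with $p_{1}=\sigma A_{21}/(b\kappa)$) reproduce exactly the paper's computation, which obtains the same one-particle occupation probabilities by differentiating the limiting generating functions \eqref{Zub12} and \eqref{T_zubQ3point4} at $0$ rather than by quoting holding times. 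The only substantive divergence is in the second formula of part 2: the paper sidesteps the cross-scale ($\mathbf{Y}$ versus $V$) compatibility issue you flag as the hardest point by working only with the one-dimensional limits of Theorem \ref{T_zubQ3}(2)--(3) --- it computes $\lim_{n}\mathbf{P}_{2}(Z_{1}(m,n)=1|\mathbf{Z}(n)\neq\mathbf{0})=1/(1+\alpha_{2})$ for $g^{\ast}(n)\ll m\ll n$ and $\lim_{n}\mathbf{P}_{2}(Z_{1}(an,n)=1|\mathbf{Z}(n)\neq\mathbf{0})=(1-a)/(1+\alpha_{2})$ by evaluating $\partial/\partial s_{1}$ at $s_{1}=0$, and subtracts, using only the monotonicity of $Z_{1}(\cdot,n)$ in $m$. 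This replaces your identification of $\{\mathcal{T}_{n}=1\}$ with $\{\theta_{1}=1\}$ and the $\mathrm{Exp}(1)$ holding time of $V$; your route is also viable (the identification follows from monotonicity of the total ancestor count together with the coincidence $\mathbf{P}(\theta_{1}=1)=1/(1+\alpha_{2})$ of the two limiting probabilities, rather than from the ``two founder lineages'' argument you sketch), but the paper's is shorter and requires nothing beyond the local theorems already proved.
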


\begin{remark}
\label{lawbetan2} As a by-product of the proof of Theorem \ref{T_zubQ3}
(see Equation \eqref{dmom}) we can obtain a simple
expression for the death moment $\delta _{n}(2)$ of the last ancestor of
type 2,
\begin{equation*}
\delta _{n}(2):=\min \left\{ m\leq n:Z_{2}\left( m,n\right) =0\right\} .
\end{equation*}%
Namely, under the conditions of Theorem \ref{T_zubQ3} for any $t\in
(0,\infty )$
\begin{equation*}
\lim_{n\rightarrow \infty }\mathbf{P}_{2}\left( \delta _{n}(2)\leq tg^{\ast
}(n)|\mathbf{Z}(n)\neq \mathbf{0}\right) =1-(1+t)^{-1/\alpha _{2}}.
\end{equation*}
\end{remark}

Theorem \ref{T_MRCA} states that the law of the MRCA under
Conditions \eqref{VaSag0} and \eqref{VaSag} essentially differs from its law
under Condition \eqref{Q1NeglQ2}, where it is almost surely of type $2$ and
with birth time uniformly distributed on the time interval $[0,n]$.

\section{Auxiliary results}

\label{sectionauxresults}

In this section we list some known results and prove a number of auxiliary
lemmas. The first statement, which will be useful at
several occasions, is a direct consequence of the representation of
regularly varying functions (see for example \cite{Fel}, Ch.VIII, Section~9).
\label{Lemma1}

\begin{lemma}
\label{L_regular}Let $R(n)$ be a function regularly varying at infinity with
index $-\alpha <0$. If $n\geq m\rightarrow \infty $ then $R(m)\gg R(n)$ if
and only if $m\ll n$.
\end{lemma}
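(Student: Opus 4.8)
The statement to prove is Lemma~\ref{L_regular}: if $R$ is regularly varying at infinity with index $-\alpha<0$, and $n\geq m\to\infty$, then $R(m)\gg R(n)$ if and only if $m\ll n$.

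\begin{proof}[Plan]
The plan is to exploit the representation theorem for regularly varying functions together with the uniform convergence theorem on compact sets. Write $R(x)=x^{-\alpha}\ell(x)$ with $\ell$ slowly varying.

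First I would prove the ``if'' direction. Assume $m\ll n$. Then
\begin{equation*}
\frac{R(n)}{R(m)}=\Big(\frac{m}{n}\Big)^{\alpha}\cdot\frac{\ell(n)}{\ell(m)}.
\end{equation*}
The first factor tends to $0$ since $m/n\to 0$ and $\alpha>0$. For the second factor, I would invoke the Potter bounds (see \cite{Fel}, Ch.~VIII, Section~9, or \cite{Sen77}): for any $\delta>0$ there is $x_0$ such that for $x_0\leq m\leq n$,
\begin{equation*}
\frac{\ell(n)}{\ell(m)}\leq 2\Big(\frac{n}{m}\Big)^{\delta}.
\end{equation*}
Hence $R(n)/R(m)\leq 2(m/n)^{\alpha-\delta}$, and choosing $\delta<\alpha$ forces $R(n)/R(m)\to 0$, i.e.\ $R(m)\gg R(n)$.

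For the ``only if'' direction, I would argue by contraposition: suppose $m\ll n$ fails along some subsequence, i.e.\ there is $c>0$ and a subsequence along which $m/n\geq c$ (recall $m\leq n$ so $m/n\in[c,1]$). Along this subsequence, by the uniform convergence theorem for slowly varying functions, $\ell(n)/\ell(m)\to 1$ uniformly for the ratio $m/n$ ranging in the compact set $[c,1]$, so $\ell(n)/\ell(m)$ stays bounded away from $0$ and $\infty$. Consequently $R(n)/R(m)=(m/n)^{\alpha}\ell(n)/\ell(m)$ stays bounded below by a positive constant along the subsequence, which contradicts $R(m)\gg R(n)$. Therefore $R(m)\gg R(n)$ implies $m\ll n$.

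The only mild subtlety---the ``main obstacle''---is handling the slowly varying factor when the ratio $m/n$ does not converge: one must apply Potter's bounds (uniform on $\{x_0\le m\le n\}$) rather than the naive pointwise asymptotic $\ell(n)/\ell(m)\to1$, which is valid only when $n/m$ is bounded. Once the uniform estimates are in place, both implications follow immediately.
\end{proof}
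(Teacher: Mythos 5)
Your proof is correct and takes essentially the route the paper intends: the paper gives no argument beyond noting the lemma is "a direct consequence of the representation of regularly varying functions," and your use of the Potter bounds (for the "if" direction) and the uniform convergence theorem on compacts (for the contrapositive of the "only if" direction) is exactly the standard way of extracting that consequence. Your remark that the naive pointwise asymptotic $\ell(n)/\ell(m)\to 1$ cannot be used when $n/m$ is unbounded is the right subtlety to flag.
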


We now reformulate Theorem 1 in \cite{Z},
formula (11) in \cite{Sag87} and Theorem 1 in \cite{VS} as a single
statement. It will be an important tool in  describing 
the local behavior for the reduced processes.

\begin{theorem}
\label{Zub1} Let Conditions (\ref{F1})-(\ref{defA21}) be satisfied.

\begin{enumerate}
\item If Condition \eqref{Q1NeglQ2} holds, then
\begin{equation}
Q_{21}(n)\sim Q_{2}(n),  \label{AsQ1Q22}
\end{equation}%
and for $\lambda _{1}>0,\lambda _{2}>0$
\begin{multline}
\lim_{n\rightarrow \infty }\mathbf{E}\left[ \exp \left\{ -\lambda
_{1}Q_{1}(h^{\ast }(n))Z_{1}(n)-\lambda _{2}Q_{2}(n)Z_{2}(n)\right\} |%
\mathbf{Z}(n)\neq \mathbf{0}\right]  \\
=1-\left( 1+(\lambda _{1}^{1-\alpha _{1}}+\lambda _{2})^{-\alpha
_{2}}\right) ^{-1/\alpha _{2}};  \label{BadStatement}
\end{multline}

\item If Condition \eqref{VaSag0} holds, then
\begin{equation}
Q_{21}(n)\sim Q_{1}^{1/(1+\alpha _{2})}(n)L_{3}(n),\quad
Q_{2}(n)=o(Q_{21}(n)),  \label{SmallQ2}
\end{equation}%
where $L_{3}(n)$ is a slowly varying function as $n\rightarrow \infty ,$ and%
\begin{equation}
Q_{21}^{1+\alpha _{2}}(n)L_{2}(Q_{21}(n))\sim A_{21}Q_{1}(n),  \label{Q21Q1}
\end{equation}%
where $L_{2}$ has been introduced in \eqref{F2}. In
addition, for all $\lambda_{1}\geq 0,\lambda _{2}\geq 0$%
\begin{equation*}
\lim_{n\rightarrow \infty }\mathbf{E}\Big[\exp \left\{ -\lambda
_{1}Q_{1}(n)Z_{1}(n)\right\} |\mathbf{Z}(n)\neq \mathbf{0}\Big]=1-(1+\lambda
_{1}^{-\alpha _{1}})^{-1/(\alpha _{1}(1+\alpha _{2}))},
\end{equation*}%
and
\begin{equation}
\lim_{n\rightarrow \infty }\frac{1}{Q_{2}(n)}\mathbf{E}_{2}\left[ 1-\exp
\left\{ -\lambda _{1}\frac{Q_{2}(n)}{n}Z_{1}(n)-\lambda
_{2}Q_{2}(n)Z_{2}(n)\right\} \right] =\phi (\lambda _{1},\lambda _{2}).
\label{T_sagcom}
\end{equation}%

\item If Condition \eqref{VaSag} holds, then
\begin{equation}
Q_{21}(n)\sim bQ_{2}(n),  \label{surviv}
\end{equation}%
where $b\in \left( 1,\infty \right) $ has been defined in \eqref{defb}. In
addition, for all $\lambda _{1}\geq 0,\lambda _{2}\geq 0$%
\begin{equation}
\lim_{n\rightarrow \infty }\frac{1}{Q_{21}(n)}\mathbf{E}_{2}\left[ 1-\exp
\left\{ -\lambda _{1}\frac{Q_{21}(n)}{n}Z_{1}(n)-\lambda
_{2}Q_{21}(n)Z_{2}(n)\right\} \right] =\psi (\lambda _{1},\lambda _{2}).
\label{equivvatsag}
\end{equation}
\end{enumerate}
\end{theorem}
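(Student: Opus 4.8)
Items~(1)--(3) are, as announced just before the statement, reformulations of Theorem~1 of \cite{Z}, formula~(11) of \cite{Sag87}, and Theorem~1 of \cite{VS}. So the plan is to recall the common generating-function mechanism that underlies all three, read off from it the survival asymptotics \eqref{AsQ1Q22}, \eqref{SmallQ2}, \eqref{Q21Q1}, \eqref{surviv}, and then check that the normalizing sequences in \eqref{AsQ1Q22}--\eqref{equivvatsag} coincide with those of the sources once the monotype survival probabilities are put in the form \eqref{AsQ}--\eqref{ReprQ}.

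The mechanism is the iteration for the joint generating function $F_{21}^{(n)}(s_1,s_2)=\mathbf{E}_2\big[s_1^{Z_1(n)}s_2^{Z_2(n)}\big]$. Since type-$1$ particles breed only type-$1$ particles, decomposability gives $F_{21}^{(n)}(s_1,s_2)=F_{21}\big(F_1^{(n-1)}(s_1),F_{21}^{(n-1)}(s_1,s_2)\big)$, and substituting \eqref{F1}--\eqref{F2} at $s_1=s_2=0$ turns this into the scalar recursion
\begin{equation*}
Q_{21}(n)=Q_{21}(n-1)-Q_{21}^{1+\alpha_2}(n-1)L_2\big(Q_{21}(n-1)\big)+\big(A_{21}-\rho(\cdot)\big)Q_1(n-1).
\end{equation*}
The first correction term is the pure type-$2$ decay, which on its own gives $Q_2(n)\sim\big(\alpha_2nL_2(Q_2(n))\big)^{-1/\alpha_2}$ by \eqref{ReprQ}, and the second is the inflow of freshly emitted type-$1$ particles, of order $A_{21}Q_1(n)$ per step. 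Since the decay of $Q_2$ is of order $Q_2(n)/n$, the inflow is negligible, comparable, or dominant according as $nQ_1(n)\ll Q_2(n)$, $nQ_1(n)\sim\sigma Q_2(n)$, or $Q_2(n)=o(nQ_1(n))$, which is exactly the trichotomy \eqref{Q1NeglQ2}--\eqref{VaSag}. In the first case the type-$2$ balance survives and $Q_{21}(n)\sim Q_2(n)$; in the second, matching the increment $Q_{21}(n-1)-Q_{21}(n)$ with decay minus inflow yields the constant $b$ of \eqref{defb} and $Q_{21}(n)\sim bQ_2(n)$; in the third, decay and inflow must balance to leading order, i.e.\ $Q_{21}^{1+\alpha_2}(n)L_2(Q_{21}(n))\sim A_{21}Q_1(n)$, which by inversion of regularly varying functions produces \eqref{SmallQ2} with a slowly varying $L_3$ and $Q_2(n)=o(Q_{21}(n))$.

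For the Laplace-transform statements I would substitute $s_i=\exp\{-\lambda_i r_i(n)\}$ with $r_i(n)$ the normalizations of \eqref{BadStatement}, \eqref{T_sagcom} and \eqref{equivvatsag}, divide the recursion for $1-F_{21}^{(n)}$ by the relevant survival probability, and pass to the limit along the natural time scale; the discrete recursion then becomes the first-order PDE \eqref{defphi} in case \eqref{VaSag0} and the quasilinear PDE \eqref{Partial2} in case \eqref{VaSag}, whose unique solvability I would quote from \cite{VS,Sag87} and whose initial conditions are read off from the one-step expansion of $F_{21}$ together with $\mathbf{E}_2[Z_1(1)]=A_{21}$ and $\mathbf{E}_2[Z_2(1)]=1$. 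Under \eqref{Q1NeglQ2} the inflow term has dropped out, the limiting equation is linear along the type-$1$ characteristic and integrates explicitly: the type-$1$ subtree taken on its own natural scale $h^{\ast}(n)$ (cf.\ \eqref{Mu1}--\eqref{hstar}) contributes the term $\lambda_1^{1-\alpha_1}$, and composing with the monotype type-$2$ limit \eqref{monotype} produces the closed form in \eqref{BadStatement}; similarly the marginal limit for $Q_1(n)Z_1(n)$ in case \eqref{VaSag0} follows from $\phi(\lambda_1,0)$, whose first integral yields the exponent $1/(\alpha_1(1+\alpha_2))$.

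The step I expect to demand the most care is uniformity: the substitutions above send $(s_1,s_2)\uparrow(1,1)$ along $n$-dependent paths at possibly very different rates, so the slowly varying factors $L_1,L_2,l_1,l_2$ and the remainder $\rho(s_1,s_2)$ must be controlled uniformly over the ranges in play before either the dominant-balance step or the passage to the limit in the recursion can be justified. This uniformity is supplied by \cite{Z,Sag87,VS}; here it suffices to invoke it together with Lemma~\ref{L_regular} and the standard Karamata facts on sums and compositions of regularly varying functions.
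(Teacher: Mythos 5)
The paper itself gives no proof of Theorem~\ref{Zub1} --- it is stated as a reformulation of Theorem~1 of \cite{Z}, formula~(11) of \cite{Sag87} and Theorem~1 of \cite{VS} --- and your proposal correctly treats it the same way, deferring the substance to those references. The dominant-balance sketch you add (the one-step recursion for $Q_{21}(n)$, the trichotomy governed by $nQ_{1}(n)$ versus $Q_{2}(n)$, and the passage to the limiting PDEs \eqref{defphi} and \eqref{Partial2}) is a faithful reconstruction of how the cited works argue and is consistent with the paper's own use of the same recursion in the proof of Lemma~\ref{L_zub1}.
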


To simplify notations we agree to write $\mathbf{s}^{\mathbf{k}}=s_{1}^{k_{1}}s_{2}^{k_{2}}$ for any vector $\mathbf{s}=(s_{1},s_{2})$ and
integer-valued vector $\mathbf{k}=(k_{1},k_{2}).$ Besides, for vectors $%
\mathbf{x}=(x_{1},x_{2})$ and $\mathbf{y}=(y_{1},y_{2})$ we set $\mathbf{x}%
\otimes \mathbf{y}=(x_{1}y_{1},x_{2}y_{2})$ and let $\mathbf{1}=(1,1).$
Let us now present a way of expressing the probability generating function
of the reduced process which will be repeatedly used in the proofs and will
allow us to apply Theorem \ref{T_MRCA}. To do this, we need to
introduce some notation. Put, for $i\in \{1,2\}$
\begin{equation*}
\begin{array}{ll}
F_{i}(n;s_{i})=\mathbf{E}\left[ s_{i}^{Z_{i}(n)}|\mathbf{Z}(0)=\mathbf{e}_{i}%
\right] , & F_{21}(n;\mathbf{s})=\mathbf{E}\left[
\mathbf{s}^{\mathbf{Z}(n)}|\mathbf{Z}(0)=\mathbf{e}_{2}\right] , \\
Q_{i}(n;s_{i})=1-F_{i}(n;s_{i}), &
Q_{21}(n;\mathbf{s})=1-F_{21}(n;\mathbf{s}).%
\end{array}%
\end{equation*}%
 By
conditioning on $\mathbf{Z}(m)$, we can write for $m\leq n$
\begin{equation*}
\mathbf{E}_{2}\Big[\mathbf{s}^{\mathbf{Z}(m,n)}\Big]%
=F_{21}(m;1-(1-s_{1})Q_{1}(n-m),1-(1-s_{2})Q_{21}(n-m)).
\end{equation*}%
Hence, setting $\mathbf{Q}_{21}(k)=\left( Q_{1}(k),Q_{21}(k)\right)
,k=0,1,...,$ we get
\begin{equation}
1-\mathbf{E}_{2}\Big[\mathbf{s}^{\mathbf{Z}(m,n)}|\mathbf{Z}(n)\neq
\mathbf{0}\Big]=(Q_{21}(m;\mathbf{1}-(\mathbf{1}-\mathbf{s})\otimes
\mathbf{Q}_{21}(n-m)))/Q_{21}(n).  \label{expressreduced}
\end{equation}%
Thus, to prove Theorems \ref{T_ZubQ1}-\ref{T_VatSag} it is necessary to find
the limit of the right-hand side of~(\ref{expressreduced}) under an
appropriate choice of $m,n$ and $\mathbf{s}$. If $n-m\rightarrow \infty $ it is equivalent to finding
\begin{equation}
\lim_{m,n\rightarrow \infty }Q_{21}(m;\mathbf{e}^{-(\mathbf{1}-\mathbf{s}%
)\otimes \mathbf{Q}_{21}(n-m)})/Q_{21}(n).  \label{ExpressReduced2}
\end{equation}
The end of this section deals with several expressions similar
to \eqref{expressreduced} or (\ref{ExpressReduced2}). These results will be
needed at several occasions in the remaining sections.

\begin{lemma}
\label{L_trick}If $m\ll n$, then, for $\lambda >0$ and $i\in \{1,2\}$
\begin{equation}
Q_{i}( m;e^{-\lambda Q_{i}(n)}) \sim \lambda Q_{i}(n)\sim
Q_{i}( m;1-\lambda Q_{i}(n)) .  \label{asymp1}
\end{equation}
\end{lemma}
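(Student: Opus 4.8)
The plan is to use the monotype asymptotic for the survival probability together with the fact that $Q_i(n)$ is regularly varying. First I would recall the well-known functional equation $Q_i(m;s_i)=Q_i(1;F_i(m-1;s_i))$ iterated, or more directly use the representation coming from \eqref{F1}: for a monotype critical process in the domain of attraction of a stable law with index $1+\alpha_i$, one has, uniformly for $s_i$ in a neighbourhood of $1$,
\begin{equation*}
\frac{1}{Q_i(n;s_i)^{\alpha_i}}-\frac{1}{(1-s_i)^{\alpha_i}}\sim \alpha_i n L_i\bigl(Q_i(n;s_i)\bigr),
\end{equation*}
which is the standard refinement of \eqref{ReprQ} (it reduces to \eqref{ReprQ} when $s_i=0$). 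I would cite \cite{Sl68} or \cite{wachtel2008limit,pakes2010critical} for this.

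Next I would plug in $s_i=e^{-\lambda Q_i(n)}$ with $m$ playing the role of the time variable. Since $1-s_i=1-e^{-\lambda Q_i(n)}\sim \lambda Q_i(n)\to 0$, we have $(1-s_i)^{-\alpha_i}\sim (\lambda Q_i(n))^{-\alpha_i}$. Applying the displayed refinement with $n$ replaced by $m$ gives
\begin{equation*}
\frac{1}{Q_i(m;s_i)^{\alpha_i}}=\frac{1}{(1-s_i)^{\alpha_i}}+\alpha_i m L_i\bigl(Q_i(m;s_i)\bigr)\bigl(1+o(1)\bigr).
\end{equation*}
Now I would compare the two terms on the right. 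By \eqref{ReprQ} applied at $m$, the quantity $\alpha_i m L_i(Q_i(m))$ is of order $Q_i(m)^{-\alpha_i}=m^{1/\alpha_i\cdot\alpha_i}\cdot(\text{slowly varying})$, i.e. of order $1/Q_i(m)^{\alpha_i}$; whereas $(1-s_i)^{-\alpha_i}$ is of order $(\lambda Q_i(n))^{-\alpha_i}$. Since $m\ll n$ and $Q_i$ is regularly varying with negative index, Lemma~\ref{L_regular} gives $Q_i(m)\gg Q_i(n)$, hence $Q_i(m)^{-\alpha_i}=o\bigl(Q_i(n)^{-\alpha_i}\bigr)=o\bigl((1-s_i)^{-\alpha_i}\bigr)$. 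So the second term on the right is negligible against the first, and we obtain $Q_i(m;s_i)^{-\alpha_i}\sim (1-s_i)^{-\alpha_i}$, i.e. $Q_i(m;s_i)\sim 1-s_i\sim \lambda Q_i(n)$, which is the first asserted equivalence. The third equivalence, with $1-\lambda Q_i(n)$ in place of $e^{-\lambda Q_i(n)}$, follows identically since $1-(1-\lambda Q_i(n))=\lambda Q_i(n)\sim 1-e^{-\lambda Q_i(n)}$ and the argument only used that $1-s_i\sim \lambda Q_i(n)$.

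The main obstacle, and the point that needs care, is justifying the uniform validity of the refined asymptotic $Q_i(m;s_i)^{-\alpha_i}-(1-s_i)^{-\alpha_i}\sim \alpha_i m L_i(Q_i(m;s_i))$ when $s_i$ is allowed to depend on the other parameter $n$ (through $s_i=e^{-\lambda Q_i(n)}$) rather than being fixed, and controlling the slowly varying function $L_i$ evaluated at the unknown $Q_i(m;s_i)$. I would handle this by a monotonicity sandwich: since $0\le 1-s_i\le Q_i(m)\le Q_i(m;s_i)\le$ (something $\to 0$), the slowly varying function $L_i$ evaluated along this range can be replaced by $L_i(1-s_i)$ up to a factor $1+o(1)$ using the uniform convergence theorem for slowly varying functions, which is legitimate because all arguments tend to $0$ together. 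Once $L_i$ is pinned down this way the estimate above closes. An alternative, perhaps cleaner, route avoiding the refined expansion altogether is to write $Q_i(m;s_i)$ via the $m$-fold iterate of $F_i$ and compare directly with the corresponding bound for $Q_i(m)=Q_i(m;0)$, but this still ultimately rests on the same regular-variation input, so I would present the expansion-based argument.
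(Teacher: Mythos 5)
Your route through the Slack-type expansion of $Q_i(m;s)^{-\alpha_i}$ is genuinely different from the paper's, and your central quantitative observation is right: since $\alpha_i nL_i(Q_i(n))\sim Q_i(n)^{-\alpha_i}$ by \eqref{ReprQ} and $m\ll n$, the accumulated drift $\alpha_i m L_i(\,\cdot\,)$ is $o\bigl((1-s_i)^{-\alpha_i}\bigr)$ provided the argument of $L_i$ stays comparable to $1-s_i\sim\lambda Q_i(n)$, whence $Q_i(m;s_i)^{-\alpha_i}\sim(1-s_i)^{-\alpha_i}$. However, the step you yourself single out as delicate is not correctly closed. First, your monotonicity chain is mis-ordered: $Q_i(k;s)$ is non-increasing both in $k$ and in $s$, so the true relations are $Q_i(m;s_i)\le Q_i(0;s_i)=1-s_i$ and $Q_i(m;s_i)\le Q_i(m;0)=Q_i(m)$; there is no a priori \emph{lower} bound of the form $Q_i(m;s_i)\ge Q_i(m)\ge 1-s_i$ (if that chain held, the lemma would follow immediately without any expansion, since the upper bound $Q_i(m;s_i)\le 1-s_i$ is trivial). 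Second, ``all arguments tend to $0$ together'' does not license replacing $L_i(Q_i(k;s_i))$ by $L_i(1-s_i)$: the uniform convergence theorem for slowly varying functions requires the ratio of the arguments to stay in a compact subset of $(0,\infty)$, and the needed lower bound on $Q_i(m;s_i)/(1-s_i)$ is exactly what is missing --- the argument is circular at its key point. It can be repaired by a bootstrap (stop at the first $k$ with $Q_i(k;s_i)<(1-s_i)/2$, apply the uniform convergence theorem on the ratio range $[\tfrac12,1]$ up to that time, and deduce from the $o\bigl((1-s_i)^{-\alpha_i}\bigr)$ drift that the stopping time exceeds $m$), but that step is not in your write-up.

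For comparison, the paper's proof avoids the expansion entirely: it chooses the integer $h_\lambda(n)$ with $Q_i(h_\lambda(n))\le 1-e^{-\lambda Q_i(n)}\le Q_i(h_\lambda(n)-1)$, notes $h_\lambda(n)\sim n\lambda^{-1/\alpha_i}$ by \eqref{AsQ}, and uses the semigroup identity $Q_i(m;F_i(k;0))=Q_i(m+k)$ together with monotonicity in $s$ to sandwich $Q_i(m;e^{-\lambda Q_i(n)})$ between $Q_i(m+h_\lambda(n))$ and $Q_i(m+h_\lambda(n)-1)$; since $m\ll h_\lambda(n)$, both bounds are $\sim Q_i(h_\lambda(n))\sim\lambda Q_i(n)$. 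That sandwich supplies precisely the two-sided control your argument lacks, so you could adopt it either as the whole proof or as the missing lower bound in yours.
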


\begin{proof}
Let $h_{\lambda }(n)$ be the integer-valued function satisfying%
\begin{equation*}
Q_{i}(h_{\lambda }(n))\leq 1-e^{-\lambda Q_{i}(n)}\leq Q_{i}(h_{\lambda
}(n)-1).
\end{equation*}%
We know by (\ref{AsQ}) that $Q_{i}(h_{\lambda }(n))\sim Q_{i}(h_{\lambda
}(n)-1)$ and%
\begin{equation}
1-e^{-\lambda Q_{i}(n)}\sim \lambda Q_{i}(n)\sim Q_{i}( n\lambda
^{-1/\alpha _{i}} ) , \label{MM}
\end{equation}%
implying $h_{\lambda }(n)\sim n\lambda ^{-1/\alpha _{i}}$ as $n\rightarrow
\infty $. Using the branching property we have%
\begin{eqnarray*}
Q_{i}(m+h_{\lambda }(n)) =Q_{i}(m,F_{i}(h_{\lambda }(n),0))&\leq & Q_{i}(
m;e^{-\lambda Q_{i}(n)}) \\
&\leq &Q_{i}(m,F_{i}(h_{\lambda }(n)-1,0))=Q_{i}(m+h_{\lambda }(n)-1).
\end{eqnarray*}%
Since $m\ll h_{\lambda }(n),$ we get, again by (\ref{AsQ}) that, as $%
n\rightarrow \infty $%
\begin{equation*}
Q_{i}\left( m;e^{-\lambda Q_{i}(n)}\right) \sim Q_{i}(m+h_{\lambda }(n))\sim
Q_{i}(h_{\lambda }(n))\sim Q_{i}\Big( n\lambda ^{-1/\alpha _{i}} \Big)\sim
\lambda Q_{i}(n)
\end{equation*}%
proving the first equivalence in (\ref{asymp1}). The second equivalence
follows from (\ref{MM}).
\end{proof}

\hspace{1cm} \newline

\noindent Lemma \ref{L_zub1} deals with asymptotic results on
the time scale $h^{\ast}(n)$ specified in \eqref{Mu1}.

\begin{lemma}
\label{L_zub1}Let Conditions (\ref{F1})-(\ref{defA21}) and \eqref{Q1NeglQ2}
be satisfied and $\lambda _{1},\lambda _{2},a$ be positive.
Define
\begin{equation}
s=1-\lambda _{1}Q_{1}(h^{\ast }(n)).  \label{DeffS}
\end{equation}%
Then we have the following two  equivalences for large $%
n$,
\begin{equation}
\sum_{k=0}^{n-1}Q_{1}(k;s)\sim \frac{\lambda _{1}^{1-\alpha _{1}}}{A_{21}}%
Q_{2}(n),\quad \sum_{k=0}^{ah^{\ast }(n)-1}Q_{1}(k;s)\sim \frac{\lambda
_{1}^{1-\alpha _{1}}}{A_{21}}\Big(1-(1+a\lambda _{1}^{\alpha
_{1}})^{1-1/\alpha _{1}}\Big)Q_{2}(n),  \label{equiv2}
\end{equation}%
where we agree to understand $ah^{\ast }(n)$ as $\left[ ah^{\ast }(n)\right]
$. Moreover, 
\begin{equation}
\lim_{n\rightarrow \infty }\frac{Q_{21}(ah^{\ast }(n);1-\lambda
_{1}Q_{1}(h^{\ast }(n)),1-\lambda _{2}Q_{2}(n))}{Q_{2}(n)}=\lambda
_{1}^{1-\alpha _{1}}\left( 1-(1+a\lambda _{1}^{\alpha _{1}})^{1-1/\alpha
_{1}}\right) +\lambda _{2}.  \label{C_zub2}
\end{equation}
\end{lemma}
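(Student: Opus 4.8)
The plan is to first express $Q_1(k;s)$ for $s=s_n:=1-\lambda_1 Q_1(h^\ast(n))$ as a time–shift of $Q_1$ itself, then read off the two relations in \eqref{equiv2} from Karamata tail estimates together with \eqref{Mu1} and \eqref{Q1NeglQ2}, and finally deduce \eqref{C_zub2} from a discrete form of \eqref{F2} combined with \eqref{equiv2}. For the reduction step, in the spirit of Lemma~\ref{L_trick}, set $g(n)=\min\{k\ge 0:Q_1(k)\le 1-s_n\}$. Since $1-s_n=\lambda_1 Q_1(h^\ast(n))\to 0$ we have $g(n)\to\infty$, and the sandwich $Q_1(g(n))\le 1-s_n<Q_1(g(n)-1)$ together with \eqref{AsQ} and properties of regularly varying functions gives $Q_1(g(n))\sim\lambda_1 Q_1(h^\ast(n))$ and $g(n)\sim\lambda_1^{-\alpha_1}h^\ast(n)$, so in particular $g(n)\ll n$ (recall $h^\ast(n)\ll n$ by \eqref{hstar} and Lemma~\ref{L_regular}). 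Using the monotonicity of $F_1(k;\cdot)$, the semigroup identity $F_1(k;F_1(j;0))=F_1(k+j;0)$, and $F_1(g(n)-1;0)<s_n\le F_1(g(n);0)$, one gets $Q_1(k+g(n))\le Q_1(k;s_n)\le Q_1(k+g(n)-1)$ for every $k\ge 0$; since $Q_1(m)/Q_1(m-1)\to 1$, this yields $Q_1(k;s_n)=(1+o(1))Q_1(k+g(n))$ uniformly in $k\ge 0$.

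Next, since $\alpha_1<1$ under \eqref{Q1NeglQ2}, $Q_1$ is regularly varying of index $-1/\alpha_1<-1$, so Karamata's theorem for tails gives $\sum_{j\ge m}Q_1(j)\sim\alpha_1 mQ_1(m)/(1-\alpha_1)$. Combined with the uniform equivalence above, for any $M=M(n)\to\infty$,
\[
\sum_{k=0}^{M-1}Q_1(k;s_n)\sim\sum_{j=g(n)}^{g(n)+M-1}Q_1(j)\sim\frac{\alpha_1}{1-\alpha_1}\Big(g(n)Q_1(g(n))-(g(n)+M)Q_1(g(n)+M)\Big).
\]
For $M=[ah^\ast(n)]$, regular variation and $g(n)\sim\lambda_1^{-\alpha_1}h^\ast(n)$ give $g(n)Q_1(g(n))\sim\lambda_1^{1-\alpha_1}h^\ast(n)Q_1(h^\ast(n))$ and $(g(n)+M)Q_1(g(n)+M)\sim(\lambda_1^{-\alpha_1}+a)^{1-1/\alpha_1}h^\ast(n)Q_1(h^\ast(n))$; inserting $h^\ast(n)Q_1(h^\ast(n))\sim\frac{1-\alpha_1}{\alpha_1 A_{21}}Q_2(n)$ from \eqref{Mu1} and using $\lambda_1^{1-\alpha_1}-(\lambda_1^{-\alpha_1}+a)^{1-1/\alpha_1}=\lambda_1^{1-\alpha_1}\big(1-(1+a\lambda_1^{\alpha_1})^{1-1/\alpha_1}\big)$ gives the second relation in \eqref{equiv2}. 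For $M=n$ the same display applies, the subtracted term now being $\frac{\alpha_1}{1-\alpha_1}(n+g(n))Q_1(n+g(n))\sim\frac{\alpha_1}{1-\alpha_1}nQ_1(n)=o(Q_2(n))$ by \eqref{Q1NeglQ2}, which gives the first relation in \eqref{equiv2}.

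Finally, for \eqref{C_zub2} write $\mathbf s=(s_n,1-\lambda_2 Q_2(n))$ and $q_m:=Q_{21}(m;\mathbf s)$, so $q_0=\lambda_2 Q_2(n)$. Conditioning on the first generation and inserting \eqref{F2} gives the recursion $q_m=q_{m-1}-q_{m-1}^{1+\alpha_2}L_2(q_{m-1})+(A_{21}-\rho_{m-1})Q_1(m-1;s_n)$ with $\rho_{m-1}=\rho(1-Q_1(m-1;s_n),1-q_{m-1})\in[0,A_{21}]$; summing over $m=1,\dots,M=[ah^\ast(n)]$,
\[
q_M=q_0-\sum_{m=0}^{M-1}q_m^{1+\alpha_2}L_2(q_m)+A_{21}\sum_{m=0}^{M-1}Q_1(m;s_n)-\sum_{m=0}^{M-1}\rho_m Q_1(m;s_n).
\]
Since $Q_1(m;s_n)\le\min\{1-s_n,Q_1(m)\}$, the first relation in \eqref{equiv2} gives $\sum_{m\ge0}Q_1(m;s_n)=O(Q_2(n))$, whence from this identity $q_m=O(Q_2(n))$ uniformly in $m$; hence $Q_1(m;s_n)\to0$ and $q_m\to0$ uniformly, so $\sup_m|\rho_m|\to0$ and the last sum is $o(1)\sum_{m=0}^{M-1}Q_1(m;s_n)=o(Q_2(n))$. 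By Potter's bound $q_m^{1+\alpha_2}L_2(q_m)\le q_m^{1+\alpha_2-\eps}\le CQ_2(n)^{1+\alpha_2-\eps}$ for large $n$, so $Q_2(n)^{-1}\sum_{m=0}^{M-1}q_m^{1+\alpha_2}L_2(q_m)\le Ch^\ast(n)Q_2(n)^{\alpha_2-\eps}$, a regularly varying function of index $\tfrac{\alpha_1}{\alpha_2(1-\alpha_1)}-1+\tfrac{\eps}{\alpha_2}$, which is negative for small $\eps$ because \eqref{Q1NeglQ2} is equivalent to $\alpha_1<\alpha_2/(1+\alpha_2)$, i.e.\ to $\tfrac{\alpha_1}{\alpha_2(1-\alpha_1)}<1$; thus this term is $o(Q_2(n))$ too. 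Dividing the displayed identity by $Q_2(n)$, letting $n\to\infty$, and using $q_0/Q_2(n)=\lambda_2$ with the second relation in \eqref{equiv2} then yields \eqref{C_zub2}.

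The main obstacle is precisely this last estimate: one must control $\sum_{m=0}^{ah^\ast(n)-1}q_m^{1+\alpha_2}L_2(q_m)=o(Q_2(n))$ using the \emph{uniform}-in-$m$ bound $q_m=O(Q_2(n))$ even though the number of summands, of order $h^\ast(n)$, is only polynomially smaller than $n$; the cancellation hinges on the strict inequality in \eqref{Q1NeglQ2} (equivalently $h^\ast(n)=o(n^{1-\delta})$ for some $\delta>0$), and the slowly varying factor $L_2$ forces the use of Potter-type bounds rather than a naive monotonicity argument. A secondary technical point is establishing $g(n)\sim\lambda_1^{-\alpha_1}h^\ast(n)$ rigorously from $Q_1(g(n))\sim\lambda_1 Q_1(h^\ast(n))$, which requires the asymptotic uniqueness of the level sets of the regularly varying function $Q_1$.
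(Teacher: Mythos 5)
Your proof of \eqref{equiv2} is essentially the paper's own argument: the sandwich $Q_{1}(k+g(n))\le Q_{1}(k;s_{n})\le Q_{1}(k+g(n)-1)$ with $g(n)\sim\lambda_{1}^{-\alpha_{1}}h^{\ast}(n)$, Karamata's tail estimate $\sum_{j\ge m}Q_{1}(j)\sim\frac{\alpha_{1}}{1-\alpha_{1}}mQ_{1}(m)$, and the normalisation \eqref{Mu1} are exactly the ingredients used there, and your telescoping of the recursion derived from \eqref{F2}, together with the uniform bound $q_{m}=O(Q_{2}(n))$ and the uniform vanishing of $\rho_{m}$, also matches the paper's route to \eqref{C_zub2}.

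The gap is in the step you yourself flag as the main obstacle. Your claim that \eqref{Q1NeglQ2} is \emph{equivalent} to $\alpha_{1}<\alpha_{2}/(1+\alpha_{2})$, i.e. to $h^{\ast}(n)=o(n^{1-\delta})$ for some $\delta>0$, is false: the boundary case $\alpha_{1}=\alpha_{2}/(1+\alpha_{2})$ is compatible with $nQ_{1}(n)=o(Q_{2}(n))$ when the slowly varying factors do the work, and then by \eqref{hstar} the function $h^{\ast}(n)$ is regularly varying of index exactly $1$ (one only has $h^{\ast}(n)\ll n$, not $o(n^{1-\delta})$). In that case your Potter estimate gives $Q_{2}(n)^{-1}\sum_{m}q_{m}^{1+\alpha_{2}}L_{2}(q_{m})\le Ch^{\ast}(n)Q_{2}(n)^{\alpha_{2}-\eps}$, a regularly varying quantity of index $\eps/\alpha_{2}>0$, so the bound diverges and the argument breaks down. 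The repair is to avoid Potter bounds entirely: since $y\mapsto y^{1+\alpha_{2}}L_{2}(y)=F_{21}(1,1-y)-(1-y)$ is increasing near $0$ and $q_{m}\le C_{1}Q_{2}(n)$, one has $q_{m}^{1+\alpha_{2}}L_{2}(q_{m})\le C\,Q_{2}^{1+\alpha_{2}}(n)L_{2}(Q_{2}(n))\sim C\,Q_{2}(n)/(\alpha_{2}n)$ by the exact identity \eqref{ReprQ}, whence the sum is $O(h^{\ast}(n)Q_{2}(n)/n)=o(Q_{2}(n))$ using only $h^{\ast}(n)\ll n$ (which follows from \eqref{Mu1}, \eqref{Q1NeglQ2} and Lemma \ref{L_regular} in all cases). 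This is what the paper does; your parenthetical remark that the slowly varying factor $L_{2}$ ``forces the use of Potter-type bounds rather than a naive monotonicity argument'' has the situation exactly backwards.
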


\begin{proof}
We do not prove the first assertion in \eqref{equiv2},
as it may be checked in a similar way as the second one. As in the proof of
the previous lemma, if $h_{\lambda }(n)$ is an integer-valued function
satisfying
\begin{equation*}
Q_{1}(h_{\lambda _{1}}(n))\leq 1-s\leq Q_{1}(h_{\lambda _{1}}(n)-1),
\end{equation*}%
then
\begin{equation}
Q_{1}(k+h_{\lambda _{1}}(n))\leq Q_{1}(k;s)\leq Q_{1}(k+h_{\lambda
_{1}}(n)-1)  \label{compaQ1}
\end{equation}%
for every integer $k$. Equations (\ref{DeffS}) and (\ref{AsQ}) entail that,
as $n\rightarrow \infty $%
\begin{equation}
h_{\lambda _{1}}(n)\sim \lambda _{1}^{-\alpha _{1}}h^{\ast }(n).
\label{eqhlambda}
\end{equation}%
By \eqref{compaQ1} we deduce the inequality
\begin{equation*}
0\leq \sum_{k=0}^{ah^{\ast }(n)-1}Q_{1}(k;s_{1})-\sum_{k=h_{\lambda
_{1}}(n)}^{ah^{\ast }(n)+h_{\lambda _{1}}(n)-1}Q_{1}(k)\leq Q_{1}(h_{\lambda _{1}}(n)-1).
\end{equation*}%
Note that in view of (\ref{AsQ}), as $z\rightarrow \infty $%
\begin{equation*}
\sum_{k=z}^{\infty }Q_{1}(k)\sim \frac{\alpha _{1}}{1-\alpha _{1}}%
zQ_{1}(z)\sim \frac{1}{1-\alpha _{1}}z^{1-1/\alpha _{1}}l_{1}(z).
\end{equation*}%
Hence we conclude that%
\begin{multline*}
\sum_{k=h_{\lambda _{1}}(n)}^{ah^{\ast }(n)+h_{\lambda
_{1}}(n)-1}Q_{1}(k)=\sum_{k=h_{\lambda _{1}}(n)}^{\infty
}Q_{1}(k)-\sum_{k=ah^{\ast }(n)+h_{\lambda _{1}}(n)}^{\infty }Q_{1}(k) \\
\sim \frac{\alpha _{1}}{1-\alpha _{1}}\Big(h_{\lambda
_{1}}(n)Q_{1}(h_{\lambda _{1}}(n))-(ah^{\ast }(n)+h_{\lambda
_{1}}(n))Q_{1}(ah^{\ast }(n)+h_{\lambda _{1}}(n))\Big) \\
\sim \frac{\alpha _{1}}{1-\alpha _{1}}\lambda _{1}^{1-\alpha _{1}}\Big(%
1-(1+a\lambda _{1}^{\alpha _{1}})^{1-1/\alpha _{1}}\Big)h^{\ast
}(n)Q_{1}(h^{\ast }(n)) \\
\sim \frac{1}{A_{21}}\lambda _{1}^{1-\alpha _{1}}\Big(1-(1+a\lambda
_{1}^{\alpha _{1}})^{1-1/\alpha _{1}}\Big)Q_{2}(n),
\end{multline*}%
where we have applied \eqref{eqhlambda}, (\ref{AsQ}) and \eqref{Mu1}.

We now prove \eqref{C_zub2}. By definition, for $\mathbf{s}=(s_{1},s_{2})\in \lbrack 0,1]^{2}$ and $k\in
\mathbb{N}_{0}$,
\begin{equation*}
Q_{21}(k+1;\mathbf{s})=Q_{21}(k;\mathbf{s}%
)-Q_{21}^{1+\alpha _{2}}(k;\mathbf{s})L_{2}\left( Q_{21}(k;\mathbf{s}\right) +\left( A_{21}-\rho \left( k;
\mathbf{s}\right) \right) Q_{1}(k;s_{1}),
\end{equation*}%
where%
\begin{equation*}
\rho \left( k;\mathbf{s}\right) =\rho \left( F_{1}(k;s_{1}),F_{21}(k;\mathbf{s})\right)
\rightarrow 0
\end{equation*}%
as $k\rightarrow \infty $ uniformly in $\mathbf{s}\in \left[ 0,1%
\right]^2 $. Thus,%
\begin{equation*}
Q_{21}(ah^{\ast }(n);\mathbf{s})=Q_{21}(0;\mathbf{s})-\sum_{k=0}^{ah^{\ast }(n)-1}\left[ Q_{21}^{1+\alpha _{2}}(k;
\mathbf{s})L_{2}\left( Q_{21}(k;\mathbf{s})\right) -\left(
A_{21}-\rho \left( k;\mathbf{s}\right) \right) Q_{1}(k;s_{1})%
\right] .
\end{equation*}%
Let, for sufficiently large $n$%
\begin{equation*}
s_{1}=1-\lambda _{1}Q_{1}(h^{\ast }(n))\quad \text{and}\quad s_{2}=1-\lambda
_{2}Q_{2}(n).
\end{equation*}%
Using the inequality $1-c_{1}c_{2}\leq (1-c_{1})+(1-c_{2})$ for $%
c_{1},c_{2}\in \lbrack 0,1]$, we get
\begin{equation*}
Q_{21}(k;\mathbf{s})\leq \mathbf{E}_{2}\left[ Z_{1}(k)\right]
\mathbf{(1-s}_{1}\mathbf{)+1-s}_{2}\mathbf{=}A_{21}k\left( 1-s_{1}\right)
+1-s_{2}.
\end{equation*}%
Adding \eqref{AsQ1Q22} and \eqref{hstar}, we obtain for $k\leq
ah^{\ast }(n)$ and some $C_{1}$ independent of $n$,
\begin{equation*}
Q_{21}(k;\mathbf{s})\leq \lambda _{1}A_{21}kQ_{1}(h^{\ast
}(n))+\lambda _{2}Q_{2}(n)\leq C_{1}Q_{2}(n).
\end{equation*}%
This, in view of the monotonicity of $y^{1+\alpha _{2}}L_{2}\left( y\right) $
as $y\downarrow 0$ and $h^{\ast }(n)\ll n$ yields%
\begin{equation*}
\sum_{k=0}^{ah^{\ast }(n)-1}Q_{21}^{1+\alpha _{2}}(k;\mathbf{s}
)L_{2}\left( Q_{21}(k;\mathbf{s})\right) \leq C_{2}h^{\ast
}(n)Q_{2}^{1+\alpha _{2}}(n)L_{2}(Q_{2}(n))\sim C_{2}\frac{%
h^{\ast }(n)Q_{2}(n)}{\alpha _{2}n}=o(Q_{2}(n)),
\end{equation*}%
where $C_{2}$ is finite and independent of $n$ and we have applied \eqref{ReprQ} to get the equivalence. Finally, recalling that
\begin{equation*}
\sum_{k=0}^{ah^{\ast }(n)-1}\left( A_{21}-\rho \left( k;\mathbf{s%
}\right) \right) Q_{1}(k;s_{1})\sim \lambda _{1}^{1-\alpha _{1}}\Big(%
1-(1+a\lambda _{1}^{\alpha _{1}})^{1-1/\alpha _{1}}\Big)Q_{2}(n)
\end{equation*}%
by \eqref{equiv2} and that
\begin{equation*}
Q_{21}(0;\mathbf{s})=1-s_{2}=\lambda _{2}Q_{2}(n)
\end{equation*}%
we get \eqref{C_zub2} and end the proof of the lemma.
\end{proof}

\hspace{1cm}\newline

We now focus on time scales $h(n)$ larger than $h^\ast(n)$,
still under Condition \eqref{Q1NeglQ2}.

\begin{lemma}
\label{L_range}Let Conditions (\ref{F1})-(\ref{defA21}) and (\ref{Q1NeglQ2})
be satisfied and $h(n)$ be an integer-valued function such that $h^{\ast
}(n)\ll h(n)$. Then, for any $s\in \lbrack 0,1)$
\begin{equation*}
\lim_{n\rightarrow \infty }\sup_{m\leq n-h(n)}\mathbf{E}_{2}\left[
1-s^{Z_{1}(m,n)}|\mathbf{Z}(n)\neq \mathbf{0}\right] =\lim_{n\rightarrow
\infty }\sup_{m\leq n-h(n)}\frac{\mathbf{E}_{2}\left[ 1-s^{Z_{1}(m,n)}\right]
}{Q_{21}(n)}=0.
\end{equation*}
\end{lemma}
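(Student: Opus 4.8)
The plan is to start from the representation~\eqref{expressreduced} taken with $\mathbf{s}=(s,1)$. Since $\mathbf{1}-\mathbf{s}=(1-s,0)$, it reads, with $s_{1}:=1-(1-s)Q_{1}(n-m)$,
\begin{equation*}
1-\mathbf{E}_{2}\big[s^{Z_{1}(m,n)}\,|\,\mathbf{Z}(n)\neq\mathbf{0}\big]=\frac{Q_{21}(m;s_{1},1)}{Q_{21}(n)},
\end{equation*}
and since $Z_{1}(m,n)=0$ on $\{\mathbf{Z}(n)=\mathbf{0}\}$, the unconditioned quantity $\mathbf{E}_{2}[1-s^{Z_{1}(m,n)}]$ equals $Q_{21}(m;s_{1},1)$ as well; thus both expressions in the statement equal $Q_{21}(m;s_{1},1)/Q_{21}(n)$. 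As $Q_{21}(n)\sim Q_{2}(n)$ by~\eqref{AsQ1Q22}, it therefore suffices to prove that $\sup_{m\leq n-h(n)}Q_{21}\big(m;1-(1-s)Q_{1}(n-m),1\big)=o\big(Q_{2}(n)\big)$.

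The next step is an upper bound for $Q_{21}(m;s_{1},1)$ via the recursion for $Q_{21}(k;\mathbf{s})$ obtained from~\eqref{F2} (and already used in the proof of Lemma~\ref{L_zub1}): dropping the non-negative term $Q_{21}^{1+\alpha_{2}}(k;\mathbf{s})L_{2}(Q_{21}(k;\mathbf{s}))$ and bounding $A_{21}-\rho(k;\mathbf{s})$ by a constant, summation over $k=0,\dots,m-1$ gives
\begin{equation*}
Q_{21}(m;s_{1},1)\leq (1-s)Q_{1}(n-m)+C\sum_{k=0}^{m-1}Q_{1}(k;s_{1}).
\end{equation*}
Here the naive estimate $Q_{1}(k;s_{1})\leq 1-s_{1}=(1-s)Q_{1}(n-m)$ (valid since $\mathbf{E}_{1}[Z_{1}(k)]=1$) is \emph{not} good enough: it only produces a bound of order $nQ_{1}(h(n))$, which need not be $o(Q_{2}(n))$ when $h(n)$ exceeds $h^{\ast}(n)$ only slightly. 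The improvement uses that $\alpha_{1}<1$ under~\eqref{Q1NeglQ2}, so that $\sum_{k}Q_{1}(k)<\infty$ with regularly varying tail. Comparing $Q_{1}(\cdot;s_{1})$ with a shift of $Q_{1}$ exactly as in the proof of Lemma~\ref{L_zub1}: if $h_{n}$ is the integer with $Q_{1}(h_{n})\leq 1-s_{1}\leq Q_{1}(h_{n}-1)$, then $Q_{1}(k;s_{1})\leq Q_{1}(k+h_{n}-1)$, whence
\begin{equation*}
\sum_{k=0}^{m-1}Q_{1}(k;s_{1})\leq\sum_{j\geq h_{n}-1}Q_{1}(j)\sim\frac{\alpha_{1}}{1-\alpha_{1}}h_{n}Q_{1}(h_{n}).
\end{equation*}
Since $1-s_{1}=(1-s)Q_{1}(n-m)$ and $n-m\geq h(n)\rightarrow\infty$ uniformly in $m\leq n-h(n)$, regular variation of $Q_{1}$ gives $Q_{1}(h_{n})\sim(1-s)Q_{1}(n-m)$ and $h_{n}\sim(1-s)^{-\alpha_{1}}(n-m)$, hence $h_{n}Q_{1}(h_{n})\sim(1-s)^{1-\alpha_{1}}(n-m)Q_{1}(n-m)$, all uniformly in $m$. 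Consequently there is a constant $C(s)<\infty$ with $\sup_{m\leq n-h(n)}Q_{21}(m;s_{1},1)\leq C(s)\sup_{k\geq h(n)}kQ_{1}(k)$ for all large $n$.

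Finally, $x\mapsto xQ_{1}(x)=x^{1-1/\alpha_{1}}l_{1}(x)$ is regularly varying with negative index $1-1/\alpha_{1}$, so $\sup_{k\geq h(n)}kQ_{1}(k)=O\big(h(n)Q_{1}(h(n))\big)$; since $h^{\ast}(n)\ll h(n)$, Lemma~\ref{L_regular} applied to $x\mapsto xQ_{1}(x)$ gives $h(n)Q_{1}(h(n))=o\big(h^{\ast}(n)Q_{1}(h^{\ast}(n))\big)$, and~\eqref{Mu1} gives $h^{\ast}(n)Q_{1}(h^{\ast}(n))\sim\frac{1-\alpha_{1}}{\alpha_{1}A_{21}}Q_{2}(n)$. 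Hence $\sup_{k\geq h(n)}kQ_{1}(k)=o(Q_{2}(n))$, which together with $Q_{21}(n)\sim Q_{2}(n)$ finishes the proof. The main obstacle, and the only genuinely delicate point, is to make the comparison in the second paragraph \emph{uniform} in $m\leq n-h(n)$: this is precisely why one must bound $Q_{21}(m;s_{1},1)$ by a quantity depending on $n-m$ rather than on $n$, and why the naive per-term estimate has to be replaced by the regularly varying tail-sum estimate; the uniformity itself is then inherited from $n-m\geq h(n)\rightarrow\infty$, under which all the regular-variation asymptotics used (for $Q_{1}$, its generalized inverse, and its tail sum) hold with $m$-uniform error terms.
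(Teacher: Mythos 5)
Your proof is correct, but it reaches the key estimate by a genuinely different mechanism than the paper's. You work analytically with the iteration for $Q_{21}(k;\mathbf{s})$ coming from \eqref{F2} (the same recursion the paper exploits in Lemma~\ref{L_zub1}), discard the nonnegative $Q_{21}^{1+\alpha_2}L_2$ term, and then control $\sum_{k<m}Q_1(k;s_1)$ by the shift comparison $Q_1(k;s_1)\leq Q_1(k+h_n-1)$ with $Q_1(h_n)\leq 1-s_1\leq Q_1(h_n-1)$; this forces you to track how $h_n$ depends on $n-m$ and to argue uniformity in $m$ by hand. The paper argues probabilistically instead: from $\mathbf{E}_2[s^{Z_1(m,n)}]\geq \mathbf{E}_2\big[\prod_{k=0}^{m-1}F_1^{\zeta(k)}(n-k;0)\big]$, where $\zeta(k)$ is the number of type~$1$ particles seeded at generation $k$ by type~$2$ parents, the inequality $1-\prod a_i\leq\sum(1-a_i)$ and $\mathbf{E}_2[\zeta(k)]=A_{21}$ give directly $\mathbf{E}_2[1-s^{Z_1(m,n)}]\leq A_{21}\sum_{k\geq n-m}Q_1(k)$ -- the convergent tail sum appears for free, with no dependence on $s$ and no shift argument, and the supremum over $m\leq n-h(n)$ is disposed of at the outset by the monotonicity of $Z_1(m,n)$ in $m$ rather than by a uniformity discussion. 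Both proofs then finish identically: $\sum_{k\geq h(n)}Q_1(k)=O(h(n)Q_1(h(n)))=o(h^{\ast}(n)Q_1(h^{\ast}(n)))=o(Q_2(n))$ via Karamata, Lemma~\ref{L_regular} and \eqref{Mu1}, together with \eqref{AsQ1Q22}. Your diagnosis that the naive bound $Q_1(k;s_1)\leq 1-s_1$ only yields $O(nQ_1(h(n)))$ and is insufficient is exactly right; the two routes are two ways of upgrading it to the tail-sum estimate. Two minor remarks: invoking the monotonicity of $Z_1(m,n)$ in $m$ would have spared you the uniformity paragraph entirely; and your bound $A_{21}-\rho(k;\mathbf{s})\leq C$ deserves one line noting that $F_1(k;s_1)\geq s_1$ and $F_{21}(k;s_1,1)\geq 1-A_{21}n(1-s_1)$ both tend to $1$ uniformly over $k\leq m\leq n-h(n)$ (since $nQ_1(h(n))\to 0$ under \eqref{Q1NeglQ2}), so that $\rho$ is uniformly small where you use it.
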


\begin{proof}
Since $Z_{1}(m,n)$ is monotone increasing in $m$ given $n,$ it is sufficient
to consider $m=n-h(n)$. By conditioning on $Z_{1}(m)$ we get
\begin{equation*}
\mathbf{E}_{2}\left[ s^{Z_{1}(m,n)}\right] =\mathbf{E}_{2}\left[ \left(
F_{1}(n-m;0)+s\left( 1-F_{1}(n-m;0)\right) \right) ^{Z_{1}(m)}\right] \geq
\mathbf{E}_{2}\left[ F_{1}^{Z_{1}(m)}(n-m;0)\right] .
\end{equation*}%
Let $\zeta (k),k=1,2,...$ be the total amount of type $1$ particles at
moment $k$ produced by all particles of type $2$ existing in the branching
process at moment $k-1$. Then,%
\begin{equation*}
\mathbf{E}_{2}\left[ F_{1}^{Z_{1}(m)}(n-m;0)\right] =\mathbf{E}_{2}\left[
\prod_{k=0}^{m-1}F_{1}^{\zeta (k)}(m-k;F_{1}(n-m;0))\right] =\mathbf{E}_{2}%
\left[ \prod_{k=0}^{m-1}F_{1}^{\zeta (k)}(n-k;0)\right] .
\end{equation*}%
By iteration, we also deduce for every $k=1,2,...$:
\begin{equation*}
\mathbf{E}_{2}\left[ \zeta (k)\right] =\mathbf{E}_{2}\left[ \mathbf{E}_{2}%
\left[ \zeta (k)|Z_{2}(k-1)\right] \right] =A_{21}\mathbf{E}_{2}\left[
Z_{2}(k-1)\right] =A_{21}.
\end{equation*}%
Using the estimates above we obtain%
\begin{multline*}
\mathbf{E}_{2}\left[ 1-s^{Z_{1}(m,n)}\right] \leq \mathbf{E}_{2}\left[
1-\prod_{k=0}^{m-1}F_{1}^{\zeta (k)}(n-k;0)\right] \leq \mathbf{E}_{2}\left[
\sum_{k=0}^{m-1}\zeta (k)Q_{1}(n-k)\right]  \\
\leq A_{21}\sum_{k=n-m}^{\infty }Q_{1}(k)=O\left( h(n)Q_{1}(h(n))\right) ,
\end{multline*}%
where at the last stage we have used (\ref{AsQ}) and the fact that, as $%
n\rightarrow \infty $%
\begin{equation*}
Q_{1}(n)=o\left( n^{-1}Q_{2}(n)\right) =o\left( n^{-1-\varepsilon }\right)
\end{equation*}%
for some $\varepsilon >0$. Since $h^{\ast }(n)\ll h(n)$ as $n\rightarrow
\infty ,$ we have by Lemma \ref{L_regular}, (\ref{Mu1}) and (\ref{AsQ1Q22})
that
\begin{equation*}
h(n)Q_{1}(h(n))=o\left( h^{\ast }(n)Q_{1}(h^{\ast }(n))\right) =o\left(
Q_{21}(n)\right) ,
\end{equation*}%
proving the lemma.
\end{proof}

\hspace{1cm}\newline

We now state asymptotic results on the time scale $g^{\ast }(n)$ specified
by \eqref{defgast}.

\begin{lemma}
\label{L_propg}Let Conditions (\ref{F1})-(\ref{defA21}) and (\ref{VaSag0})
be satisfied and $(s_1,s_2)$ be in $[0,1]^2$. Then

\begin{enumerate}
\item
\begin{equation}
Q_{2}(g^{\ast }(n))\sim \alpha _{2}A_{21}Q_{1}(n)g^{\ast }(n);
\label{eqgast}
\end{equation}

\item If $m\ll g^{\ast }(n),$ then%
\begin{equation}
\mathbf{E}_{2}\left[ 1-s_{1}^{Z_{1}(m,n)}\right] =o(Q_{21}(n));
\label{Step1}
\end{equation}

\item If $g^{\ast }(n)\ll m\leq n,$ then
\begin{equation}
\mathbf{E}_{2}\left[ 1-s_{2}^{Z_{2}(m,n)}\right] =o(Q_{21}(n)).
\label{Step2}
\end{equation}
\end{enumerate}
\end{lemma}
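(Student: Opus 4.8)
The plan is to prove the three assertions of Lemma~\ref{L_propg} in turn, using the recursive representation of $Q_{21}(k;\mathbf{s})$ together with the asymptotics collected in Theorem~\ref{Zub1} and the branching-property tricks already established in Lemmas~\ref{L_trick}--\ref{L_range}.

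For part~(1), I would combine the defining relation \eqref{defgast}, $Q_2(g^\ast(n))\sim Q_{21}(n)$, with \eqref{Q21Q1}, which says $Q_{21}^{1+\alpha_2}(n)L_2(Q_{21}(n))\sim A_{21}Q_1(n)$, and with \eqref{ReprQ} applied to the process of type~$2$ in the form $\alpha_2\, g\, Q_2^{\alpha_2}(g)L_2(Q_2(g))\sim 1$ evaluated at $g=g^\ast(n)$. Substituting $Q_2(g^\ast(n))\sim Q_{21}(n)$ into the latter gives $\alpha_2 g^\ast(n) Q_{21}^{\alpha_2}(n)L_2(Q_{21}(n))\sim 1$; multiplying by $Q_{21}(n)$ and using \eqref{Q21Q1} yields $\alpha_2 g^\ast(n) A_{21}Q_1(n)\sim Q_{21}(n)\sim Q_2(g^\ast(n))$, which is exactly \eqref{eqgast}. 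This is essentially a bookkeeping computation with regularly varying functions; the only care needed is that $L_2$ is slowly varying and $Q_{21}(n)\to 0$, so $L_2(Q_2(g^\ast(n)))\sim L_2(Q_{21}(n))$.

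For part~(2), I would proceed exactly as in Lemma~\ref{L_range}: condition on $Z_1(m)$ and then on the successive contributions $\zeta(k)$ of type~$2$ particles to type~$1$, obtaining
\begin{equation*}
\mathbf{E}_2\left[1-s_1^{Z_1(m,n)}\right]\leq A_{21}\sum_{k=n-m}^{\infty}Q_1(k)=O\big((n-m)Q_1(n-m)\big).
\end{equation*}
Since $m\ll g^\ast(n)\ll n$, we have $n-m\sim n$, so the bound is $O(nQ_1(n))$. Under Condition~\eqref{VaSag0}, $Q_2(n)=o(nQ_1(n))$, but more usefully \eqref{SmallQ2}--\eqref{Q21Q1} give $Q_{21}(n)\sim Q_1^{1/(1+\alpha_2)}(n)L_3(n)$, so $nQ_1(n)=nQ_{21}^{1+\alpha_2}(n)L_2(Q_{21}(n))/A_{21}$, which by \eqref{ReprQ} applied to type~$2$ at argument $g^\ast(n)$ and by part~(1) is $\sim n\, Q_{21}(n)/(\alpha_2 g^\ast(n))=o(Q_{21}(n))$ because $g^\ast(n)\ll n$. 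Hence $nQ_1(n)=o(Q_{21}(n))$ and \eqref{Step1} follows.

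For part~(3), the symmetric estimate is needed for the type-$2$ population of the reduced process when $g^\ast(n)\ll m\leq n$. Here I would write $\mathbf{E}_2[1-s_2^{Z_2(m,n)}]\leq \mathbf{E}_2[(1-s_2)Z_2(m,n)]=(1-s_2)\sum_{k\geq n-m}\big(\text{expected type-2 births at time }k\text{ surviving to }n\big)$, or more directly use that $Z_2(m,n)\leq Z_2(m)$ and bound $\mathbf{E}_2[1-s_2^{Z_2(m)}|\text{survives}]$; cleaner is to use the branching decomposition along the spine of type-$2$ particles and the monotype asymptotics \eqref{monotype}, giving a bound of order $h^\ast$-type $O((n-m)\,?)$. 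The main obstacle, and the part I would spend the most effort on, is finding the right envelope for $Z_2(m,n)$ in the regime $g^\ast(n)\ll m\leq n$: one wants to show the type-$2$ descendants surviving to time~$n$ contribute negligibly compared with $Q_{21}(n)$. I expect the argument to run: each type-$2$ particle present at time $k\in[m,n]$ contributes a subtree whose probability of reaching time~$n$ is $Q_2(n-k)\leq Q_2(n-m)$, and the expected number of such particles is bounded by $A_{21}k\cdot(\text{something})$; summing and using $Q_2(n-m)\leq Q_2(g^\ast(n))\cdot(\text{correction})$ together with part~(1) and $g^\ast(n)\ll m$ should give $o(Q_{21}(n))$. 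Getting the uniformity in $m$ and the interplay of the two slowly varying functions right is where the real care lies; the rest is routine manipulation of regularly varying asymptotics already licensed by Theorem~\ref{Zub1} and Lemma~\ref{L_regular}.
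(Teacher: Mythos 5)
Your part (1) is correct and is essentially the paper's argument: combine \eqref{defgast}, \eqref{ReprQ} for type $2$ at $g^{\ast}(n)$, and \eqref{Q21Q1}. The problems are in parts (2) and (3).

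In part (2) the bound you import from Lemma \ref{L_range} is far too weak for this regime, and the asymptotic comparison you then make runs in the wrong direction. Extending $\sum_{k=0}^{m-1}Q_1(n-k)$ to $\sum_{k= n-m}^{\infty}Q_1(k)=O\big((n-m)Q_1(n-m)\big)$ throws away the fact that the sum has only $m\ll g^{\ast}(n)\ll n$ terms; keeping that, one gets the first-moment bound $\mathbf{E}_2[Z_1(m,n)]=A_{21}\,m\,Q_1(n-m)\ll g^{\ast}(n)Q_1(n)\sim Q_{21}(n)/(\alpha_2A_{21})$ by part (1), which is exactly what the paper does and what is needed. Your replacement bound $O(nQ_1(n))$ is \emph{not} $o(Q_{21}(n))$: by part (1), $Q_{21}(n)\sim\alpha_2A_{21}g^{\ast}(n)Q_1(n)$ with $g^{\ast}(n)\ll n$, so in fact $Q_{21}(n)=o(nQ_1(n))$. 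Your chain ``$nQ_{21}(n)/(\alpha_2g^{\ast}(n))=o(Q_{21}(n))$ because $g^{\ast}(n)\ll n$'' has the comparison backwards, since $n/g^{\ast}(n)\to\infty$. (A secondary issue: the convergence of $\sum_{k\geq z}Q_1(k)$ requires $\alpha_1<1$, which Condition \eqref{VaSag0} does not guarantee.)

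In part (3) you never commit to an argument, and the first-moment route you sketch cannot work: $\mathbf{E}_2[Z_2(m,n)]=Q_2(n-m)$, which equals $1$ when $m=n$ and is not $o(Q_{21}(n))$ uniformly over $g^{\ast}(n)\ll m\leq n$; likewise the inequality $Q_2(n-m)\leq Q_2(g^{\ast}(n))$ fails once $n-m<g^{\ast}(n)$. The observation you mention only in passing, $Z_2(m,n)\leq Z_2(m)$, is already the whole proof: since $\{Z_2(m,n)>0\}\subset\{Z_2(m)>0\}$,
\begin{equation*}
\mathbf{E}_2\left[1-s_2^{Z_2(m,n)}\right]\leq\mathbf{P}_2\left(Z_2(m)>0\right)=Q_2(m)\ll Q_2(g^{\ast}(n))\sim Q_{21}(n)
\end{equation*}
by Lemma \ref{L_regular} and \eqref{defgast}. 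No spine decomposition and no appeal to \eqref{monotype} are needed.
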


\begin{proof}
First observe that according to (\ref{defgast2}), $g^{\ast
}(n)\ll n$, and that (\ref{eqgast}) is a direct consequence of~(\ref{ReprQ})
and (\ref{Q21Q1}). To prove the remaining statements note that by
the branching property
\begin{equation}
\mathbf{E}_{2}\left[ 1-s_{1}^{Z_{1}(m,n)}\right] \leq \mathbf{E}_{2}\left[
Z_{1}(m,n)\right] =\mathbf{E}_{2}\left[ Z_{1}(m)\right] Q_{1}\left(
n-m\right) =A_{21}mQ_{1}\left( n-m\right) .  \label{Neg1}
\end{equation}%
Recalling the condition $m\ll g^{\ast }(n),$ Lemma \ref{L_regular} and (\ref%
{eqgast}), we conclude that%
\begin{equation*}
mQ_{1}\left( n-m\right) \ll g^{\ast }(n)Q_{1}\left( n-g^{\ast }(n)\right)
\sim g^{\ast }(n)Q_{1}\left( n\right) \sim \frac{Q_{2}(g^{\ast }(n))}{\alpha
_{2}A_{21}}\sim \frac{Q_{21}(n)}{\alpha _{2}A_{21}}.
\end{equation*}%
This proves (\ref{Step1}). To justify (\ref{Step2}) it is sufficient to
observe that, for $g^{\ast }(n)\ll m$,
\begin{equation*}
\mathbf{E}_{2}\left[ 1-s_{2}^{Z_{2}(m,n)}\right] \leq Q_{2}(m)\ll
Q_{2}(g^{\ast }(n))\sim Q_{21}(n).
\end{equation*}%
This ends the proof of the lemma.
\end{proof}

\hspace{1cm}\newline

To end this section we mention a classical result on
branching processes. Recall Definition \ref{defprocessX} and define $\{%
\mathbf{M}=(M_{1}(t),M_{2}(t)),t\geq 0\}=\{\mathbf{\nu }%
,(g_{1}(s_{1}),\mu _{1}),(g_{21}(\mathbf{s}),\mu _{2})\}$. Let
\begin{equation*}
f_{1}(t;s_{1})=\mathbf{E}\Big[s_{1}^{M_{1}(t)}|\mathbf{M}(0)=\mathbf{e}_{1}%
\Big],\quad f_{21}(t;\mathbf{s})=\mathbf{E}\Big[%
\mathbf{s}^{\mathbf{M}(t)}|\mathbf{M}(0)=\mathbf{e}_{2}\Big].
\end{equation*}%
Then we have the following statement (see, for instance, \cite%
{Sev72}, Ch. IV, Section 3 or \cite{AN72}, Ch. V, Section 7).

\begin{lemma}
\label{lemmaathreyaney} The pair of functions $(f_{1},f_{21})$ 
is the unique solution of
\begin{equation*}
\left\{%
\begin{array}{lll}
\frac{\partial f_{21}\left( t;\mathbf{s}\right) }{\partial t} & = & \mu
_{2}\left( g_{21}\left( f_{1}\left( t;s_{1}\right) ,f_{21}\left(
t;\mathbf{s}\right) \right) -f_{21}\left( t;\mathbf{s}\right) \right) , \\
\frac{\partial f_{1}(t;s_{1})}{\partial t} & = & \mu
_{1}(g_{1}(f_{1}(t;s_{1}))-f_{1}(t;s_{1}))%
\end{array}%
\right.
\end{equation*}
with initial conditions $\ f_{1}\left( 0;s_{1}\right) =s_{1}$ and $%
f_{21}(0;\mathbf{s})=s_{2}.$
\end{lemma}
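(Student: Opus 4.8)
The plan is to condition on the first reproduction event of the ancestor, convert the branching and Markov properties into renewal-type integral equations for $f_{1}$ and $f_{21}$, differentiate these to recover the stated system, and finally prove uniqueness by a Gronwall/Picard argument. The fact used repeatedly is the branching property: starting from $k_{1}$ particles of type $1$ and $k_{2}$ of type $2$ the generating function of $\mathbf{M}(t)$ factorizes as $f_{1}(t;s_{1})^{k_{1}}f_{21}(t;\mathbf{s})^{k_{2}}$, because the subtrees rooted at the initial particles are independent and a type-$1$ ancestor produces only type-$1$ descendants (so its subtree contributes $s_{1}$ only and $f_{1}$ does not depend on $s_{2}$).

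First I would treat a single type-$1$ ancestor. It performs its first event (death together with reproduction) after a time which is exponential with parameter $\mu_{1}$; with probability $e^{-\mu_{1}t}$ no event occurs before time $t$ and $M_{1}(t)=1$, while if the event occurs at time $u\le t$ the ancestor is replaced by offspring with joint generating function $g_{1}$, each child launching an independent fresh copy of the process run for the remaining time $t-u$. Conditioning on $u$ and using the branching property gives
\begin{equation*}
f_{1}(t;s_{1})=e^{-\mu_{1}t}s_{1}+\int_{0}^{t}\mu_{1}e^{-\mu_{1}u}\,g_{1}\big(f_{1}(t-u;s_{1})\big)\,du .
\end{equation*}
The analogous decomposition for a single type-$2$ ancestor is identical, except that at its first event it is replaced by an offspring vector $(N_{1},N_{2})$ with joint generating function $g_{21}$: the $N_{1}$ type-$1$ children spawn independent type-$1$ subtrees each with generating function $f_{1}(t-u;s_{1})$, and the $N_{2}$ type-$2$ children independent subtrees each with generating function $f_{21}(t-u;\mathbf{s})$, so since $\mathbf{E}[x^{N_{1}}y^{N_{2}}]=g_{21}(x,y)$ we obtain
\begin{equation*}
f_{21}(t;\mathbf{s})=e^{-\mu_{2}t}s_{2}+\int_{0}^{t}\mu_{2}e^{-\mu_{2}u}\,g_{21}\big(f_{1}(t-u;s_{1}),f_{21}(t-u;\mathbf{s})\big)\,du .
\end{equation*}

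To pass to the differential form I would substitute $v=t-u$ and multiply by $e^{\mu_{i}t}$, obtaining for instance
\begin{equation*}
e^{\mu_{2}t}f_{21}(t;\mathbf{s})=s_{2}+\int_{0}^{t}\mu_{2}e^{\mu_{2}v}\,g_{21}\big(f_{1}(v;s_{1}),f_{21}(v;\mathbf{s})\big)\,dv,
\end{equation*}
then observe that $t\mapsto(f_{1}(t;s_{1}),f_{21}(t;\mathbf{s}))$ is continuous (read off the integral equations themselves, using $0\le g_{1},g_{21}\le 1$ and dominated convergence), so the right-hand side is continuously differentiable in $t$; differentiating and dividing by $e^{\mu_{i}t}$ yields $\partial_{t}f_{21}=\mu_{2}(g_{21}(f_{1},f_{21})-f_{21})$ and $\partial_{t}f_{1}=\mu_{1}(g_{1}(f_{1})-f_{1})$, while $f_{1}(0;s_{1})=s_{1}$ and $f_{21}(0;\mathbf{s})=s_{2}$ are immediate from $\mathbf{M}(0)=\mathbf{e}_{1}$, resp.\ $\mathbf{e}_{2}$. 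For uniqueness I would note that any $[0,1]^{2}$-valued solution of the ODE system also satisfies the integral equations above, that $g_{1}$ and $g_{21}$ are Lipschitz on $[0,1-\delta]^{2}$ for every $\delta>0$ (being power series with nonnegative coefficients they are convex in each variable, hence have bounded partial derivatives there), and that for $s_{i}<1$ the functions $f_{1},f_{21}$ stay strictly below $1$ on every compact time interval; subtracting two solutions and invoking the Lipschitz bound then gives $\|(f_{1},f_{21})(t)-(\tilde f_{1},\tilde f_{21})(t)\|\le C_{T}\int_{0}^{t}\|(f_{1},f_{21})(v)-(\tilde f_{1},\tilde f_{21})(v)\|\,dv$ on $[0,T]$, and Gronwall's inequality (equivalently a contraction/Picard argument on $C([0,T];[0,1]^{2})$ iterated over consecutive intervals) forces the solutions to agree; the case $s_{i}=1$ is trivial since then $f_{i}\equiv 1$.

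The one genuinely delicate point is the first step: making the first-event decomposition rigorous and establishing continuity of $t\mapsto(f_{1},f_{21})$, after which the differentiation and the Gronwall estimate are routine. As the statement is classical, one may alternatively just invoke Sevastyanov \cite{Sev72}, Ch.~IV, Section~3, or Athreya--Ney \cite{AN72}, Ch.~V, Section~7; the above is how one reconstructs the argument.
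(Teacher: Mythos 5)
Your argument is correct: the paper itself gives no proof of this lemma, merely citing Sevastyanov (Ch.~IV, Sec.~3) and Athreya--Ney (Ch.~V, Sec.~7), and your first-event decomposition into renewal integral equations, differentiation, and Gronwall-based uniqueness is precisely the classical argument found in those references. The only assertion left implicit, that $[0,1]^{2}$-valued solutions stay strictly below $1$ on compacts, follows in one line from $g_{1},g_{21}\leq 1$, which gives $1-f_{i}(t)\geq (1-s_{i})e^{-\mu_{i}t}$, so the local Lipschitz bound you need is indeed available.
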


\section{Proof of Theorem \protect\ref{T_ZubQ1}}

\label{proofTh1}

Throughout this section we  assume Conditions \eqref{F1}%
-\eqref{defA21} and \eqref{Q1NeglQ2}, and that $(s_1,s_2)$ belongs to $(0,1)^2$. 

\noindent \textbf{Point (0):} Let $m\ll n$. In view of (\ref{Q1NeglQ2}) and Lemma \ref{L_range}
\begin{equation*}
0\leq \mathbf{E}\left[ s_{2}^{Z_{2}(m,n)}|\mathbf{Z}(n)\neq \mathbf{0}\right]
-\mathbf{E}\left[ \mathbf{s}^{\mathbf{Z}(m,n)}|\mathbf{Z}%
(n)\neq \mathbf{0}\right] \leq \mathbf{E}\left[ 1-s_{1}^{Z_{1}(m,n)}|\mathbf{%
Z}(n)\neq \mathbf{0}\right] =o(1).
\end{equation*}%
Thus,%
\begin{equation*}
\mathbf{E}\left[\mathbf{s}^{\mathbf{Z}(m,n)}|\mathbf{Z}(n)\neq
\mathbf{0}\right] =1-\mathbf{E}\left[ 1-s_{2}^{Z_{2}(m,n)}\right]
/Q_{21}(n)+o(1).
\end{equation*}%
Further, using the branching property and Lemma \ref{L_trick}
we get%
\begin{eqnarray*}
\mathbf{E}\left[ 1-s_{2}^{Z_{2}(m,n)}\right] = Q_{2}\left( m;1-\left(
1-s_{2}\right) Q_{2}(n-m)\right) \sim \left( 1-s_{2}\right) Q_{2}(n-m)\sim
\left( 1-s_{2}\right) Q_{2}(n).
\end{eqnarray*}%
This, on account of (\ref{AsQ1Q22}) implies (\ref{Zub11}).

\noindent \textbf{Point (1):} Let $a$ be in $(0,1)$. As in the previous
case, by (\ref{Q1NeglQ2}) and Lemma \ref{L_range}, as $n\rightarrow \infty $,
\begin{equation*}
\mathbf{E}\left[ \mathbf{s}^{\mathbf{Z}(an,n)}|\mathbf{Z}%
(n)\neq \mathbf{0}\right] =\mathbf{E}\left[ s_{2}^{Z_{2}(an,n)}|\mathbf{Z}%
(n)\neq \mathbf{0}\right] +o(1).
\end{equation*}%
Moreover, in view of (\ref{monotype}) and (\ref%
{AsQ1Q22}), as $n\rightarrow \infty $%
\begin{eqnarray*}
1-\mathbf{E}\left[ s_{2}^{Z_{2}(an,n)}|\mathbf{Z}(n)\neq \mathbf{0}\right]
&=&\frac{Q_{2}\left( an;1-(1-s_{2})Q_{21}((1-a)n))\right) }{Q_{21}(n)} \\
&\sim &\frac{Q_{2}\left( an,e^{-(1-s_{2})(a/(1-a))^{1/\alpha
_{2}}Q_{2}(an))}\right) }{a^{1/\alpha _{2}}Q_{2}(an)} \\
&\sim &a^{-1/\alpha _{2}}\Big( 1+\Big( (1-s_{2})\Big( \frac{a}{1-a}\Big) %
^{1/\alpha _{2}}\Big) ^{-\alpha _{2}}\Big) ^{-1/\alpha _{2}} \\
&=&\left( a+(1-a)(1-s_{2})^{-\alpha _{2}}\right) ^{-1/\alpha _{2}}.
\end{eqnarray*}%
This completes the proof of point (1).

\noindent \textbf{Point (2):} Let $n-m=h(n),$ where $h(n)$ is an
integer-valued function such that $h^{\ast }(n)\ll h(n)\ll n.$ Similarly to
the previous point we have by (\ref{Q1NeglQ2}) and Lemma~\ref{L_range}%
\begin{equation*}
0\leq \mathbf{E}\left[ s_{2}^{Z_{2}(m,n)}|\mathbf{Z}(n)\neq \mathbf{0}\right]
-\mathbf{E}\left[ \mathbf{s}^{\mathbf{Z}(m,n)}|\mathbf{Z}%
(n)\neq \mathbf{0}\right] =o(1)
\end{equation*}%
as $n\rightarrow \infty $. Now following the line of proving point (1) and
letting
\begin{equation*}
1-s_{2}=1-\exp \left\{ -\lambda _{2}\frac{Q_{2}(n)}{Q_{21}(h(n))}\right\}
\sim \lambda _{2}\frac{Q_{2}(n)}{Q_{21}(h(n))}
\end{equation*}%
we get, using again (\ref{monotype}),
\begin{eqnarray*}
\lim_{n\rightarrow \infty }\mathbf{E}\Big[1-\mathbf{s}^{\mathbf{Z}(n-h(n),n)}%
\Big|\mathbf{Z}(n)\neq \mathbf{0}\Big]& =&\lim_{n\rightarrow \infty }\frac{%
Q_{2}(n-h(n);1-(1-s_{2})Q_{21}(h(n)))}{Q_{21}(n)} \\
&=&\lim_{n\rightarrow \infty }\frac{Q_{2}(n-h(n);1-\lambda _{2}Q_{2}(n))}{%
Q_{2}(n)}=\left( 1+\lambda _{2}^{-\alpha _{2}}\right) ^{-1/\alpha _{2}}.
\end{eqnarray*}

\noindent \textbf{Point (3):} Let $t$ be positive and $%
m=n-th^{\ast }(n)$. We need to find the limit of the right hand side of (\ref%
{expressreduced}) with
\begin{equation*}
1-s_{2}=\exp \left\{ -\lambda _{2}\frac{Q_{2}(n)}{Q_{21}(th^{\ast }(n))}%
\right\} \sim \lambda _{2}\frac{Q_{2}(n)}{Q_{21}(th^{\ast }(n))}.
\end{equation*}%
From  (\ref{BadStatement}) we know that
\begin{equation*}
\lim_{m\rightarrow \infty }\frac{Q_{21}(m;e^{-\lambda _{1}Q_{1}(h^{\ast
}(m))};e^{-\lambda _{2}Q_{2}(m)})}{Q_{21}(m)}=\left( 1+\left( \lambda
_{1}^{1-\alpha _{1}}+\lambda _{2}\right) ^{-\alpha _{2}}\right) ^{-1/\alpha
_{2}}.
\end{equation*}%
Hence, by taking a fixed $s_{1}\in \lbrack 0,1)$ and adding %
\eqref{AsQ} we deduce for $m=n-th^{\ast }(n)$ that
\begin{multline*}
\lim_{n\rightarrow \infty }\frac{Q_{21}(n-th^{\ast }(n);\mathbf{1}-(\mathbf{1%
}-\mathbf{s})\otimes \mathbf{Q}_{21}(th^{\ast }(n)))}{Q_{21}(n)} \\
=\lim_{n\rightarrow \infty }\frac{Q_{21}(n-th^{\ast }(n);1-t^{-1/\alpha
_{1}}(1-s_{1})Q_{1}(h^{\ast }(n)),1-\lambda _{2}Q_{2}(n))}{Q_{21}(n)} \\
=(1+(t^{1-1/\alpha _{1}}(1-s_{1})^{1-\alpha _{1}}+\lambda _{2})^{-\alpha
_{2}})^{-1/\alpha _{2}},
\end{multline*}
which is the statement of point (3).

\noindent \textbf{Point (4):} Take $n-m=h(n)$ with $1\ll h(n)\ll h^{\ast }(n)
$,
\begin{equation*}
s_{1}=\exp \left\{ -\lambda _{1}\frac{Q_{1}(h^{\ast }(n))}{Q_{1}(h(n))}%
\right\} \quad \text{and}\quad s_{2}=\exp \left\{ -\lambda _{2}\frac{Q_{2}(n)%
}{Q_{21}(h(n))}\right\} .
\end{equation*}%
Using (\ref{BadStatement}) we get
\begin{eqnarray*}
\lim_{n\rightarrow \infty }\frac{Q_{21}(n-h(n);\mathbf{1}-(\mathbf{1}-%
\mathbf{s})\otimes \mathbf{Q}_{21}(h(n)))}{Q_{21}(n)} &=&\lim_{n\rightarrow
\infty }\frac{Q_{21}(n-h(n);e^{-\lambda _{1}Q_{1}(h^{\ast }(n))},e^{-\lambda
_{2}Q_{2}(n)})}{Q_{21}(n-h(n))} \\
&=&\left( 1+(\lambda _{1}^{1-\alpha _{1}}+\lambda _{2})^{-\alpha
_{2}}\right) ^{-1/\alpha _{2}}.
\end{eqnarray*}

This ends the proof of Theorem \ref{T_ZubQ1}.

\section{Proof of Theorem \protect\ref{T_zubQ3}}

\label{proofTh3+Cor4}

Throughout this section we assume that Conditions \eqref{F1}-%
\eqref{defA21} and \eqref{VaSag0} are in force and that $(s_{1},s_{2})$ belongs to $(0,1)^{2}$.

\noindent \textbf{Point (0):} Let $m\ll g^{\ast }(n)$ with $%
g^{\ast }(n)$ specified by~\eqref{defgast}. Applying (\ref{Step1}) we get, as $n\rightarrow \infty $%
\begin{equation*}
0\leq \mathbf{E}\left[ 1-\mathbf{s}^{\mathbf{Z}(m,n)}\right] -%
\mathbf{E}\left[ 1-s_{2}^{Z_{2}(m,n)}\right] \leq \mathbf{E}\left[
1-s_{1}^{Z_{1}(m,n)}\right] =o\left( Q_{21}(n)\right) .
\end{equation*}%
Further, recalling \eqref{defgast} and Lemma \ref{L_trick} we
see that%
\begin{eqnarray*}
\mathbf{E}\left[ 1-s_{2}^{Z_{2}(m,n)}\right] & = & Q_{2}\left(
m;1-(1-s_{2})Q_{21}(n-m))\right) \\
&\sim & Q_{2}\left( m;1-(1-s_{2})Q_{2}(g^{\ast }(n))\right) \sim
(1-s_{2})Q_{2}(g^{\ast }(n))\sim (1-s_{2})Q_{21}(n).
\end{eqnarray*}
Thus,
\begin{eqnarray*}
\lim_{n\rightarrow \infty }\mathbf{E}[1-s_{2}^{Z_{2}(m,n)}|\mathbf{Z}(n)\neq
\mathbf{0}] =\lim_{n\rightarrow \infty }\frac{\mathbf{E}[1-
\mathbf{s}^{\mathbf{Z}(m,n)}]}{Q_{21}(n)} =\lim_{n\rightarrow \infty }\frac{%
\mathbf{E}[1-s_{2}^{Z_{2}(m,n)}]}{Q_{21}(n)}=1-s_{2},
\end{eqnarray*}
which proves point (0).

\noindent \textbf{Point (1):} In view of (\ref{AsQ}) and 
\eqref{defgast}, for any $t>0$
\begin{equation*}
Q_{21}(n-tg^{\ast }(n))\sim Q_{21}(n)\sim Q_{2}(g^{\ast }(n))\sim
t^{1/\alpha _{2}}Q_{2}(tg^{\ast }(n))
\end{equation*}%
and by \eqref{eqgast} and \eqref{AsQ}
\begin{equation*}
Q_{1}(n-tg^{\ast }(n))\sim Q_{1}(n)\sim \frac{Q_{2}(g^{\ast }(n))}{\alpha
_{2}A_{21}g^{\ast }(n)}\sim \frac{t^{1/\alpha _{2}+1}}{\alpha _{2}A_{21}}%
\frac{Q_{2}(tg^{\ast }(n))}{tg^{\ast }(n)}.
\end{equation*}

Then, by using \eqref{expressreduced}, \eqref{defgast} and 
\eqref{T_sagcom} with $n$ replaced by $tg^{\ast }(n)$ we obtain
\begin{multline*}
\mathbf{E}_{2}[1-\mathbf{s}^{\mathbf{Z}(tg^{\ast }(n),n)}|\mathbf{Z}(n)\neq
\mathbf{0}] = \frac{Q_{21}(tg^\ast(n),1-(\mathbf{1}-\mathbf{s}%
)\otimes \mathbf{Q}_{21}(n-tg^{\ast }(n)) )}{Q_{21}(n)}  \\
\sim \frac{\mathbf{E}_{2}\left[ 1-\exp \left\{ -(1-s_{1})\frac{t^{1/\alpha
_{2}+1}}{\alpha _{2}A_{21}}\frac{Q_{2}(tg^{\ast }(n))}{tg^{\ast }(n)}%
Z_{1}(tg^{\ast }(n))-(1-s_{2})t^{1/\alpha _{2}}Q_{2}(tg^{\ast
}(n))Z_{2}(tg^{\ast }(n))\right\} \right] }{t^{1/\alpha _{2}}Q_{2}(tg^{\ast
}(n))} \\
\sim t^{-1/\alpha _{2}}\phi \Big((1-s_{1})\frac{t^{1/\alpha _{2}+1}}{\alpha
_{2}A_{21}},(1-s_{2})t^{1/\alpha _{2}}\Big)
\end{multline*}%
justifying the statement of point (1).

\noindent \textbf{Point (3):} If $g^{\ast }(n)\ll m\leq n$ then, according
to (\ref{Step2})
\begin{equation*}
\left\vert \mathbf{E}_{2}\left[ \left( 1^{Z_{2}(m,n)}-0^{Z_{2}(m,n)}\right)
s_{1}^{Z_{1}(m,n)}\right] \right\vert \leq \mathbf{E}_{2}\left[
1-0^{Z_{2}(m,n)}\right] =o\left( Q_{21}(n)\right) .
\end{equation*}%
Hence we obtain the same limit for \eqref{T_zubQ3point4} independently of
the choice of $s_{2}$. We choose $s_{2}$ satisfying
\begin{equation*}
(1-s_{2})Q_{21}((1-a)n)\sim Q_{21}\left( \frac{(1-a)n}{(1-s_{1})^{\alpha
_{1}}}\right) \sim (1-s_{1})^{1/(1+\alpha _{2})}(1-a)^{-1/(\alpha
_{1}(1+\alpha _{2}))}Q_{21}(n),
\end{equation*}%
where we have applied \eqref{SmallQ2}. Then, using the
branching property, \eqref{AsQ} and \eqref{SmallQ2}, we
get
\begin{multline*}
\frac{Q_{21}(an;\mathbf{1}-(\mathbf{1}-\mathbf{s})\otimes \mathbf{Q}%
_{21}((1-a)n))}{Q_{21}(n)} \sim \frac{Q_{21}(an;\mathbf{1}-\mathbf{Q}%
_{21}((1-a)n/(1-s_{1})^{\alpha _{1}}))}{Q_{21}(n)} \\
=\frac{Q_{21}(an+(1-a)n/(1-s_{1})^{\alpha _{1}}))}{Q_{21}(n)}\sim \Big(a+%
\frac{(1-a)}{(1-s_{1})^{\alpha _{1}}}\Big)^{-1/(\alpha _{1}(1+\alpha _{2}))}.
\end{multline*}%
Adding \eqref{expressreduced}, this proves point (3).

\noindent \textbf{Point (2):} We know from point (1) that for every $t>0$,
\begin{equation*}
\lim_{n\rightarrow \infty }\mathbf{P}\left( Z_{2}(tg^{\ast }(n),n)=0|\mathbf{%
Z}(n)\neq \mathbf{0}\right) =1-t^{-1/\alpha _{2}}\phi (0,t^{1/\alpha _{2}}).
\end{equation*}%
Setting $q(x)=\phi (0,x)/x$, $x>0$, we deduce by applying \eqref{defphi}
that $q(x)$ satisfies
\begin{equation*}
q^{\prime }(x)=-x^{\alpha _{2}-1}q^{1+\alpha _{2}}(x),
\end{equation*}%
with initial condition $q(0)=\lim_{x\downarrow 0}q(x)=1$. This equation has
an explicit solution and we obtain
\begin{equation}
\lim_{n\rightarrow \infty }\mathbf{P}_{2}\left( Z_{2}(tg^{\ast }(n),n)=0|%
\mathbf{Z}(n)\neq \mathbf{0}\right) =1-q(t^{1/\alpha
_{2}})=1-(1+t)^{-1/\alpha _{2}}.  \label{dmom}
\end{equation}%
In particular,
\begin{equation}
\lim_{t\rightarrow \infty }\lim_{n\rightarrow \infty }\mathbf{P}_{2}\left(
Z_{2}(tg^{\ast }(n),n)=0|\mathbf{Z}(n)\neq \mathbf{0}\right) =1,
\label{Z2mnnull}
\end{equation}%
and for  $m\gg g^{\ast }(n)$,
\begin{equation}
\lim_{m,n\rightarrow \infty }\mathbf{P}_{2}\left( Z_{2}(m,n)=0|\mathbf{Z}%
(n)\neq \mathbf{0}\right) =1.  \label{Z2mnnull2}
\end{equation}%
We now take $r(x)=\phi (x,0)/x^{1/(1+\alpha _{2})},x>0$. Then from point (1)
we have
\begin{equation*}
\lim_{n\rightarrow \infty }\mathbf{E}_{2}\left[ s_{1}^{Z_{1}((\alpha
_{2}A_{21}x/(1-s_{1}))^{\alpha _{2}/(1+\alpha _{2})}g^{\ast }(n),n)}|\mathbf{%
Z}(n)\neq \mathbf{0}\right] =1-\Big(\frac{1-s_{1}}{\alpha _{2}A_{21}}\Big)%
^{1/(\alpha _{2}+1)}r(x).
\end{equation*}%
As $Z_{1}(m,n)$ is non-decreasing with respect to $m$ and we are 
looking for non-negative quantities, it follows that $r$ is a non-negative,
non-decreasing and bounded function. Hence it admits a non-negative limit at
infinity. Moreover from \eqref{Z2mnnull} we see that
\begin{equation*}
\lim_{t\rightarrow \infty }\lim_{n\rightarrow \infty }\mathbf{E}\left[
0^{Z_{1}((\alpha _{2}A_{21}t)^{\alpha _{2}/(1+\alpha _{2})}g^{\ast }(n),n)}|%
\mathbf{Z}(n)\neq \mathbf{0}\right] =0
\end{equation*}%
in view of $Z_{1}(m,n)+Z_{2}(m,n)\geq 1$. Hence $\lim_{x\rightarrow \infty
}r(x)=\left( \alpha _{2}A_{21}\right) ^{1/(\alpha _{2}+1)}$ and
\begin{equation*}
\lim_{t\rightarrow \infty }\lim_{n\rightarrow \infty }\mathbf{E}%
[s_{1}^{Z_{1}(tg^{\ast }(n),n)}|\mathbf{Z}(n)\neq \mathbf{0}%
]=1-(1-s_{1})^{1/(1+\alpha _{2})}.
\end{equation*}%
From point (3) we also know that
\begin{equation*}
\lim_{a\rightarrow 0}\lim_{n\rightarrow \infty }\mathbf{E}%
[s_{1}^{Z_{1}(an,n)}|\mathbf{Z}(n)\neq \mathbf{0}]=1-(1-s_{1})^{1/(1+\alpha
_{2})}.
\end{equation*}%
As $Z_{1}(m,n)$ is non-decreasing with respect to $m$, we end the proof of
point (2) with \eqref{Z2mnnull2}.

\noindent \textbf{Point (4):} The proof is similar to that of point (3) and
we give less details. In particular, the choice of $s_{2}$ has no influence
on the limit, as there is no type $2$ individuals anymore in the reduced
process. Let $n-m=h(n)$ with $1\ll h(n)\ll n$. Set%
\begin{equation*}
1-s_{1}=1-\exp \left\{ -\lambda _{1}\frac{Q_{1}(n)}{Q_{1}(h(n))}\right\}
\sim \lambda _{1}\frac{Q_{1}(n)}{Q_{1}(h(n))}
\end{equation*}%
and choose $s_{2}$ such that
\begin{equation*}
(1-s_{2})Q_{21}(h(n))\sim Q_{21}(\lambda _{1}^{-\alpha _{1}}n).
\end{equation*}%
Then by \eqref{expressreduced}, the branching property, %
\eqref{AsQ} and \eqref{SmallQ2} we conclude that
\begin{multline*}
\mathbf{E}[1-\mathbf{s}^{\mathbf{Z}(m,n)}|\mathbf{Z}(n)\neq \mathbf{0}]=%
\frac{Q_{21}(n-h(n);\mathbf{1}-(\mathbf{1}-\mathbf{s})\otimes \mathbf{Q}%
_{21}(h(n)))}{Q_{21}(n)} \\
\sim \frac{Q_{21}(n-h(n);\mathbf{1}-\mathbf{Q}_{21}(\lambda _{1}^{-\alpha
_{1}}n))}{Q_{21}(n)} =\frac{Q_{21}((1+\lambda _{1}^{-\alpha _{1}})n-h(n))}{%
Q_{21}(n)}\sim (1+\lambda _{1}^{-\alpha _{1}})^{-\frac{1}{\alpha
_{1}(1+\alpha _{2})}},
\end{multline*}%
which ends the proof of Theorem \ref{T_zubQ3}.

\section{Proof of Theorem \protect\ref{T_VatSag}}

\label{proofTh5}

Throughout this section we assume that Conditions \eqref{F1}-%
\eqref{defA21} and \eqref{VaSag} are satisfied and that $(s_{1},s_{2})$ {%
belongs to $(0,1)^{2}$.

\noindent \textbf{Point (0):} Using (\ref{Neg1}), Condition %
\eqref{VaSag} and \eqref{surviv} we obtain for $m\ll n$
\begin{equation*}
\mathbf{E}\left[ 1-s_{1}^{Z_{1}(m,n)}\right] \leq A_{21}mQ_{1}\left(
n-m\right) \ll nQ_{1}\left( n\right) \sim \sigma Q_{2}(n)\sim \sigma
b^{-1}Q_{21}(n),
\end{equation*}%
implying%
\begin{equation*}
0\leq \mathbf{E}\left[1-\mathbf{s}^{\mathbf{Z}(m,n)}%
\right] -\mathbf{E}\left[ 1-s_{2}^{Z_{2}(m,n)}\right]\leq \mathbf{E}\left[
1-s_{1}^{Z_{1}(m,n)}\right] =o\left( Q_{21}(n)\right) .
\end{equation*}%
Further, by Lemma \ref{L_trick} and (\ref{surviv})
\begin{equation*}
\mathbf{E}\left[ 1-s_{2}^{Z_{2}(m,n)}\right] =Q_{2}\left(
m;1-(1-s_{2})Q_{21}(n-m)\right) \sim (1-s_{2})Q_{21}(n),
\end{equation*}%
giving (\ref{Negl2}).

\noindent \textbf{Point (1):} The result is a direct consequence
of \eqref{equivvatsag}, \eqref{expressreduced}, and of the equivalence $%
Q_1(n) \sim \sigma Q_{21}(n)/b $ under Condition \eqref{VaSag}.

\noindent \textbf{Point (2):} Again it is a direct consequence of
expressions \eqref{equivvatsag} and \eqref{expressreduced}. As it is very
similar to the proofs of points (3) and (4) of Theorem \ref{T_zubQ3}, we do
not give the details.

\section{Convergence of finite dimensional distributions\label{S_finiteDimen}}

In this section we study the limiting behavior of the finite-dimensional
distributions of the properly scaled reduced process $\left\{ \mathbf{Z}%
(m,n),0\leq m\leq n\right\} $. To simplify notation we let

\begin{equation*}
J_{i}^{\,(m,n)}(\mathbf{s})=\mathbf{E}\left[ \mathbf{s}^{\mathbf{Z}(m,n)}|%
\mathbf{Z}(n)\neq \mathbf{0},\mathbf{Z}(0)=\mathbf{e}_{i}\right] , \quad
i=1,2
\end{equation*}%
and, given $0\leq k_{0}<k_{1}<...<k_{p}\leq n$ set $\mathbf{k}%
=(k_{0},k_{1},...,k_{p})$. Putting $\mathbf{S}_{l}=\left(
s_{1l},s_{2l}\right) $ denote%
\begin{eqnarray*}
\hat{J}_{2}^{(k_{0},k_{1},...,k_{p},n)}(\mathbf{S}_{1},...,\mathbf{S}_{p})
&=&\hat{J}_{2}^{(\mathbf{k},n)}(\mathbf{S}_{1},...,\mathbf{S}_{p})=\mathbf{E}%
\left[ \prod_{l=1}^{p}\mathbf{S}_{l}^{\mathbf{Z}(k_{l},n)}\Big|\mathbf{Z}%
(k_{0},n)=\mathbf{e}_{2}\right] , \\
\hat{J}_{1}^{(k_{0},k_{1},...,k_{p},n)}(s_{11},...,s_{1p}) &=&\hat{J}_{1}^{(%
\mathbf{k},n)}(s_{11},...,s_{1p})=\mathbf{E}\left[
\prod_{l=1}^{p}s_{1l}^{Z_{1}(k_{l},n)}\Big|\mathbf{Z}(k_{0},n)=\mathbf{e}_{1}%
\right],
\end{eqnarray*}%
and%
\begin{equation*}
\mathbf{\hat{J}}^{(\mathbf{k},n)}(\mathbf{S}_{1},...,\mathbf{S}_{p})=\left(
\hat{J}_{1}^{(\mathbf{k},n)}(s_{11},...,s_{1p}),\hat{J}_{2}^{(\mathbf{k},n)}(%
\mathbf{S}_{1},...,\mathbf{S}_{p})\right) .
\end{equation*}

The next statement is a simple observation following from Corollary 2 in~%
\cite{VD06}.

\begin{lemma}
\label{L_convol}For any $0\leq k_{0}<k_{1}<...<k_{p}\leq n$ we have%
\begin{multline*}
\hat{J}_{2}^{(\mathbf{k},n)}(\mathbf{S}_{1},...,\mathbf{S}_{p})=\hat{J}%
_{2}^{(k_{0},k_{1},n)}\left( \mathbf{S}_{1}\otimes \mathbf{\hat{J}}%
^{(k_{1},k_{2},...,k_{p},n)}(\mathbf{S}_{2},...,\mathbf{S}_{p})\right)  \\
=J_{2}^{(k_{1}-k_{0},n-k_{0})}\left( \mathbf{S}_{1}\otimes \mathbf{J}%
^{\left( k_{2}-k_{1},n-k_{1}\right) }\left( \mathbf{S}_{2}\otimes \left(
...\left( \mathbf{S}_{p-1}\otimes \mathbf{J}^{\left(
k_{p}-k_{p-1},n-k_{p-1}\right) }(\mathbf{S}_{p})\right) ...\right) \right)
\right) .
\end{multline*}%
In particular, if $\mathbf{k}=(0,k_{1},k_{2}),$ then%
\begin{equation*}
\hat{J}_{2}^{\,(0,k_{1},k_{2},n)}(\mathbf{S}_{1},\mathbf{S}_{2})=J_{2}^{\,(k_{1},n)}(\mathbf{S}_{1}\otimes \mathbf{J}^{\left( 
k_{2}-k_{1},n-k_{1}\right) }(\mathbf{S}_{2})).
\end{equation*}
\end{lemma}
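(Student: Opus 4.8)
The plan is to derive both identities from two facts — the \emph{branching property} of the reduced process and the time homogeneity of the two-type process — and then obtain the nested formula by induction on the number $p$ of intermediate time points. Throughout, $n$ and $k_0<n$ are fixed, so $\{\mathbf{Z}(k_0,n)=\mathbf{e}_2\}$ has probability $Q_{21}(n-k_0)>0$ and all the conditional expectations below are well defined. The essential input is Corollary~2 of \cite{VD06}, which I would use as follows: conditionally on the vector $\mathbf{Z}(k_1,n)=(Z_1(k_1,n),Z_2(k_1,n))$ and on $\{\mathbf{Z}(k_0,n)=\mathbf{e}_2\}$, the $Z_1(k_1,n)+Z_2(k_1,n)$ subfamilies issued from the reduced particles alive at time $k_1$ are mutually independent, and the subfamily issued from a reduced particle of type $j$ is distributed as the reduced process started from $\mathbf{e}_j$ at time $k_1$ and conditioned to be nonempty at time $n$.

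\textbf{Step 1 (one-step decomposition).} By the tower property,
\begin{equation*}
\hat{J}_2^{(\mathbf{k},n)}(\mathbf{S}_1,\dots,\mathbf{S}_p)=\mathbf{E}\Big[\mathbf{S}_1^{\mathbf{Z}(k_1,n)}\,\mathbf{E}\Big[\prod_{l=2}^{p}\mathbf{S}_l^{\mathbf{Z}(k_l,n)}\,\Big|\,\mathbf{Z}(k_1,n)\Big]\,\Big|\,\mathbf{Z}(k_0,n)=\mathbf{e}_2\Big].
\end{equation*}
By the branching property the inner conditional expectation factorizes as $\big(\hat{J}_1^{(k_1,\dots,k_p,n)}(s_{12},\dots,s_{1p})\big)^{Z_1(k_1,n)}\big(\hat{J}_2^{(k_1,\dots,k_p,n)}(\mathbf{S}_2,\dots,\mathbf{S}_p)\big)^{Z_2(k_1,n)}$. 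Multiplying by $\mathbf{S}_1^{\mathbf{Z}(k_1,n)}=s_{11}^{Z_1(k_1,n)}s_{21}^{Z_2(k_1,n)}$ turns the integrand into $\big(\mathbf{S}_1\otimes\mathbf{\hat{J}}^{(k_1,\dots,k_p,n)}(\mathbf{S}_2,\dots,\mathbf{S}_p)\big)^{\mathbf{Z}(k_1,n)}$, and taking the outer expectation given $\{\mathbf{Z}(k_0,n)=\mathbf{e}_2\}$ recognizes $\hat{J}_2^{(k_0,k_1,n)}$ evaluated at that argument. This is the first asserted equality.

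\textbf{Step 2 (time homogeneity) and Step 3 (induction).} A single particle of type $i$ present at time $k_0$ and conditioned to have descendants at time $n$ starts a fresh copy of the process, so after the shift $r\mapsto k_0+r$ the law of $\{\mathbf{Z}(k_0+r,n)\}$ given $\{\mathbf{Z}(k_0,n)=\mathbf{e}_i\}$ coincides with the law of $\{\mathbf{Z}(r,n-k_0)\}$ given $\{\mathbf{Z}(0)=\mathbf{e}_i,\ \mathbf{Z}(n-k_0)\neq\mathbf{0}\}$; for $i=1$ the reduced process is monotype. Hence $\hat{J}_i^{(k_0,k_1,n)}=J_i^{(k_1-k_0,\,n-k_0)}$ componentwise, which in particular settles the base case $p=1$. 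For the inductive step, assuming the formula for $p-1$ intermediate times, I would apply Step~1 to strip off $k_1$, replace $\hat{J}_i^{(k_0,k_1,n)}$ by $J_i^{(k_1-k_0,n-k_0)}$, and invoke the induction hypothesis on the $(p-1)$-fold object $\mathbf{\hat{J}}^{(k_1,\dots,k_p,n)}(\mathbf{S}_2,\dots,\mathbf{S}_p)$; assembling the pieces produces the stated nested expression, and the displayed special case is this identity with $p=2$ and $k_0=0$.

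The only genuinely delicate point is the branching property used in Step~1: the statement that, given the reduced population at an intermediate time, the subtrees hanging from the reduced particles are independent reduced processes carrying the correct conditional laws. Once this is granted in the form of Corollary~2 of \cite{VD06}, everything else is bookkeeping with the $\otimes$-notation and with homogeneity of the two-type process in time; no estimates are required.
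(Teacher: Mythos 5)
Your proof is correct and takes essentially the same route the paper intends: the paper states Lemma \ref{L_convol} without proof, calling it ``a simple observation following from Corollary 2 in \cite{VD06}'', and your Steps 1--3 (conditional independence of the subtrees of the reduced process at an intermediate time, time homogeneity turning $\hat{J}_i^{(k_0,k_1,n)}$ into $J_i^{(k_1-k_0,\,n-k_0)}$, and induction on $p$) are precisely the bookkeeping that remark leaves implicit.
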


\begin{remark}
It follows from Lemma \ref{L_convol} that the process $\left\{ \mathbf{Z}%
(m,n),0\leq m\leq n|\,\mathbf{Z}(n)\neq \mathbf{0}\right\} $ is, for a given
$n$, an inhomogeneous discrete-time Markov branching process in $m$.
\end{remark}

Observe that for $\mathbf{k}=\left( k_{0},k_{1}\right) $
\begin{equation}
\mathbf{E}\Big[\mathbf{s}^{\mathbf{Z}(k_{1},n)}|\mathbf{Z}%
(k_{0},n)=(r_{1},r_{2})\Big]=\left( \hat{J}_{1}^{\,(\mathbf{k}%
,n)}(s_{1})\right) ^{r_{1}}\left( \hat{J}_{2}^{\,(\mathbf{k}%
,n)}(s_{1},s_{2})\right) ^{r_{2}} . \label{decompoJ1J2}
\end{equation}%
Hence to investigate this quantity we can use the equalities
\begin{equation}
\hat{J}_{1}^{\,(\mathbf{k},n)}(s_{1})=1-\frac{%
Q_{1}(k_{1}-k_{0},1-(1-s_{1})Q_{1}(n-k_{1}))}{Q_{1}(n-k_{0})}
\label{expreZ1k1}
\end{equation}%
and
\begin{equation}
\hat{J}_{2}^{\,(\mathbf{k},n)}(\mathbf{s})=1-\frac{%
Q_{21}(k_{1}-k_{0};\mathbf{1}-(\mathbf{1}-\mathbf{s})\otimes \mathbf{Q}%
_{21}(n-k_{1}))}{Q_{21}(n-k_{0})}.  \label{forZ1k1Z2k2}
\end{equation}

We are now able to prove convergence of the finite-dimensional distributions
of the prelimiting processes to the corresponding finite-dimensional
distributions of the limiting processes in Theorems \ref{T_ZubQ1micro} to %
\ref{T_VatSagmicro}.

\subsection{Convergence of finite-dimensional distributions in Theorem
\protect\ref{T_ZubQ1micro}}
\hspace{1cm}\\
\noindent \textbf{Point (1):} For $t\geq 0$ introduce the function
\begin{eqnarray*}
f_{21}(t;s_{2}) =1-\Big(1-e^{-t}+e^{-t}(1-s_{2})^{-\alpha _{2}}\Big)^{-\frac{%
1}{\alpha _{2}}} =\lim_{n\rightarrow \infty }\mathbf{E}_{2}\left[ \mathbf{s}^{\mathbf{Z}((1-e^{-t})n,n)}|\mathbf{Z}(n)\neq \mathbf{%
0}\right] ,
\end{eqnarray*}%
whose first order derivative with respect to $t$ is
\begin{equation*}
\frac{\partial f_{21}(t;s_{2})}{\partial t}=\frac{1}{\alpha _{2}}\Big(%
e^{-t}-e^{-t}(1-s_{2})^{-\alpha _{2}}\Big)\Big(1-e^{-t}+e^{-t}(1-s_{2})^{-%
\alpha _{2}}\Big)^{-1-1/\alpha _{1}}.
\end{equation*}
Recalling the definition of $g_{2}^{(X)}$ in \eqref{Defg2} we get
\begin{multline*}
g_{2}^{(X)}(f_{21}(t;s_{2}))-f_{21}(t;s_{2}) =\frac{1}{\alpha _{2}}\Big(%
(1-f_{21}(t;s_{2}))^{1+\alpha _{2}}-1+(1+\alpha _{2})f_{21}(t;s_{2})\Big)%
-f_{21}(t;s_{2}) \\
=\frac{1}{\alpha _{2}}\Big((1-e^{-t}+e^{-t}(1-s_{2})^{-\alpha
_{2}})^{-1-1/\alpha _{2}}-(1-e^{-t}+e^{-t}(1-s_{2})^{-\alpha
_{2}})^{-1/\alpha _{2}}\Big) \\
=\frac{1}{\alpha _{2}}(1-e^{-t}+e^{-t}(1-s_{2})^{-\alpha _{2}})^{-1-1/\alpha
_{2}}\Big(1-(1-e^{-t}+e^{-t}(1-s_{2})^{-\alpha _{2}})\Big)=\frac{\partial
f_{21}(t;s_{2})}{\partial t}.
\end{multline*}
Applying Lemma \ref{lemmaathreyaney} we conclude that for $%
s_1,s_2 \in [0,1]$
\begin{equation*}
\underset{n \to \infty}{\lim}J_2^{(0,(1-e^{-t})n,n)}(\mathbf{s})=%
f_{21}(t;s_{2})=\mathbf{E}\left[ s_{2}^{X(t)}|X(0)=1\right],
\end{equation*}
where $X(\cdot )$ has been specified in Definition \ref{defprocessX}.
This, along with Theorem \ref{T_ZubQ1}(1) proves convergence of
one-dimensional distributions of the prelimiting process to that of $%
X(\cdot) $.

In view of Lemma \ref{L_convol}, to check the needed convergence of
arbitrary finite-dimensional distributions it is sufficient to consider the
case $p=2$. Let
\begin{equation*}
k_{1}=\left[ (1-e^{-t_{1}})n\right] <\left[ (1-e^{-t_{2}})n\right]=k_2 ,\quad 0<t_{1}<t_{2}.
\end{equation*}%
By the previous estimates, Lemma \ref{L_convol}, Theorem \ref{T_ZubQ1}(1)
and the uniform continuity in $t_{1}$ and $t_{2}$ of the functions involved,
we get
\begin{multline*}
\lim_{n\rightarrow \infty }\hat{J}_{2}^{\,(\mathbf{k},n)}(\mathbf{S}_{1},
\mathbf{S}_{2}) =\lim_{n\rightarrow \infty }J_{2}^{\,(\Delta _{1},n)}(
\mathbf{S}_{1}\otimes \mathbf{J}^{\left( \Delta _{2},n-k_{1}\right) }(
\mathbf{S}_{2})) =f_{21}\left( t_{1};s_{21}\lim_{n\rightarrow \infty
}J_{2}^{\left( \Delta _{2},n-k_{1}\right) }(\mathbf{S}_{2})\right) \\
=f_{21}\left( t_{1};s_{21}f_{21}\left( t_{2}-t_{1};s_{22}\right) \right) =%
\mathbf{E}\left[ s_{21}^{X(t_{1})}s_{22}^{X(t_{2})}|X(0)=1\right]
\end{multline*}
as desired.

\noindent \textbf{Point (2):} Convergence of one-dimensional distributions
has been established in Theorem \ref{T_ZubQ1}(2). In particular,
there is no type $1$ individuals in the limit process. As before, to check
the needed convergence of arbitrary finite-dimensional distributions it is
sufficient to consider the case $p=2$ and to calculate the transition
density of the limiting process for type $2$ individuals. Let $h
$ be an integer valued function such that $h^{\ast }(n)\ll h(n)\ll n$, and $%
k_{0}=n-t_{0}h(n)<k_{1}=n-t_{1}h(n)$. Take $s_2$ satisfying, for a positive $%
\lambda $
\begin{equation*}
1-s_{2}=1-\exp \left\{ -\lambda \frac{Q_{2}(n)}{Q_{21}(t_{2}h(n))%
}\right\} \sim \lambda \frac{Q_{2}(n)}{Q_{21}(t_{2}h(n))}.
\end{equation*}%
Let, further,
\begin{equation*}
Z_{2}(k_{1},n)=y\frac{Q_{21}(t_{1}h(n))}{Q_{2}(n)}
\end{equation*}%
(recall that we agree to consider the equalities of the form $Z\left(
x,n\right) =y$ as $Z\left( \left[ x\right] ,n\right) =\left[ y\right] $).
Then from
\eqref{expreZ1k1} and the fact that there is no type
$1$ individuals we get for $s_1 $ in $[0,1]$,
\begin{equation*}
1-\hat{J}_{2}^{\,(\mathbf{k},n)}(\mathbf{s})\sim \frac{1-\mathbf{%
E}_{2}\Big[e^{-\lambda Q_{2}(n)Z_{2}((t_{0}-t_{1})h(n))}\Big]}{%
Q_{21}(t_{0}h(n))}\sim \frac{\lambda Q_{2}(n)}{Q_{21}(t_{0}h(n))},
\end{equation*}%
where we applied Lemma \ref{L_trick}. Hence we deduce that
\begin{equation*}
\log \mathbf{E}\left[ \mathbf{s}^{\mathbf{Z}(k_{1},n)}|\mathbf{Z}%
(k_{0},n)=y\frac{Q_{21}(t_{0}h(n))}{Q_{2}(n)}\mathbf{e}_{2}\right] =y\frac{%
Q_{21}(t_{0}h(n))}{Q_{2}(n)}\log \hat{J}_{2}^{\,(\mathbf{k},n)}(\mathbf{s}%
)\sim -\lambda y.
\end{equation*}

\noindent \textbf{Point (3):} Once again, it is sufficient to calculate the
transition density of the limit process only. Let $k_{0}=n-t_{0}h^{\ast
}(n)<k_{1}=n-t_{1}h^{\ast }(n)$, where $h^{\ast }(n)$ has been defined in (%
\ref{hstar}). Applying first \eqref{expreZ1k1} we get
\begin{eqnarray*}
1-\hat{J}_{1}^{(\mathbf{k},n)}(s_{1})&\sim &\frac{1-\mathbf{E}_{1}\Big[ %
e^{-(1-s_{1})Q_{1}(t_1h^{\ast }(n))Z_{1}((t_{0}-t_{1})h^{\ast
}(n))}\Big]}{ Q_{1}(t_{0}h^{\ast }(n))} \\
&\sim &\frac{1-\mathbf{E}_{1}\Big[e^{-(1-s_{1})((t_{0}-t_{1})/t_1%
)^{1/\alpha _{1}}Q_{1}(\left( t_{0}-t_{1}\right) h^{\ast
}(n))Z_{1}((t_{0}-t_{1})h^{\ast }(n))}\Big]}{((t_{0}-t_{1})/t_{0})^{1/\alpha
_{1}}Q_{1}((t_{0}-t_{1})h^{\ast }(n))} \\
&\sim &\Big(\frac{t_{0}}{t_{0}-t_{1}}\Big)^{\frac{1}{\alpha _{1}}}\Big[1+%
\frac{ t_1}{t_{0}-t_{1}}(1-s_{1})^{-\alpha _{1}}\Big]^{-\frac{1%
}{\alpha _{1}}} =\left( 1-\frac{t_{1}}{t_{0}}+\frac{t_{1}}{t_{0}}%
(1-s_{1})^{-\alpha _{1}}\right) ^{-\frac{1}{\alpha _{1}}}.
\end{eqnarray*}

Introduce now
\begin{equation*}
s_{2}=\exp \left\{ -\lambda _{2}\frac{Q_{2}(n)}{Q_{21}(t_{1}h^{\ast }(n))}%
\right\}.
\end{equation*}
Then, by Markov property and \eqref{expressreduced} we get
\begin{eqnarray*}
1-\hat{J}_{2}^{(\mathbf{k},n)}(\mathbf{s}) &=&1-\mathbf{E}_{2}%
\left[ \mathbf{s}^{\mathbf{Z}(\left( t_{0}-t_{1}\right) h^{\ast
}(n),t_{0}h^{\ast }(n))}|\mathbf{Z}\left( t_{0}h^{\ast }(n)\right) \neq
\mathbf{0}\right] \\
&=&\frac{Q_{21}(\left( t_{0}-t_{1}\right) h^{\ast }(n);\mathbf{1}-(\mathbf{1}%
-\mathbf{s})\otimes \mathbf{Q}_{21}(t_{1}h^{\ast }(n)))}{Q_{21}(t_{0}h^{\ast
}(n))} \\
&\sim& \frac{Q_{21}(\left( t_{0}-t_{1}\right) h^{\ast
}(n);1-(1-s_{1})t_{1}^{-1/\alpha _{1}}Q_{1}(h^{\ast }(n)),1-\lambda
_{2}Q_{2}(n))}{Q_{21}(t_{0}h^{\ast }(n))} \\
&\sim & \frac{Q_2(n)}{Q_{21}(t_{0}h^{\ast}(n))}\Big[ %
(1-s_{1})^{1-\alpha _{1}}t_{1}^{1-\frac{1}{\alpha_{1}}}\Big( 1-\Big( 1+\Big(
\frac{t_{0}}{t_{1}}-1\Big) (1-s_{1})^{\alpha _{1}}\Big) ^{1-\frac{1}{\alpha}
_{1}}\Big) +\lambda_2 \Big],
\end{eqnarray*}
where for the last equivalence we applied \eqref{C_zub2} with $%
a=t_{0}-t_{1}$ and $\lambda _{1}=(1-s_{1})t_{1}^{-1/\alpha _{1}}$. Hence,
\begin{multline*}
\mathbf{E}\left[ s_{1}^{Z_{1}(k_{1},n)}e^{-\lambda _{2}\frac{Q_{2}(n)}{%
Q_{21}(x_{1}h^{\ast }(n))}Z_{2}(k_{1},n)}|\mathbf{Z}(k_{0},n)=y\frac{%
Q_{21}(x_{0}h^{\ast }(n))}{Q_{2}(n)}\mathbf{e}_{2}\right] \\
\sim \exp \left\{ -\left( (1-s_{1})^{1-\alpha _{1}}t_{1}^{1-1/\alpha
_{1}}\left( 1-\left( 1+\left( \frac{t_{0}}{t_{1}}-1\right) (1-s_{1})^{\alpha
_{1}}\right) ^{1-1/\alpha _{1}}\right) +\lambda _{2}\right) y\right\} .
\end{multline*}
This proves the desired convergence of the finite-dimensional distribution
of the prelimiting process to those of $\mathbf{G(\cdot )}$.

\noindent \textbf{Point (4):} Let $h$ be an integer valued function such
that $1\ll h(n)\ll h^{\ast }(n)$, and $\lambda _{1},\lambda _{2}$ be
positive numbers. Selecting $k_{0}=n-t_{0}h(n)<n-t_{1}h(n)=k_{1}
$, we get by using \eqref{forZ1k1Z2k2}
\begin{multline*}
1-\hat{J}_{2}^{(k_{0},k_{1},n)}\Big(e^{-\lambda _{1}\frac{Q_{1}(h^{\ast }(n))%
}{Q_{1}(t_{1}h(n))}},e^{-\lambda _{2}\frac{Q_{2}(n)}{Q_{21}(t_{1}h(n))}}\Big)%
\sim \frac{1-\mathbf{E}_{2}\Big[\exp \left\{ -\lambda
_{2}Q_{2}(n)Z_{2}((t_{0}-t_{1})h(n))\right\} \Big]}{Q_{21}(t_{0}h(n))} \\
+\frac{\mathbf{E}_{21}\Big[e^{-\lambda _{2}Q_{2}(n)Z_{2}((t_{0}-t_{1})h(n))}%
\Big(1-e^{-\lambda _{1}Q_{1}(h^{\ast }(n))Z_{1}((t_{0}-t_{1})h(n))}\Big)\Big]%
}{Q_{21}(t_{0}h(n))}.
\end{multline*}%
By applying twice Lemma \ref{L_trick} as $h(n)\ll h^{\ast }(n)$ and $h(n)\ll
n$ we get that the first summand is equivalent to
\begin{equation*}
\frac{\lambda _{2}Q_{2}(n)}{Q_{21}(t_{0}h(n))},
\end{equation*}%
and the second summand is smaller than a term equivalent to
\begin{equation*}
\frac{\lambda _{1}Q_{1}(h^{\ast }(n))}{Q_{21}(t_{0}h(n))}=o\Big(\frac{%
Q_{2}(n)}{Q_{21}(t_{0}h(n))}\Big).
\end{equation*}%
This yields
\begin{equation*}
\mathbf{E}_{2}\Big[\exp \left\{ -\lambda _{1}\frac{Z_{1}(k_{1},n)Q_{1}(h^{%
\ast }(n))}{Q_{1}(t_{1}h(n))}-\lambda _{2}\frac{Z_{2}(k_{1},n)Q_{2}(n)}{%
Q_{21}(t_{1}h(n))}\right\} |\mathbf{Z}(k_{0},n)=y_{2}\frac{Q_{21}(t_{0}h(n))%
}{Q_{2}(n)}\mathbf{e}_{2}\Big]\sim e^{-\lambda _{2}y_{2}}.
\end{equation*}%
In the same way we deduce that
\begin{equation*}
1-\mathbf{E}_{2}\Big[\exp \left\{ -\lambda _{1}\frac{Z_{1}(k_{1},n)Q_{1}(h^{%
\ast }(n))}{Q_{1}(t_{1}h(n))}\right\} |\mathbf{Z}(k_{0},n)=y_{1}\frac{%
Q_{1}(t_{0}h(n))}{Q_{1}(h^{\ast }(n))}\mathbf{e}_{1}\Big]\sim e^{-\lambda
_{1}y_{1}}.
\end{equation*}%
The two last equivalences justify the claimed convergence of
finite-dimensional distributions.

\subsection{Convergence of finite-dimensional distributions in Theorem
\protect\ref{T_zubQ3micro}}

\hspace{2cm}\newline
\textbf{Point (1):} Applying Theorem \ref{T_zubQ3}(1) we can consider the function
\begin{equation*}
f_{21}(t;\mathbf{s}):=\lim_{n\rightarrow \infty }\mathbf{E}_{2}%
\left[ \mathbf{s}^{\mathbf{Z}(tg^{\ast }(n),n)}|\mathbf{Z}(n)\neq \mathbf{0}%
\right] =1-t^{-\frac{1}{\alpha _{2}}}\phi \Big((1-s_{1})\frac{t^{\frac{1}{%
\alpha _{2}}+1}}{\alpha _{2}A_{21}},(1-s_{2})t^{%
\frac{1}{\alpha _{2}}}\Big),
\end{equation*}%
whose first order derivative with respect to $t$ is
\begin{eqnarray*}
\frac{\partial f_{21}(t;\mathbf{s})}{\partial t} &=&\frac{1}{%
\alpha _{2}}t^{-\frac{1}{\alpha _{2}}-1}\Big(\phi (\tilde{\lambda}_{1},%
\tilde{\lambda}_{2})-(1+\alpha _{2})\tilde{\lambda}_{1}\frac{\partial \phi }{%
\partial \lambda _{1}}(\tilde{\lambda}_{1},\tilde{\lambda}_{2})-\tilde{%
\lambda}_{2}\frac{\partial \phi }{\partial \lambda _{2}}(\tilde{\lambda}_{1},%
\tilde{\lambda}_{2})\Big) \\
&=&\frac{1}{\alpha _{2}}\Big(t^{-\frac{1}{\alpha _{2}}-1}\phi
^{1+\alpha _{2}}(\tilde{\lambda}_{1},\tilde{\lambda}_{2})-1+s_{1}\Big),
\end{eqnarray*}%
where we have applied \eqref{defphi} and used the notations
\begin{equation*}
\tilde{\lambda}_{1}=(1-s_{1})t^{1/\alpha _{2}+1}/(\alpha _{2}A_{21})\quad
\text{and}\quad \tilde{\lambda}_{2}=(1-s_{2})t^{\frac{1}{\alpha _{2}}}.
\end{equation*}%
Now taking $f_{1}(t;s_{1})=s_{1}$ and recalling the definition of $%
g_{21}^{(Y)}$ in \eqref{pgfeq1} we obtain
\begin{equation*}
g_{21}^{(Y)}(s_{1},f_{21}(t;\mathbf{s}))-f_{21}(t;
\mathbf{s})=\frac{\alpha _{2}}{1+\alpha _{2}}\frac{\partial f_{21}(t;\mathbf{s})}{\partial t}.
\end{equation*}%
Applying Lemma \ref{lemmaathreyaney} yields that $f_{21}$ is
the generating function of the process $\mathbf{Y}$ specified in Definition %
\ref{defprocessX}. Hence the desired convergence of  finite-dimensional
distributions follows by the same arguments as before.

\noindent \textbf{Point (2):} We take $g^{\ast }(n)\ll k_{0}<k_{1}\ll n$. From Theorem \ref{T_zubQ3}(2) we know that there is no more type $%
2$ individuals in the limit process. Applying \eqref{expreZ1k1} and Lemma %
\ref{L_trick} we get
\begin{equation*}
\hat{J}_1^{(\,\mathbf{k},n)}(s_1)=1-\frac{%
Q_{1}(k_{1}-k_{0},1-(1-s_{1})Q_{1}(n-k_{1}))}{Q_{1}(n-k_{0})} \sim 1- \frac{(1-s_{1})Q_{1}(n-k_{1})}{Q_{1}(n-k_{0})}\sim s_{1}.
\end{equation*}

\noindent \textbf{Point (3):} Again as there is no type $2$
individuals it is sufficient to study the dynamics of type $1$ individuals.
Applying Theorem \ref{T_zubQ3}(3) we can consider the function
\begin{equation*}
f_{1}(t;s_{1})=\lim_{n\rightarrow \infty }\mathbf{E}_{2}\left[ \mathbf{s}^{%
\mathbf{Z}((1-e^{-t})n,n)}|\mathbf{Z}(n)\neq \mathbf{0}\right]
=1-((1-e^{-t})+e^{-t}(1-s_{1})^{-\alpha _{1}})^{-1/\alpha _{1}(1+\alpha
_{2})},
\end{equation*}%
whose first order derivative with respect to $t$ is
\begin{equation*}
\frac{\partial f_{1}(t;s_{1})}{\partial t}=\frac{1}{\alpha _{1}(1+\alpha
_{2})}\left( e^{-t}-e^{-t}(1-s_{1})^{-\alpha _{1}}\right) \left(
1-e^{-t}+e^{-t}(1-s_{1})^{-\alpha _{1}}\right) ^{-1-1/(\alpha _{1}(1+\alpha
_{2}))}.
\end{equation*}%
Recalling the definition of $g_{1}^{(V)}$ in \eqref{pgfeq4} we obtain
\begin{equation*}
g_{1}^{(V)}(f_{1}(t;s_{1}))-f_{1}(t;s_{1})=\frac{1}{\alpha _{1}
(1+\alpha _{2})}\Big((1-f_{1}(t;s_{1}))^{1+\alpha _{1}(1+\alpha
_{2})}-1+f_{1}(t;s_{1})\Big)=\frac{\partial f_{1}(t;s_{1})}{\partial t}.
\end{equation*}%
Applying Lemma \ref{lemmaathreyaney} yields that $f_{1}$ is the generating
function of the process $V(\cdot )$ specified in Definition \ref{defprocessX}%
(3). Hence the desired convergence of finite-dimensional distributions
follows.

\noindent \textbf{Point (4):} Let $h$ be an integer-valued function
satisfying $1\ll h(n)\ll n$, and $k_{0}=n-t_{0}h(n)<k_{1}=n-t_{1}h(n)$. The one dimensional distribution of the limit process is given by
Theorem \ref{T_zubQ3}(4). Applying Lemma \ref{L_trick} and %
\eqref{expreZ1k1} yields
\begin{multline*}
1-\mathbf{E}\Big[e^{-\lambda _{1}\frac{Q_{1}(n)}{Q_{1}(t_{1}h(n))}%
Z_{1}(k_{1},n)}|\mathbf{Z}(k_{0},n)=\mathbf{e}_{1}\Big] \sim \frac{1-\mathbf{%
E}_{1}\Big[e^{-\lambda _{1}Q_{1}(n)Z_{1}((t_{0}-t_{1})h(n))}\Big]}{%
Q_{1}(t_{0}h(n))}\sim \lambda _{1}\frac{Q_{1}(n)}{Q_{1}(t_{0}h(n))}.
\end{multline*}%
Hence we deduce
\begin{equation*}
\Big[e^{-\lambda _{1}\frac{Q_{1}(n)}{Q_{1}(x_{1}h(n))}Z_{1}(k_{1},n)}|%
\mathbf{Z}(k_{0},n) =y\frac{Q_{1}(x_{0}h(n))}{Q_{1}(n)}\mathbf{e}_{1}\Big] %
\sim e^{-\lambda_1 y},
\end{equation*}
as required.

\subsection{Convergence of finite-dimensional distributions in Theorem
\protect\ref{T_VatSagmicro}\label{SubVatSagMicro}}

\hspace{1cm}\newline
\noindent \textbf{Point (1):} Consider the function
\begin{multline*}
1-f_{1}(t;s_{1})=1-\lim_{n\rightarrow \infty }\mathbf{E}_{1}\left[
s_{1}^{Z_{1}((1-e^{-t})n,n)}|\mathbf{Z}(n)\neq \mathbf{0}\right]  \\
=\lim_{k_{1}\rightarrow \infty }\frac{1-\mathbf{E}_{1}\Big[%
e^{-(1-s_{1})(e^{t}-1)^{1/\alpha _{1}}Q_{1}(k_{1})Z_{1}(k_{1})}\Big]}{%
(1-e^{-t})^{1/\alpha _{1}}Q_{1}(k_{1})}=\Big(1-e^{-t}+e^{-t}(1-s_{1})^{-%
\alpha _{1}}\Big)^{-\frac{1}{\alpha _{1}}},
\end{multline*}%
where we applied \eqref{expreZ1k1} with $k_{0}=0$ and $k_{1}=%
\left[ (1-e^{-t})n\right] $, and \eqref{monotype}. Then following the proof
of Theorem \ref{T_ZubQ1micro}(1) we obtain
\begin{equation*}
g_{1}^{(W)}(f_{1}(t;s_{1}))-f_{1}(t;s_{1})=\frac{\partial f_{1}(t;s_{1})}{%
\partial t}.
\end{equation*}%
Now applying Theorem \ref{T_VatSag}(1) we can consider the
function
\begin{equation*}
f_{21}(t;\mathbf{s}):=\lim_{n\rightarrow \infty }\mathbf{E}_{2}%
\left[ \mathbf{s}^{\mathbf{Z}((1-e^{-t})n,n)}|\mathbf{Z}(n)\neq
\mathbf{0}\right] =1-(1-e^{-t})^{-1/\alpha _{2}}\psi (\bar{%
\lambda}_{1},\bar{\lambda}_{2}),
\end{equation*}%
where, for the sake of readability we have
introduced
\begin{equation*}
\bar{\lambda}_{1}=(1-s_{1})(e^{t}-1)^{1/\alpha _{1}}\quad \text{and}\quad
\bar{\lambda}_{2}=(1-s_{2})(e^{t}-1)^{1/\alpha _{2}}.
\end{equation*}%
Since  $1+\alpha_{2}^{-1}=\alpha_{1}^{-1}$ under Assumption \eqref{VaSag}, we get
\begin{multline*}
\frac{\partial f_{21}(t;\mathbf{s})}{\partial t}%
=(1-e^{-t})^{-1/\alpha _{2}-1}\Big(\frac{e^{-t}}{\alpha _{2}}\psi (\bar{%
\lambda}_{1},\bar{\lambda}_{2})-\frac{\bar{\lambda}_{1}}{\alpha _{1}}\frac{%
\partial \psi }{\partial {\lambda }_{1}}(\bar{\lambda}_{1},\bar{\lambda}%
_{2})-\frac{\lambda _{2}}{\alpha _{2}}\frac{\partial \psi }{\partial \bar{%
\lambda}_{2}}(\bar{\lambda}_{1},\bar{\lambda}_{2})\Big) \\
=\frac{(1-e^{-t})^{-1/\alpha _{2}-1}}{\alpha _{2}}\Big((e^{-t}-1)\psi (\bar{%
\lambda}_{1},\bar{\lambda}_{2})+b^{\alpha _{2}}\psi ^{1+\alpha _{2}}(\bar{%
\lambda}_{1},\bar{\lambda}_{2})-\alpha _{2}A_{12}\bar{\lambda}%
_{1}\Big(1+\Big(\frac{b}{\sigma }\bar{\lambda}_{1}\Big)^{\alpha _{1}}\Big)%
^{-1/\alpha _{1}}\Big),
\end{multline*}%
where we have applied \eqref{Partial2}. Further, recalling the
definitions of $g_{21}^{(W)}$ and $\kappa $ in \eqref{pgfeqW}
and \eqref{defkappa} we see that
\begin{multline*}
g_{21}^{(W)}(f_{1}(t;s_{1}),f_{21}(t;\mathbf{s}))-f_{21}(t;\mathbf{s}) \\
=\frac{1}{\kappa}\Big(\frac{\sigma A_{12}}{b}f_{1}(t;s_{1})+%
\frac{b^{\alpha _{2}}}{\alpha _{2}}\Big((1-f_{21}(t;\mathbf{s}%
))^{1+\alpha _{2}}-1+(1+\alpha _{2})f_{21}(t;\mathbf{s})\Big)%
\Big)-f_{21}(t;\mathbf{s}).
\end{multline*}%
Observing that
\begin{equation*}
\frac{1}{\kappa }\frac{b^{\alpha _{2}}(1+\alpha _{2})}{\alpha
_{2}}-1=\frac{\alpha _{2}}{(1+\alpha _{2})b^{\alpha _{2}}-1}\frac{b^{\alpha
_{2}}(1+\alpha _{2})}{\alpha _{2}}-1=\frac{1}{(1+\alpha _{2})b^{\alpha
_{2}}-1}=\frac{1}{\alpha _{2}\kappa }
\end{equation*}%
and recalling the definition of $b$ in \eqref{defb} leads to
\begin{multline*}
g_{21}^{(W)}(f_{1}(t;s_{1}),f_{21}(t;\mathbf{s}))-f_{21}(t;\mathbf{s}) \\
=\frac{1}{\alpha _{2}\kappa }\Big(b^{\alpha _{2}}(1-f_{21}(t;\mathbf{s}))^{1+\alpha _{2}}-\frac{\sigma \alpha _{2}A_{12}}{b}%
(1-f_{1}(t;s_{1}))-1+f_{21}(t;\mathbf{s})\Big) \\
=\frac{(1-e^{-t})^{-1-1/\alpha _{2}}}{\alpha _{2}\kappa }\Big({%
b^{\alpha _{2}}}\psi ^{1+\alpha _{2}}(\bar{\lambda}_{1},\bar{\lambda}_{2})-%
\frac{\sigma \alpha _{2}A_{12}}{b}\Big(1+\frac{e^{-t}}{1-e^{-t}}%
(1-s_{1})^{-\alpha _{1}}\Big)^{-\frac{1}{\alpha _{1}}}-(1-e^{-t})\psi (\bar{%
\lambda}_{1},\bar{\lambda}_{2})\Big),
\end{multline*}%
where we have used again the equality $1+\alpha _{2}^{-1}=\alpha _{1}^{-1}$.
This, finally, yields
\begin{equation*}
g_{21}^{(W)}(f_{1}(t;s_{1}),f_{21}(t;\mathbf{s}))-f_{21}(t;\mathbf{s})=\frac{1}{\kappa }\frac{\partial
f_{21}(t;\mathbf{s})}{\partial t}.
\end{equation*}%
Applying Lemma \ref{lemmaathreyaney} we conclude that $f_{21}$ is the
generating function of the process $\mathbf{W}\left( \cdot \right) $
specified in Definition \ref{defprocessX}(4).

\noindent \textbf{Point (2):} Let $1\ll h(n)\ll n$ and $%
k_{0}=n-t_{0}h(n)<k_{1}=n-t_{1}h(n)$ for $0<t_{1}<t_{0}$. Choose
$\lambda_1, \lambda_2 \geq 0$.
Using \eqref{expreZ1k1}, the fact that $h(n)\gg 1$ and recalling Lemma \ref%
{L_trick} we see that
\begin{equation*}
1-\hat{J}_1^{\,(k_0,k_1,n)}\Big(e^{-\lambda_{1}\frac{Q_{1}(n)}{%
Q_{1}(x_{1}h(n))}}\Big) \sim \frac{Q_{1}((t_{0}-t_{1})h(n),1-\lambda
_{1}Q_{1}(n))}{Q_{1}(x_{0}h(n))}\sim \frac{\lambda _{1}Q_{1}(n)}{%
Q_{1}(x_{0}h(n))}.
\end{equation*}
Hence
\begin{eqnarray*}
\log \mathbf{E}\Big[e^{-\lambda _{1}\frac{Q_{1}(n)}{Q_{1}(t_{1}h(n))}%
Z_{1}(k_{1},n)}|\mathbf{Z}(k_{0},n) =y\frac{Q_{1}(t_{0}h(n))}{Q_{1}(n)}%
\mathbf{e}_{1}\Big] \sim -\lambda _{1}y.
\end{eqnarray*}%
Then from \eqref{forZ1k1Z2k2} \ we obtain
\begin{multline*}
1-\hat{J}_2^{\,(k_0,k_1,n)}\Big(e^{-\frac{\lambda_{1}Q_{1}(n)}{%
Q_{1}(t_{1}h(n))}} , e^{-\frac{\lambda_{2}Q_{2}(n)}{Q_{2}(t_{1}h(n))}} \Big)
\sim \frac{1-\mathbf{E}_{2}\Big[e^{-\lambda
_{1}Q_{1}(n)Z_{1}((t_{0}-t_{1})h(n))}e^{-\lambda
_{2}Q_{21}(n)Z_{2}((t_{0}-t_{1})h(n))}\Big]}{Q_{21}(t_{0}h(n))} \\
=\frac{1-\mathbf{E}_{2}\Big[e^{-\lambda
_{2}Q_{21}(n)Z_{2}((t_{0}-t_{1})h(n))}\Big]}{Q_{21}(t_{0}h(n))}+\frac{%
\mathbf{E}_{2}\Big[e^{-\lambda _{2}Q_{21}(n)Z_{2}((t_{0}-t_{1})h(n))}\Big(%
1-e^{-\lambda _{1}Q_{1}(n)Z_{1}((t_{0}-t_{1})h(n))}\Big)\Big]}{%
Q_{21}(t_{0}h(n))}.
\end{multline*}%
To control the first term we apply Lemma \ref{L_trick} and Equivalence %
\eqref{surviv}:
\begin{equation*}
\frac{1-\mathbf{E}_{2}\Big[e^{-\lambda
_{2}Q_{21}(n)Z_{2}((t_{0}-t_{1})h(n))} \Big]}{Q_{21}(t_{0}h(n))}\sim \frac{%
\lambda _{2}Q_{21}(n)}{Q_{21}(t_{0}h(n))}.
\end{equation*}%
The second term is non-negative and bounded by
\begin{equation*}
\frac{\mathbf{E}_{2}\Big[1-e^{-\lambda _{1}Q_{1}(n)Z_{1}((t_{0}-t_{1})h(n))}%
\Big]}{Q_{21}(t_{0}h(n))} \sim \frac{\lambda _{1}(t_{0}-t_{1})h(n)Q_{1}(n)}{%
Q_{21}(t_{0}h(n))}=o\Big(\frac{\lambda _{2}Q_{21}(n)}{Q_{21}(t_{0}h(n))}\Big),
\end{equation*}
where the equality is a direct consequence of Condition (\ref{VaSag}) and
Equivalence \eqref{surviv}. This ends the proof of point (2).

\section{Tightness\label{S_tightness}}

In this section we complete the proof of Theorems \ref{T_ZubQ1micro}-\ref%
{T_VatSagmicro} by checking tightness. A key tool will be a
slightly simplified version of Theorem 6.5.4 in \cite{GS71} giving a
convergence criterion in Skorokhod topology for a class of Markov processes
which we formulate to simplify references. Let $a_1 \leq a_2$ be
in $\mathbf{R}_+$ and
\begin{equation*}
\left\{ \mathbf{K}_{n}(u)=\left( K_{1n}(u),K_{2n}(u)\right) ,a_{1}\leq u\leq
a_{2}\right\} , \quad n=1,2,...
\end{equation*}
be a sequence of Markov processes with values in $\mathbf{R}^{2}$ whose
trajectories belong with probability 1 to the space $\mathbf{D}_{[a_{1},a_{2}]}(%
\mathbf{R}^{2})$ of c\`adl\`ag functions on $[a_{1},a_{2}]$.

\begin{theorem}
\label{T_skoroh}If the finite-dimensional distributions of $\left\{ \mathbf{K%
}_{n}(u),a_{1}\leq u\leq a_{2}\right\} $ converge, as $n\rightarrow \infty $
to the respective finite-dimensional distributions of a process $\left\{
\mathbf{K}(u),a_{1}\leq u\leq a_{2}\right\} $ and for any $\varepsilon >0$
\begin{equation*}
\lim_{g\downarrow 0}\overline{\lim_{n\rightarrow \infty }}\sup_{0\leq
u_{1}-u_{0}\leq g}\sup_{\mathbf{y\in }\mathbb{R}^{2}}\mathbf{P}\left(
\left\Vert \mathbf{K}_{n}(u_{1})-\mathbf{y}\right\Vert \mathbf{>}\varepsilon
|\,\mathbf{K}_{n}(u_{0})=\mathbf{y}\right) =0,
\end{equation*}%
then, as $n\rightarrow \infty $
\begin{equation*}
\mathcal{L}\left\{ \mathbf{K}_{n}(u),a_{1}\leq u\leq a_{2}\right\}
\Longrightarrow \mathcal{L}\left\{ \mathbf{K}(u),a_{1}\leq u\leq
a_{2}\right\} .
\end{equation*}
\end{theorem}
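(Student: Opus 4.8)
The statement is the finite-horizon, $\mathbf{R}^{2}$-valued specialization of Theorem 6.5.4 in \cite{GS71}, so the plan is essentially to reduce to that result after checking that the hypotheses assumed here are at least as strong as the ones required there. Recall that weak convergence in $\mathbf{D}_{[a_{1},a_{2}]}(\mathbf{R}^{2})$ is equivalent to the conjunction of (i) convergence of the finite-dimensional distributions along a set of times dense in $[a_{1},a_{2}]$ and containing $a_{1}$ and $a_{2}$, and (ii) tightness (relative compactness) of the sequence of laws $\mathcal{L}\{\mathbf{K}_{n}(u),a_{1}\leq u\leq a_{2}\}$ on $\mathbf{D}_{[a_{1},a_{2}]}(\mathbf{R}^{2})$. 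Hypothesis (i) is assumed for all finite-dimensional distributions, and it simultaneously identifies the limiting finite-dimensional laws as those of $\mathbf{K}$; so the entire content of the theorem is tightness, together with the (then automatic) fact that any weak limit point of a tight sequence of $\mathbf{D}$-valued processes along which the finite-dimensional distributions converge has c\`adl\`ag paths and coincides in law with $\mathbf{K}$. Thus I would devote the proof to verifying the standard compact-containment and oscillation clauses of the tightness criterion in $\mathbf{D}_{[a_{1},a_{2}]}$.

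For compact containment, fix $\eta>0$: by (i) the law of $\mathbf{K}_{n}(a_{1})$ converges, hence is tight, so there is a ball $B$ with $\mathbf{P}(\mathbf{K}_{n}(a_{1})\in B)\geq 1-\eta/2$ for all large $n$; choosing $g>0$ so that the $\overline{\lim}_{n}\sup\dots$ in the hypothesis is small and iterating the displacement bound over a partition of $[a_{1},a_{2}]$ into cells of length $\leq g$, the Markov property confines $\mathbf{K}_{n}(\cdot)$ to a fixed compact set with probability $\geq 1-\eta$. For oscillation, it suffices to show that for every $\varepsilon>0$,
\begin{equation*}
\lim_{\delta\downarrow 0}\ \overline{\lim_{n\to\infty}}\ \mathbf{P}\big(w'_{[a_{1},a_{2}]}(\mathbf{K}_{n},\delta)>\varepsilon\big)=0,
\end{equation*}
where $w'$ is the Skorokhod modulus, i.e.\ the infimum, over partitions of $[a_{1},a_{2}]$ into subintervals of length $\geq\delta$, of the largest oscillation of $\mathbf{K}_{n}$ within a subinterval. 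To estimate it, partition $[a_{1},a_{2}]$ into $N=\lceil(a_{2}-a_{1})/g\rceil$ cells of length $\leq g$, condition successively (via the Markov property) on the values of $\mathbf{K}_{n}$ at the partition points, and bound cell by cell the probability that the oscillation of $\mathbf{K}_{n}$ inside a cell exceeds $\varepsilon$; summing over the $N$ cells and then letting $g\downarrow 0$ yields the claim, since $w'$ tolerates one jump per cell.

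The step that needs the most care — and the one genuinely imported from \cite{GS71} — is the passage from the \emph{one-time} estimate in the hypothesis, $\mathbf{P}(\|\mathbf{K}_{n}(u_{1})-\mathbf{y}\|>\varepsilon\mid\mathbf{K}_{n}(u_{0})=\mathbf{y})$, to a bound on the \emph{running} oscillation $\sup_{u_{0}\leq u\leq u_{1}}\|\mathbf{K}_{n}(u)-\mathbf{y}\|$ over a short interval. This is handled by a stopping-time argument: let $\tau$ be the first time after $u_{0}$ at which the displacement from $\mathbf{y}$ exceeds $2\varepsilon$; on $\{\tau\leq u_{1}\}$ apply the Markov property at $\tau$, which, by the hypothesis started now from $\mathbf{K}_{n}(\tau)$, forces $\|\mathbf{K}_{n}(u_{1})-\mathbf{K}_{n}(\tau)\|$ to be small with high probability, hence $\|\mathbf{K}_{n}(u_{1})-\mathbf{y}\|>\varepsilon$ with comparable probability — an Ottaviani/L\'evy-type maximal inequality that is \emph{uniform in the starting point}. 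The uniformity of our hypothesis over all $\mathbf{y}\in\mathbb{R}^{2}$ (rather than merely over compacts) makes this argument immediate and, in particular, shows that our assumptions are no weaker than those of \cite{GS71}; once this maximal inequality is available, the cell-by-cell estimate closes the oscillation bound, tightness follows, and with (i) the asserted convergence in Skorokhod topology is obtained.
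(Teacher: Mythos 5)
First, a point of comparison: the paper does not prove this statement at all — it is quoted (in ``slightly simplified'' form) from Theorem 6.5.4 of Gikhman and Skorokhod \cite{GS71} precisely so that it can be cited rather than reproved. So the only question is whether your reconstruction is sound, and it has a genuine quantitative gap in the oscillation step. Your Ottaviani-type stopping-time argument is correct and is indeed the key imported ingredient: writing $\alpha_n(g,\varepsilon)$ for the double supremum in the hypothesis, it gives $\mathbf{P}\big(\sup_{u_0\leq u\leq u_1}\Vert \mathbf{K}_n(u)-\mathbf{y}\Vert>2\varepsilon\,|\,\mathbf{K}_n(u_0)=\mathbf{y}\big)\leq \beta_n(g,\varepsilon):=\alpha_n(g,\varepsilon)/(1-\alpha_n(g,\varepsilon))$ uniformly in $\mathbf{y}$. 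But your plan then partitions $[a_1,a_2]$ into $N\approx (a_2-a_1)/g$ cells and union-bounds cell by cell. The per-cell probability you can produce is at best $\beta_n(g,\varepsilon)$ (oscillation exceeds $\varepsilon$) or $\beta_n(g,\varepsilon)^2$ (two separated $\varepsilon$-movements, the event actually relevant to $w'$), so the total is of order $(a_2-a_1)\beta_n(g,\varepsilon)^2/g$. The hypothesis only gives $\lim_{g\downarrow 0}\overline{\lim}_n\beta_n(g,\varepsilon)=0$, not $\beta_n(g,\varepsilon)=o(\sqrt{g})$, so this bound need not tend to $0$. The gap is not cosmetic: for the single (hence trivially convergent) Markov process $S(\sqrt{t})$ with $S$ a stable process one has $\alpha(g,\varepsilon)\asymp\sqrt{g}$, and the cell-by-cell accounting stalls at a positive constant.

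The classical proof avoids the union bound over $1/g$ cells by working with two separate scales. Introduce the stopping times $\tau_0=a_1$ and $\tau_{k+1}=\inf\{u>\tau_k:\Vert \mathbf{K}_n(u)-\mathbf{K}_n(\tau_k)\Vert\geq\varepsilon\}$. At a \emph{fixed} scale $H$ chosen so that $\overline{\lim}_n\beta_n(H,\varepsilon)\leq 1/2$, each gap $\tau_{k+1}-\tau_k$ exceeds $H$ with conditional probability at least $1/2$, so the number $K$ of such times in $[a_1,a_2]$ has exponential tails uniformly in $n$. A two-sided modulus of size $4\varepsilon$ on an interval of length $h$ forces two of these times within distance $h$ of each other, and conditioning at $\tau_k$ shows each gap is $\leq h$ with probability at most $\beta_n(h,\varepsilon)$; the union bound is then taken over the at most $M$ relevant indices on the event $\{K\leq M\}$, giving $\mathbf{P}(K>M)+M\,\beta_n(h,\varepsilon)$, which vanishes upon letting first $h\downarrow 0$ and then $M\rightarrow\infty$. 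Compact containment then follows from this modulus bound combined with tightness of the one-dimensional marginals, which you already have from the finite-dimensional convergence. In short: your maximal inequality is right, but the bookkeeping after it must count $\varepsilon$-oscillation times rather than cells.
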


With this theorem in hands, the method of proving the desired tightness is,
essentially, the same for all three theorems and has been used in various
situations in \cite{fleischmann1977structure}, \cite{vatutin2014macroscopic}
and \cite{Vat15}. For instance, to prove the tightness in Theorems \ref%
{T_ZubQ1micro} and \ref{T_zubQ3micro} one can apply a combination of
arguments used in \cite{fleischmann1977structure} and \cite{vatutin2014macroscopic}. The main difference with
these previous proofs is related with the non finiteness of
offspring variances under Conditions \eqref{F1}-\eqref{defA21}. Since the
remaining parts of the proofs follow the reasoning used in \cite%
{fleischmann1977structure} and \cite{vatutin2014macroscopic} we check in
this section the tightness for Theorem \ref{T_VatSagmicro} only. In what
follows it will be convenient to write $\mathbf{P}^{(n)}(\mathcal{B})$ for $%
\mathbf{P}(\mathcal{B}|\mathbf{Z}(n)\neq \mathbf{0})$ for any admissible
event~$\mathcal{B}$ and to use the notation $\left\Vert \mathbf{x}%
\right\Vert =\left\vert x_{1}\right\vert +\left\vert x_{2}\right\vert $ for $%
\mathbf{x=}\left( x_{1},x_{2}\right) $.\newline

\noindent \textbf{Proof of tightness for Theorem \ref{T_VatSagmicro} (1)
}. We will use in this proof ideas from \cite{fleischmann1977structure}. As we have mentioned, according to Lemma %
\ref{L_convol} the law $\mathbf{P}(\left\{ \mathbf{Z}(m,n),0\leq
m\leq n\right\} \in (\cdot )|\mathbf{Z}(n)\neq \mathbf{0})$ specifies, for
each fixed $n$ an inhomogeneous Markov branching process. We denote its
transition probabilities by $\mathbf{P}^{(n)}(m_{1},\mathbf{z};m_{2},(\cdot
))$. In the case under consideration, $\mathbf{K}_{n}$ writes:
\begin{equation*}
\mathbf{K}_{n}(u)=\left\{ \mathbf{Z}(un,n)|\mathbf{Z}(n)\neq \mathbf{0}%
\right\} ,0\leq u<1.
\end{equation*}%
Hence, we will use Theorem \ref{T_skoroh} for each $\left[ 0,U%
\right] \subset \lbrack 0,1)$. Denote
\begin{equation*}
\mathcal{C}(k)=\left\{ \mathbf{z}\in \mathbb{Z}_{+}^{2}:\left\Vert \mathbf{z}%
\right\Vert \leq k\right\} ,
\end{equation*}%
take $0<u_{0}<u_{1}<1$, set
\begin{equation*}
m_{j}=u_{j}n=\left( 1-e^{-t_{j}}\right) n,\,\,j=0,1
\end{equation*}%
and, finally, let
\begin{equation*}
\mathcal{U}(g)=\left\{ \left( u_{1},u_{0}\right) \in \lbrack 0,U]\times
\lbrack 0,U]:0\leq u_{1}-u_{0}\leq g\right\} .
\end{equation*}

\begin{lemma}
\label{L_ceretain event.}Under the conditions of Theorem \ref{T_VatSagmicro}%
(1) for any fixed $k$ and $0<U<1$%
\begin{equation*}
\lim_{g\downarrow 0}\overline{\lim_{n\rightarrow \infty }}\sup_{\mathcal{U}%
(g)}\sup_{\mathbf{z}\in \mathcal{C}(k)}\mathbf{P}^{(n)}(\mathbf{Z}%
(m_{1};n)\neq \mathbf{z}|\mathbf{Z}(m_{0};n)=\mathbf{z})=0.
\end{equation*}
\end{lemma}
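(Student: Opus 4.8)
The plan is to reduce the assertion, via the Markov branching structure of the conditioned reduced process, to a single‑line‑of‑descent estimate, and then to control on the generating‑function level the probability that a given type‑$1$ (resp. type‑$2$) ancestral particle present at time $m_{0}$ neither branches nor changes type by time $m_{1}$. By Lemma~\ref{L_convol} the law $\mathbf{P}(\{\mathbf{Z}(m,n),0\le m\le n\}\in(\cdot)\,|\,\mathbf{Z}(n)\ne\mathbf{0})$ is, for fixed $n$, that of an inhomogeneous Markov branching process, so given $\mathbf{Z}(m_{0},n)=\mathbf{z}=(z_{1},z_{2})$ the $z_{1}$ type‑$1$ and $z_{2}$ type‑$2$ reduced particles evolve independently over $[m_{0},m_{1}]$. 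The event $\{\mathbf{Z}(m_{1},n)=\mathbf{z}\}$ contains the event that each type‑$1$ particle has exactly one type‑$1$ reduced descendant at $m_{1}$ while each type‑$2$ particle has exactly one type‑$2$ and no type‑$1$ reduced descendant at $m_{1}$; hence, writing $\pi_{1}^{(n)}=\mathbf{P}^{(n)}(Z_{1}(m_{1},n)=1\,|\,\mathbf{Z}(m_{0},n)=\mathbf{e}_{1})$ and $\pi_{2}^{(n)}=\mathbf{P}^{(n)}(\mathbf{Z}(m_{1},n)=\mathbf{e}_{2}\,|\,\mathbf{Z}(m_{0},n)=\mathbf{e}_{2})$,
\[
\sup_{\mathbf{z}\in\mathcal{C}(k)}\mathbf{P}^{(n)}\big(\mathbf{Z}(m_{1},n)\neq\mathbf{z}\,\big|\,\mathbf{Z}(m_{0},n)=\mathbf{z}\big)\le 1-\big(\pi_{1}^{(n)}\wedge\pi_{2}^{(n)}\big)^{k}.
\]
Since $n-m_{0},n-m_{1}\in[(1-U)n,n]$ and $m_{1}-m_{0}\le gn$, it then suffices to show that $1-\pi_{1}^{(n)}$ and $1-\pi_{2}^{(n)}$ tend to $0$, uniformly over $(u_{1},u_{0})\in\mathcal{U}(g)$, in the iterated limit $\lim_{g\downarrow0}\overline{\lim_{n\to\infty}}$.

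Put $\delta=m_{1}-m_{0}$ and $\phi_{1}(s)=F_{1}(1;s)$, so $\{Z_{1}(m)\}$ is a critical monotype Galton--Watson process of the form \eqref{F1} with index $1+\alpha_{1}$; a type‑$1$ particle at $m_{1}$ has (type‑$1$) descendants at $n$ with probability $Q_{1}(n-m_{1})$, and, as $Z_{1}(m_{1},n)=1$ forces survival to $n$, a direct computation (or \eqref{expreZ1k1}) gives $\pi_{1}^{(n)}=\frac{Q_{1}(n-m_{1})}{Q_{1}(n-m_{0})}\big(\phi_{1}^{(\delta)}\big)'(1-Q_{1}(n-m_{1}))$, where $\phi_{1}^{(\delta)}$ is the $\delta$‑th iterate. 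Because $\phi_{1}^{(j)}(1-x)\in[1-x,1]$ and $\phi_{1}'$ is nondecreasing with $\phi_{1}'(1)=1$ (criticality), the chain rule yields $(\phi_{1}^{(\delta)})'(1-x)\ge\phi_{1}'(1-x)^{\delta}\ge1-\delta\big(1-\phi_{1}'(1-x)\big)$. Now $1-\phi_{1}'(1-y)=\frac{d}{dy}\big(y^{1+\alpha_{1}}L_{1}(y)\big)$ is nondecreasing in $y$ (as $\phi_{1}'$ is nondecreasing), so $y\mapsto y^{1+\alpha_{1}}L_{1}(y)$ is convex and the monotone density theorem together with \eqref{ReprQ} gives $1-\phi_{1}'(1-Q_{1}(n-m_{1}))\sim(1+\alpha_{1})Q_{1}(n-m_{1})^{\alpha_{1}}L_{1}(Q_{1}(n-m_{1}))\sim\frac{1+\alpha_{1}}{\alpha_{1}(n-m_{1})}$. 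Hence $1-(\phi_{1}^{(\delta)})'(1-Q_{1}(n-m_{1}))=O(\delta/n)=O(g)$, and since $Q_{1}(n-m_{1})/Q_{1}(n-m_{0})\to1$ by \eqref{AsQ} and regular variation, $1-\pi_{1}^{(n)}\to0$.

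For $\pi_{2}^{(n)}$ I would split $1-\pi_{2}^{(n)}\le\mathbf{P}^{(n)}(Z_{1}(m_{1},n)\ge1\,|\,\mathbf{Z}(m_{0},n)=\mathbf{e}_{2})+\mathbf{P}^{(n)}(Z_{2}(m_{1},n)\ge2\,|\,\mathbf{Z}(m_{0},n)=\mathbf{e}_{2})$. The first term is at most $A_{21}\delta\,Q_{1}(n-m_{1})/Q_{21}(n-m_{0})$ because the unconditioned mean of $Z_{1}(m_{1},n)$ for a single type‑$2$ root at $m_{0}$ equals $A_{21}\delta\,Q_{1}(n-m_{1})$; using $Q_{1}(n-m_{1})\sim(1-U)^{-1/\alpha_{1}}Q_{1}(n)$, $nQ_{1}(n)\sim\sigma Q_{2}(n)\sim(\sigma/b)Q_{21}(n)$ (Condition \eqref{VaSag} and \eqref{surviv}) and $Q_{21}(n-m_{0})\ge Q_{21}(n)$, this is $O(g)$. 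For the second term the key observation is that $\{Z_{2}(m)\}$ is itself a critical monotype Galton--Watson process (type‑$1$ descendants contribute nothing to $Z_{2}$), with offspring generating function $\widetilde{\phi}(s)=F_{21}(1;1,s)=s+(1-s)^{1+\alpha_{2}}L_{2}(1-s)$ of the form \eqref{F1} with index $1+\alpha_{2}$, while a type‑$2$ particle at $m_{1}$ has some descendant at $n$ with probability $Q_{21}(n-m_{1})$ (not the type‑$2$ survival probability). Thus $\mathbf{E}\big[s^{Z_{2}(m_{1},n)}\,\big|\,Z_{2}(m_{0})=1\big]=\widetilde{\phi}^{(\delta)}\big(1-(1-s)Q_{21}(n-m_{1})\big)$, and repeating verbatim the argument for $\pi_{1}^{(n)}$ (criticality bounds $1-\widetilde{\phi}^{(\delta)}(1-x)\le x$, and the monotone density theorem with \eqref{ReprQ}, \eqref{surviv} gives $1-\widetilde{\phi}'(1-Q_{21}(n-m_{1}))\sim(1+\alpha_{2})b^{\alpha_{2}}Q_{2}(n-m_{1})^{\alpha_{2}}L_{2}(Q_{2}(n-m_{1}))\sim\frac{(1+\alpha_{2})b^{\alpha_{2}}}{\alpha_{2}(n-m_{1})}$) yields
\[
\mathbf{P}^{(n)}\big(Z_{2}(m_{1},n)\ge2\,\big|\,\mathbf{Z}(m_{0},n)=\mathbf{e}_{2}\big)\le\frac{Q_{21}(n-m_{1})}{Q_{21}(n-m_{0})}\Big(1-\big(\widetilde{\phi}^{(\delta)}\big)'\!\big(1-Q_{21}(n-m_{1})\big)\Big)=O(g).
\]

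Combining the three estimates shows $1-\pi_{1}^{(n)}\to0$ and $1-\pi_{2}^{(n)}\to0$ in the required iterated limit, so the displayed bound finishes the proof. The step I expect to be the main obstacle is precisely the passage from finite to possibly infinite offspring variance: when $\alpha_{i}<1$ the relevant second factorial moments are infinite, so one cannot bound the probability that an ancestral line branches during $[m_{0},m_{1}]$ by a moment and must argue on the generating‑function level. The essential analytic input is then the sharp estimate $1-\phi'(1-y)\sim(1+\alpha)y^{\alpha}L(y)$ for $\phi\in\{\phi_{1},\widetilde{\phi}\}$, which I obtain from the monotone density theorem using that $y\mapsto y^{1+\alpha}L(y)$ is convex; together with \eqref{ReprQ} it converts the increment $\delta\le gn$ into an $O(g)$ error. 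A subsidiary point requiring care is the identification of $\{Z_{2}(m)\}$ as a monotype branching process and the use of $Q_{21}(n-m_{1})$, rather than the type‑$2$ survival probability, for the event that a type‑$2$ particle at $m_{1}$ has a descendant at time $n$.
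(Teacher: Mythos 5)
Your argument is correct, but it takes a genuinely different route from the paper's. Both proofs make the same initial reduction (by the branching/Markov structure from Lemma \ref{L_convol}, it suffices to show that a single type-$1$, resp.\ type-$2$, reduced particle stays a single particle of its own type over $[m_0,m_1]$, and then raise to the power $k$). From there the paper argues ``softly'': it first uses the observation that $|\mathbf{Z}(\cdot,n)|+Z_1(\cdot,n)$ only increases at jump times to replace the supremum over $\mathcal{U}(g)$ by the single increment $m_1=m_0+gn$, and then reads off the limiting one-particle ``no jump'' probabilities from results already established --- \eqref{monotype} for type $1$ (giving exactly $1-g/(1-u_0)$) and Theorem \ref{T_VatSag}(1) together with the boundary conditions \eqref{Inpsi} for $\psi$ for type $2$ (giving $(1-a)^{-1/\alpha_2}\,\partial\psi/\partial\lambda_2$ at a point tending to the origin as $g\downarrow 0$). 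You instead work entirely at the pre-limit generating-function level: via \eqref{expreZ1k1} you express the one-particle probabilities through derivatives of the $\delta$-fold iterates of the offspring generating functions, bound $(\phi^{(\delta)})'(s)\ge(\phi'(s))^{\delta}\ge 1-\delta(1-\phi'(s))$, and then use convexity of $y\mapsto y^{1+\alpha}L(y)$, the monotone density theorem and \eqref{ReprQ} to get $1-\phi'(1-Q(\cdot))\sim C/(n-m_1)$, hence an explicit $O(g)$ bound; the type-$2$ case is handled by splitting off the (mean-bounded) type-$1$ emission and treating $\{Z_2(m)\}$ as an autonomous critical monotype process whose particles at $m_1$ survive to $n$ with probability $Q_{21}(n-m_1)$ --- all correct, including the use of \eqref{surviv} to relate $Q_{21}$ and $Q_2$. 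Your version is more self-contained (it does not need the existence or smoothness of $\psi$, only criticality and the tail form \eqref{F1}--\eqref{F2}), at the price of the extra analytic input (monotone density theorem); the paper's version is shorter given the limit theorems already proved. Two cosmetic points: the assertion ``$Q_1(n-m_1)/Q_1(n-m_0)\to 1$'' is not literally true for fixed $g$ (the limit is $((1-u_1)/(1-u_0))^{-1/\alpha_1}$), but since this ratio is $\ge 1$ and you only need a lower bound on $\pi_1^{(n)}$, nothing is lost; and the uniformity over $\mathcal{U}(g)$, which the paper obtains via the jump-monotonicity trick, follows in your setup because all your bounds are monotone in $\delta\le gn$ and $n-m_1\ge(1-U)n$ --- worth stating explicitly.
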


\begin{proof}
Let $n$ be fixed in $\mathbf{N}$. Notice that when a jump occurs
in the process $\{\mathbf{Z}(m,n),0\leq m\leq n\}$, the process $|\mathbf{Z}%
(.,n)|+Z_{1}(.,n)$ is incremented by one. Hence if $m_{0}\leq m_{1}\leq m_{2}
$, then $\mathbf{Z}(m_{0},n)\neq \mathbf{Z}(m_{1},n)$ implies $%
\mathbf{Z}(m_{0},n)\neq \mathbf{Z}(m_{2},n)$. Adding Markov property we get
\begin{multline*}
\sup_{\mathcal{U}(g)}\sup_{\mathbf{z}\in \mathcal{C}(k)}\mathbf{P}^{(n)}(%
\mathbf{Z}(m_{1};n)\neq \mathbf{z}|\mathbf{Z}(m_{0};n)=\mathbf{z}) \\
\leq \sup_{\mathcal{U}(g)}\sup_{\mathbf{z}\in \mathcal{C}(k)}\mathbf{P}%
^{(n)}(\mathbf{Z}(m_{0}+gn;n)\neq \mathbf{z}|\mathbf{Z}(m_{0};n)=\mathbf{z})
\\
=\sup_{u_{0}\in \lbrack 0,U]}\sup_{\mathbf{z}\in \mathcal{C}(k)}\mathbf{P}%
^{(n-m_{0})}(\mathbf{Z}(gn;n-m_{0})\neq \mathbf{z}|\mathbf{Z}(0;n-m_{0})=%
\mathbf{z}).
\end{multline*}%
According to Lemma \ref{L_convol} and (\ref{monotype})

\begin{multline*}
1-\lim_{n\rightarrow \infty }\mathbf{E}_{1}\left[ 
s_{1}^{Z_{1}(gn,n-m_{0})}|\mathbf{Z}(n-m_{0})\neq \mathbf{0}\right]
=\lim_{n\rightarrow \infty }\frac{Q_{1}(gn
;1-(1-s_{1})Q_{1}(n-m_{0}-gn))}{Q_{1}(n-m_{0})} \\
=(1-u_{0})^{1/\alpha _{1}}\Big(g+(1-s_{1})^{-\alpha
_{1}}(1-u_{0}-g)\Big)^{-1/\alpha _{1}}.
\end{multline*}%
In particular,
\begin{equation*}
\lim_{n\rightarrow \infty }\mathbf{P}^{(n-m_{0})}(0,\mathbf{e}%
_{1};gn,\left\{ \mathbf{e}_{1}\right\} )=1-g/(1-u_{0}).
\end{equation*}%
Therefore, for any $\varepsilon >0$ and $(u_{0},u_{1})\in
\mathcal{U}(g)$ there exists $n_{0}$ such that for all $n\geq n_{0}$
\begin{equation}
\lim_{n\rightarrow \infty }\mathbf{P}^{(n)}(m_{0},\mathbf{e}%
_{1};m_{1},\left\{ \mathbf{e}_{1}\right\} )\geq 1-g/(1-u_{0})-\varepsilon
\geq 1-g/(1-U)-\varepsilon =1-C_{1}g-\varepsilon ,  \label{ProbBelow1}
\end{equation}%
and
\begin{equation}
\lim_{g\downarrow 0}\lim_{n\rightarrow \infty }\sup_{\mathcal{U}(g)}\mathbf{P%
}^{(n)}(m_{0},\mathbf{e}_{1};m_{1},\left\{ \mathbf{e}_{1}\right\} )=1.
\label{tight1}
\end{equation}%
Similarly, by Lemma \ref{L_convol} and Theorem \ref{T_VatSag}(1) with $%
a=g/(1-u_{0})$ we get
\begin{eqnarray*}
&&\lim_{n\rightarrow \infty }\mathbf{E}\left[ \mathbf{s}^{\mathbf{Z}(m_{0}+gn,n)}|\mathbf{Z}(m_{0},n)=\mathbf{e}_{2}\right]
=\lim_{n\rightarrow \infty }\mathbf{E}_{2}\left[ \mathbf{s}^{\mathbf{Z}(gn,n-m_{0})}|\mathbf{Z}(n-m_{0})\neq \mathbf{0}\right]  \\
&&\qquad \qquad \qquad =1-a^{-1/\alpha _{2}}\psi \left( (1-s_{1})
\frac{\sigma }{b}\left( \frac{a}{1-a}\right) ^{1/\alpha
_{1}},(1-s_{2})\left( \frac{a}{1-a}\right) ^{1/\alpha _{2}}\right) .
\end{eqnarray*}%
Further, taking into account the boundary conditions (\ref{Inpsi}) and that $%
\alpha _{1}^{-1}=1+\alpha _{2}^{-1}$ yields
\begin{equation*}
\lim_{a\downarrow 0}a^{-1/\alpha _{2}}\psi \left( (1-s_{1})\frac{%
\sigma }{b}\left( \frac{a}{1-a}\right) ^{1/\alpha _{1}},(1-s_{2})\left(
\frac{a}{1-a}\right) ^{1/\alpha _{2}}\right) =1-s_{2}.
\end{equation*}%
Besides, for
\begin{equation*}
\lambda _{1}^{\ast }=\frac{\sigma }{b}\left( \frac{a}{1-a}%
\right) ^{1/\alpha _{1}}\quad \text{and}\quad \lambda _{2}^{\ast }=\left(
\frac{a}{1-a}\right) ^{1/\alpha _{2}}
\end{equation*}%
we have%
\begin{equation*}
\lim_{n\rightarrow \infty }\mathbf{P}^{(n-m_{0})}(0,\mathbf{e}%
_{2};gn,\left\{ \mathbf{e}_{2}\right\} )=\left( 1-a\right) ^{-1/\alpha _{2}}%
\frac{\partial \psi \left( \lambda _{1},\lambda _{2}\right) }{\partial
\lambda _{2}}\left\vert _{\left( \lambda _{1},\lambda _{2}\right) =\left(
\lambda _{1}^{\ast },\lambda _{2}^{\ast }\right) }\right. .
\end{equation*}%
Note that in view of (\ref{Inpsi})
\begin{equation*}
\frac{\partial \psi \left( \lambda _{1},\lambda _{2}\right) }{\partial
\lambda _{2}}\left\vert _{\left( \lambda _{1},\lambda _{2}\right) =\left(
\lambda _{1}^{\ast },\lambda _{2}^{\ast }\right) }\right. \geq 1-C_{2}(g),
\end{equation*}%
where $C_{2}(g)\rightarrow 0$ as $g\downarrow 0$. This implies, in
particular, that for any $\varepsilon >0$ there exists $n_{0}$ such that,
for all $n\geq n_{0}$
\begin{equation}
\mathbf{P}^{(n)}(m_{0},\mathbf{e}_{2};m_{1},\left\{ \mathbf{e}_{2}\right\}
)\geq \left( 1-a\right) ^{-1/\alpha _{2}}(1-C_{2}(g))-\varepsilon
=1-C_{3}(g)-\varepsilon   \label{ProbBelow2}
\end{equation}%
where $C_{3}(g)\rightarrow 0$ as $g\downarrow 0$. In particular,%
\begin{equation}
\lim_{g\downarrow 0}\lim_{n\rightarrow \infty }\sup_{\mathcal{U}(g)}\mathbf{P%
}^{(n)}(m_{0},\mathbf{e}_{2};m_{1},\left\{ \mathbf{e}_{2}\right\} )=1.
\label{tight2}
\end{equation}%
Using (\ref{tight1})-(\ref{tight2}), the branching property, the
decomposability of the process and the positivity of the offspring number of
each particle of the reduced process, we have for all $m_{1}=u_{1}n\geq
m_{0}=u_{0}n$ and $\mathbf{z}\in \mathcal{C}(k)$,
\begin{eqnarray*}
\lim_{g\downarrow 0}\liminf_{n\rightarrow \infty }\inf_{\mathcal{U}(g)}%
\mathbf{P}^{(n)}(m_{0},\mathbf{z};m_{1},\left\{ \mathbf{z}\right\} )
&=&\lim_{u_{1}\downarrow u_{0}}\liminf_{n\rightarrow \infty }\inf_{\mathcal{U%
}(g)}\prod_{j=1}^{2}(\mathbf{P}^{(n)}(m_{0},\mathbf{e}_{j};m_{1},\left\{
\mathbf{e}_{j}\right\} ))^{z_{j}} \\
&\geq &\lim_{g\downarrow 0}\liminf_{n\rightarrow \infty }\inf_{\mathcal{U}%
(g)}\prod_{j=1}^{2}(\mathbf{P}^{(n)}(m_{0},\mathbf{e}_{j};m_{1},\left\{
\mathbf{e}_{j}\right\} ))^{k}=1.
\end{eqnarray*}%
This implies the claim of the lemma.
\end{proof}

\begin{lemma}
\label{L_ratio}Let the conditions of Theorem \ref{T_VatSagmicro}(1) be
valid. If $m_{j}=u_{j}n,~j=0,1,2$ and $0\leq u_{0}<u_{1}<u_{2}<1$ with $%
u_{1}-u_{0}\leq g$, then there exists $C_{4}(g)\rightarrow 0$ as $%
g\downarrow 0$ such that for any $\varepsilon >0$ there exists $n_{0}$ such
that for all $n\geq n_{0}$ and $\mathbf{z}\in \mathcal{C}(k)$,
\begin{eqnarray*}
\mathbf{P}^{(n)}(\mathbf{Z}(m_{1},n)=\mathbf{z\,}|\,\mathbf{Z}(m_{0},n)=%
\mathbf{z};\left\Vert \mathbf{Z}(m_{2},n)\right\Vert \leq k) \geq \frac{%
\mathbf{P}^{(n)}(m_{0},\mathbf{z};m_{1},\left\{ \mathbf{z}\right\} )\mathbf{P%
}^{(n)}(m_{1},\mathbf{z};m_{2},\mathcal{C}(k))}{\mathbf{P}^{(n)}(m_{1},%
\mathbf{z};m_{2},\mathcal{C}(k))\mathbf{+}\left( C_{4}(g)+2\varepsilon
\right) k}.
\end{eqnarray*}
\end{lemma}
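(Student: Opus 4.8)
The plan is to combine the inhomogeneous Markov branching structure of the reduced process $\{\mathbf{Z}(m,n),\,0\le m\le n\}$ conditioned on $\mathbf{Z}(n)\neq\mathbf{0}$ (which is guaranteed by Lemma~\ref{L_convol} and the remark following it) with the one-step transition estimates already obtained inside the proof of Lemma~\ref{L_ceretain event.}, namely \eqref{ProbBelow1} and \eqref{ProbBelow2}. Write $B=\{\mathbf{Z}(m_0,n)=\mathbf{z}\}$, $A=\{\mathbf{Z}(m_1,n)=\mathbf{z}\}$ and $D=\{\|\mathbf{Z}(m_2,n)\|\le k\}$, so that the left-hand side of the claimed inequality is $\mathbf{P}^{(n)}(A\mid B;D)=\mathbf{P}^{(n)}(A\cap D\mid B)/\mathbf{P}^{(n)}(D\mid B)$. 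By the Markov property at time $m_1$,
\[
\mathbf{P}^{(n)}(A\cap D\mid B)=\mathbf{P}^{(n)}(m_0,\mathbf{z};m_1,\{\mathbf{z}\})\,\mathbf{P}^{(n)}(m_1,\mathbf{z};m_2,\mathcal{C}(k)),\qquad
\mathbf{P}^{(n)}(D\mid B)=\mathbf{P}^{(n)}(m_0,\mathbf{z};m_2,\mathcal{C}(k)),
\]
so the numerator is exactly the one in the statement and it remains only to bound the denominator $\mathbf{P}^{(n)}(m_0,\mathbf{z};m_2,\mathcal{C}(k))$ from above.

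The key step is to split this probability according to whether $\mathbf{Z}(m_1,n)$ equals $\mathbf{z}$ or not: on the event $A$ one uses the Markov property at $m_1$ together with $\mathbf{P}^{(n)}(m_0,\mathbf{z};m_1,\{\mathbf{z}\})\le 1$, and on the complement $A^{c}$ one simply drops the constraint $D$. This gives
\[
\mathbf{P}^{(n)}(m_0,\mathbf{z};m_2,\mathcal{C}(k))\le \mathbf{P}^{(n)}(m_1,\mathbf{z};m_2,\mathcal{C}(k))+\bigl(1-\mathbf{P}^{(n)}(m_0,\mathbf{z};m_1,\{\mathbf{z}\})\bigr).
\]
To handle the last term, recall from the end of the proof of Lemma~\ref{L_ceretain event.} that, by the branching property and the decomposability of the process, $\mathbf{P}^{(n)}(m_0,\mathbf{z};m_1,\{\mathbf{z}\})=\prod_{j=1}^{2}\bigl(\mathbf{P}^{(n)}(m_0,\mathbf{e}_j;m_1,\{\mathbf{e}_j\})\bigr)^{z_j}$; hence, by the elementary inequality $1-\prod_j p_j^{z_j}\le\sum_j z_j(1-p_j)\le\|\mathbf{z}\|\,\max_{j}(1-p_j)$ and $\mathbf{z}\in\mathcal{C}(k)$,
\[
1-\mathbf{P}^{(n)}(m_0,\mathbf{z};m_1,\{\mathbf{z}\})\le k\,\max_{j=1,2}\bigl(1-\mathbf{P}^{(n)}(m_0,\mathbf{e}_j;m_1,\{\mathbf{e}_j\})\bigr).
\]

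Now I invoke \eqref{ProbBelow1} and \eqref{ProbBelow2}: choosing $n_0$ to be the larger of the two thresholds they provide, for every $n\ge n_0$ and every $(u_0,u_1)\in\mathcal{U}(g)$ one has $1-\mathbf{P}^{(n)}(m_0,\mathbf{e}_1;m_1,\{\mathbf{e}_1\})\le C_1g+\varepsilon$ and $1-\mathbf{P}^{(n)}(m_0,\mathbf{e}_2;m_1,\{\mathbf{e}_2\})\le C_3(g)+\varepsilon$, where $C_3(g)\to0$ as $g\downarrow0$. Setting $C_4(g):=\max(C_1g,\,C_3(g))$, which tends to $0$ with $g$, we obtain $1-\mathbf{P}^{(n)}(m_0,\mathbf{z};m_1,\{\mathbf{z}\})\le k\bigl(C_4(g)+\varepsilon\bigr)\le k\bigl(C_4(g)+2\varepsilon\bigr)$, and therefore
\[
\mathbf{P}^{(n)}(m_0,\mathbf{z};m_2,\mathcal{C}(k))\le \mathbf{P}^{(n)}(m_1,\mathbf{z};m_2,\mathcal{C}(k))+k\bigl(C_4(g)+2\varepsilon\bigr).
\]
Substituting this bound for the denominator $\mathbf{P}^{(n)}(D\mid B)$ in $\mathbf{P}^{(n)}(A\mid B;D)=\mathbf{P}^{(n)}(A\cap D\mid B)/\mathbf{P}^{(n)}(D\mid B)$ yields precisely the asserted inequality. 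The only point requiring genuine care is the bookkeeping of uniformity: the constant $C_4(g)$ and the threshold $n_0$ must be chosen independently of $(u_0,u_1)\in\mathcal{U}(g)$ and of $\mathbf{z}\in\mathcal{C}(k)$, which is exactly what the suprema in \eqref{ProbBelow1}–\eqref{ProbBelow2} and the finiteness of $k$ (the maximal total mass carried by states in $\mathcal{C}(k)$) deliver; everything else reduces to the Markov property and elementary inequalities.
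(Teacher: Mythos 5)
Your proposal is correct and follows essentially the same route as the paper: the identity from the Markov property at $m_1$, the upper bound on $\mathbf{P}^{(n)}(m_0,\mathbf{z};m_2,\mathcal{C}(k))$ obtained by splitting over the state at $m_1$, and the control of $1-\mathbf{P}^{(n)}(m_0,\mathbf{z};m_1,\{\mathbf{z}\})$ via the product form over the $\leq k$ particles together with \eqref{ProbBelow1}--\eqref{ProbBelow2}. The only cosmetic difference is your choice $C_4(g)=\max(C_1g,C_3(g))$ versus the paper's $C_1g+C_3(g)$, which is immaterial.
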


\begin{proof}
We have
\begin{eqnarray*}
\mathbf{P}^{(n)}(\mathbf{Z}(m_{1},n) =\mathbf{z\,}|\mathbf{Z}(m_{0},n)=%
\mathbf{z},\left\Vert \mathbf{Z}(m_{2},n)\right\Vert \leq k) =\mathbf{P}%
^{(n)}(m_{0},\mathbf{z};m_{1},\left\{ \mathbf{z}\right\} )\frac{\mathbf{P}%
^{(n)}(m_{1},\mathbf{z};m_{2},\mathcal{C}(k))}{\mathbf{P}^{(n)}(m_{0},%
\mathbf{z};m_{2},\mathcal{C}(k))}.
\end{eqnarray*}%
In view of (\ref{ProbBelow1})-(\ref{ProbBelow2}), there exists $%
n_0$ such that for $n \geq n_0$,
\begin{eqnarray*}
\mathbf{P}^{(n)}(m_{0},\mathbf{z};m_{2},\mathcal{C}(k)) &=&\sum_{\mathbf{z}%
^{\prime }}\mathbf{P}^{(n)}(m_{0},\mathbf{z};m_{1},\left\{ \mathbf{z}%
^{\prime }\right\} )\mathbf{P}^{(n)}(m_{1},\mathbf{z}^{\prime };m_{2},%
\mathcal{C}(k)) \\
&\leq &1-\mathbf{P}^{(n)}(m_{0},\mathbf{z};m_{1},\left\{ \mathbf{z}\right\}
)+\mathbf{P}^{(n)}(m_{1},\mathbf{z};m_{2},\mathcal{C}(k)) \\
&\leq &1-\left( 1-C_{3}(g)-\varepsilon \right) ^{k}\left(
1-C_{1}g-\varepsilon \right) ^{k}+\mathbf{P}^{(n)}(m_{1},\mathbf{z};m_{2},%
\mathcal{C}(k)) \\
&\leq &\left(C_1 g+C_{3}(g)+2\varepsilon \right) k+\mathbf{P}%
^{(n)}(m_{1},\mathbf{z};m_{2},\mathcal{C}(k)).
\end{eqnarray*}%
Hence the needed statement follows.
\end{proof}

\hspace{1cm} \newline

Let $U\in (0,1)$ and $\delta \in (0,U)$ be fixed and, in addition, $U+\delta
<1$.

\begin{lemma}
\label{L_Kskrokh}Under the conditions of Theorem \ref{T_VatSagmicro}(1) for
any fixed $k$ and $m_{0}=u_{0}n,\ m_{1}=u_{1}n,\ m_{2}=\left( U+\delta
\right) n$ we have%
\begin{equation*}
\lim_{g\downarrow 0}\overline{\lim_{n\rightarrow \infty }}\sup_{\mathcal{U}%
(g)}\sup_{\mathbf{z}\in \mathcal{C}(k)}\mathbf{P}^{(n)}(\mathbf{Z}%
(m_{1};n)\neq \mathbf{z\,}|\,\mathbf{Z}(m_{0};n)=\mathbf{z},\left\Vert
\mathbf{Z}(m_{2},n)\right\Vert \leq k)=0.
\end{equation*}
\end{lemma}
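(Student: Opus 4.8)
The plan is to derive the statement from Lemma~\ref{L_ratio}, Lemma~\ref{L_ceretain event.}, and one new ingredient: a lower bound on the macroscopic-time transition probability $\mathbf P^{(n)}(m_1,\mathbf z;m_2,\mathcal C(k))$, with $m_2=(U+\delta)n$, uniform in $n$ large, in $u_1\in[0,U]$, and in $\mathbf z\in\mathcal C(k)$. So first I would establish that there is a constant $c_0>0$, depending only on $k,U,\delta$, and an index $n_1$ such that $\mathbf P^{(n)}(m_1,\mathbf z;m_2,\mathcal C(k))\ge c_0$ for all $n\ge n_1$, all $u_1\in[0,U]$ and all $\mathbf z\in\mathcal C(k)$. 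Granting this, the rest is bookkeeping: one may assume $u_0<u_1$ (the case $u_0=u_1$ being vacuous), so that $u_0<u_1\le U<U+\delta<1$ and Lemma~\ref{L_ratio} applies; combining it with the elementary inequality $\frac{AB}{B+c}\ge\frac{A}{1+c/c_0}$, valid for $B\ge c_0>0$, yields, for every $\varepsilon>0$, all large $n$, all $\mathbf z\in\mathcal C(k)$ and all $(u_0,u_1)\in\mathcal U(g)$,
\begin{equation*}
\mathbf P^{(n)}(\mathbf Z(m_1,n)=\mathbf z\,|\,\mathbf Z(m_0,n)=\mathbf z;\,\|\mathbf Z(m_2,n)\|\le k)\ \ge\ \frac{\mathbf P^{(n)}(m_0,\mathbf z;m_1,\{\mathbf z\})}{1+(C_4(g)+2\varepsilon)k/c_0}.
\end{equation*}
Taking $\inf_{\mathbf z\in\mathcal C(k)}\inf_{\mathcal U(g)}$, then $\liminf_{n\to\infty}$, then $g\downarrow0$, the numerator tends to $1$ by Lemma~\ref{L_ceretain event.}, which is precisely the statement $\lim_{g\downarrow0}\liminf_{n}\inf_{\mathcal U(g)}\inf_{\mathbf z}\mathbf P^{(n)}(m_0,\mathbf z;m_1,\{\mathbf z\})=1$; letting finally $\varepsilon\downarrow0$ gives $\lim_{g\downarrow0}\limsup_{n}\sup_{\mathcal U(g)}\sup_{\mathbf z}\mathbf P^{(n)}(\mathbf Z(m_1;n)\ne\mathbf z\,|\,\mathbf Z(m_0;n)=\mathbf z;\,\|\mathbf Z(m_2,n)\|\le k)=0$, which is the assertion.

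For the uniform lower bound on $\mathbf P^{(n)}(m_1,\mathbf z;m_2,\mathcal C(k))$ I would argue as follows. By decomposability of the reduced process and the branching property---exactly as in the last display of the proof of Lemma~\ref{L_ceretain event.}---we have $\mathbf P^{(n)}(m_1,\mathbf z;m_2,\mathcal C(k))\ge\mathbf P^{(n)}(m_1,\mathbf z;m_2,\{\mathbf z\})=\prod_{j=1}^{2}\big(\mathbf P^{(n)}(m_1,\mathbf e_j;m_2,\{\mathbf e_j\})\big)^{z_j}$, so it is enough to bound each single-particle factor from below. Set $a=(U+\delta-u_1)/(1-u_1)$; as $u_1$ runs over $[0,U]$ this $a$ stays in the compact interval $[\delta/(1-U),\,U+\delta]\subset(0,1)$. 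Repeating the computations carried out in the proof of Lemma~\ref{L_ceretain event.} (through \eqref{expreZ1k1}, Lemma~\ref{L_convol}, \eqref{monotype}, Theorem~\ref{T_VatSag}(1) and the initial conditions \eqref{Inpsi}), but now for the \emph{macroscopic} increment $m_2-m_1=(U+\delta-u_1)n$ of the process restarted at time $m_1$ from $\mathbf e_j$, one finds
\begin{equation*}
\lim_{n\to\infty}\mathbf P^{(n)}(m_1,\mathbf e_1;m_2,\{\mathbf e_1\})=1-a=\frac{1-U-\delta}{1-u_1}\ \ge\ 1-U-\delta>0,
\end{equation*}
and, writing $\lambda_1^{\ast}=\frac{\sigma}{b}(\frac{a}{1-a})^{1/\alpha_1}$ and $\lambda_2^{\ast}=(\frac{a}{1-a})^{1/\alpha_2}$,
\begin{equation*}
\lim_{n\to\infty}\mathbf P^{(n)}(m_1,\mathbf e_2;m_2,\{\mathbf e_2\})=(1-a)^{-1/\alpha_2}\,\frac{\partial\psi}{\partial\lambda_2}\Big|_{(\lambda_1^{\ast},\lambda_2^{\ast})}.
\end{equation*}
The first limit is bounded below by $1-U-\delta>0$; the second is a continuous, strictly positive function of $a$ on the compact interval above---positivity of $\partial\psi/\partial\lambda_2$ on $\mathbf R_{+}^{2}$ following from \eqref{Inpsi} together with the fact (cf.\ \eqref{equivvatsag}) that $1-\psi$ is the Laplace transform of a non-degenerate law on $\mathbf R_{+}^{2}$---hence bounded below by some $c_2>0$. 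Since $z_1+z_2\le k$ and each factor lies in $[0,1]$, this gives $\mathbf P^{(n)}(m_1,\mathbf z;m_2,\mathcal C(k))\ge(\min\{1-U-\delta,c_2\})^{k}=:c_0>0$ for all large $n$, uniformly in $\mathbf z\in\mathcal C(k)$ and $u_1\in[0,U]$.

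The hard part will be ensuring that the two displayed limits are attained \emph{uniformly in $u_1\in[0,U]$}, since only then does one obtain a single threshold $n_1$ beyond which $\mathbf P^{(n)}(m_1,\mathbf z;m_2,\mathcal C(k))\ge c_0$ holds for all admissible $u_1$ and $\mathbf z$ simultaneously. I expect this uniformity to come from the uniform convergence theorem for regularly varying functions: all the time-arguments that enter \eqref{expreZ1k1}, Lemma~\ref{L_convol} and Theorem~\ref{T_VatSag}(1) in this computation---namely $(U+\delta-u_1)n$, $(1-U-\delta)n$ and $(1-u_1)n$---have coefficients bounded away from $0$ and $\infty$ as $u_1$ ranges over the compact set $[0,U]$, so the ratios of the corresponding $Q$-functions converge uniformly. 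All remaining estimates are routine and follow the pattern of the proofs of Lemmas~\ref{L_ceretain event.} and~\ref{L_ratio}.
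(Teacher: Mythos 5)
Your proposal is correct in outline and shares the skeleton of the paper's argument: both combine Lemma \ref{L_ratio} with the bounds (\ref{ProbBelow1})--(\ref{ProbBelow2}) (equivalently Lemma \ref{L_ceretain event.}) and both reduce the needed lower bound on $\mathbf{P}^{(n)}(m_{1},\mathbf{z};m_{2},\mathcal{C}(k))$ to the product $\prod_{j}(\mathbf{P}^{(n)}(m_{1},\mathbf{e}_{j};m_{2},\{\mathbf{e}_{j}\}))^{z_{j}}$ via \eqref{decrease} and the branching property. Where you diverge is in how that product is bounded below uniformly in $u_{1}\in[0,U]$. You compute the $u_{1}$-dependent limits of the two single-particle factors explicitly (via Theorem \ref{T_VatSag}(1), \eqref{Inpsi} and positivity of $\partial\psi/\partial\lambda_{2}$) and then must show these limits are attained uniformly in $u_{1}$ --- the step you yourself flag as ``the hard part.'' The paper sidesteps this entirely with a one-line monotonicity observation: since no branching on $[0,(U+\delta)n]$ implies no branching on the subinterval $[m_{1},m_{2}]$, the Markov property gives $\mathbf{P}^{(n)}(m_{1},\mathbf{e}_{j};m_{2},\{\mathbf{e}_{j}\})\geq\mathbf{P}^{(n)}(0,\mathbf{e}_{j};(U+\delta)n,\{\mathbf{e}_{j}\})$, whose right-hand side does not depend on $u_{1}$ at all and converges (by Subsection \ref{SubVatSagMicro}) to $\mathbf{P}(\mathbf{W}(-\log(1-U-\delta))=\mathbf{e}_{j}\,|\,\mathbf{W}(0)=\mathbf{e}_{j})>0$, yielding the uniform constant $B>0$ for free. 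Your route is workable, but the uniformity you defer is not quite covered by the uniform convergence theorem for regularly varying functions alone: the type-$2$ factor rests on the limit \eqref{equivvatsag}, so you would additionally need local uniformity of that Laplace-transform convergence in $(\lambda_{1},\lambda_{2})$ and continuity of $\partial\psi/\partial\lambda_{2}$ on the relevant compact set --- standard but unproved in your sketch. I recommend replacing your ``hard part'' with the paper's monotone comparison to the fixed interval $[0,(U+\delta)n]$, which makes the uniformity issue disappear; the rest of your bookkeeping (the inequality $AB/(B+c)\geq A/(1+c/c_{0})$ and the order of limits $n\to\infty$, $g\downarrow 0$, $\varepsilon\downarrow 0$) matches the paper and is fine.
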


\begin{proof}
Let us first notice that from \eqref{decompoJ1J2} we can deduce
that if $\mathbf{w}=(w_{1},w_{2})\leq \mathbf{z}=(z_{1},z_{2})$ (where the
inequality is understood componentwise) then,
\begin{multline}  \label{decrease}
\mathbf{P}^{(n)}(m_{0},\mathbf{w};m_{1},\left\{ \mathbf{w}\right\} ) = \left( \frac{\hat{J}_{1}^{(m_0,m_1 ,n)}(s_{1})}{\partial s_1}%
\left\vert_{s_1=0}\right.\right) ^{w_{1}} \left( \frac{\hat{J}_{2}^{(m_0,m_1
,n)}(\mathbf{s})}{\partial s_2}\left\vert_{\mathbf{s}=\mathbf{0}%
}\right.\right) ^{w_{2}} \\
\geq \left( \frac{\hat{J}_{1}^{(m_0,m_1 ,n)}(s_{1})}{\partial s_1%
}\left\vert_{s_1=0}\right.\right) ^{z_{1}} \left( \frac{\hat{J}%
_{2}^{(m_0,m_1 ,n)}(\mathbf{s})}{\partial s_2}\left\vert_{%
\mathbf{s}=\mathbf{0}}\right.\right) ^{z_{2}} = \mathbf{P}^{(n)}(m_{0},%
\mathbf{z};m_{1},\left\{ \mathbf{z}\right\} ).
\end{multline}
By Lemma \ref{L_ratio}, Equations (\ref{ProbBelow1}) and (\ref%
{ProbBelow2}), for $m_{0}=u_{0}n$, $m_{1}=u_{1}n$ and $\mathbf{%
z} \in \mathcal{C}(k)$
\begin{multline*}
\mathbf{P}^{(n)}(\mathbf{Z}(m_{1},n)=\mathbf{z}|\mathbf{Z}(m_{0},n)=\mathbf{z%
};\left\Vert \mathbf{Z}(m_{2},n)\right\Vert \leq k) \\
\geq \left( 1-C_{2}(g)-\varepsilon \right) ^{k}\left( 1-C_{1}g-\varepsilon
\right) ^{k}\frac{\mathbf{P}^{(n)}(m_{1},\mathbf{z};m_{2},\mathcal{C}(k))}{%
\mathbf{P}^{(n)}(m_{1},\mathbf{z};m_{2},\mathcal{C}(k))\mathbf{+}\left(
C_{4}(g)+2\varepsilon \right) k}.
\end{multline*}
Using the decomposability hypothesis, the Markov property of the reduced
processes, the inequality $m_{2}-m_{1}\leq \left( U+\delta \right) n$ and \eqref{decrease} we obtain
\begin{multline*}
\mathbf{P}^{(n)}(m_{1},\mathbf{z};m_{2},\mathcal{C}(k))\geq \mathbf{P}%
^{(n)}(m_{1},\mathbf{z};m_{2},\left\{ \mathbf{z}\right\} ) =\prod_{j=1}^{2}(%
\mathbf{P}^{(n)}(m_{1},\mathbf{e}_{j};m_{2},\left\{ \mathbf{e}_{j}\right\}
))^{z_{j}} \\
\geq \prod_{j=1}^{2}(\mathbf{P}^{(n)}(m_{1},\mathbf{e}_{j};m_{2},\left\{
\mathbf{e}_{j}\right\} ))^{k} \geq \prod_{j=1}^{2}\left( \mathbf{P}^{(n)}(0,%
\mathbf{e}_{j};\left( U+\delta \right) n,\left\{ \mathbf{e}_{j}\right\}
)\right) ^{k}.
\end{multline*}%
It follows from Subsection \ref{SubVatSagMicro} that
\begin{equation*}
\lim_{n\rightarrow \infty }\prod_{j=1}^{2}\left( \mathbf{P}^{(n)}(0,\mathbf{e%
}_{j};\left( U+\delta \right) n,\left\{ \mathbf{e}_{j}\right\} )\right)
^{k}=\prod_{j=1}^{2}\mathbf{P}^{k}(\mathbf{W}(-\log(1-U-\delta)
)=\mathbf{e}_{j}|\mathbf{W}(0)=\mathbf{e}_{j})=B>0.
\end{equation*}%
Hence we get%
\begin{multline*}
\varliminf_{n\rightarrow \infty }\inf_{\mathcal{U}(g)}\inf_{\mathbf{z}\in
\mathcal{C}(k)}\mathbf{P}_{n}(\mathbf{Z}(m_{1};n)=\mathbf{z}|\mathbf{Z}%
(m_{0};n)=\mathbf{z},\left\Vert \mathbf{Z}(m_{2},n)\right\Vert \leq k) \\
\geq \left( 1-C_{2}(g)-\varepsilon \right) ^{k}\left( 1-C_{1}g-\varepsilon
\right) ^{k}\frac{B}{B\mathbf{+}\left( C_{4}(g)+2\varepsilon \right) k}.
\end{multline*}%
Letting first $g\downarrow 0$ and then $\varepsilon \downarrow 0$ completes
the proof of the lemma.
\end{proof}


Combining the statement of Lemma \ref{L_Kskrokh} and Theorem \ref%
{T_skoroh} about tightness and the finite dimensional convergence
established in Section \ref{SubVatSagMicro} point (1), we obtain
the following statement:

\begin{corollary}
\label{C_skk}Under the conditions of Lemma \ref{L_Kskrokh}
\begin{eqnarray*}
&&\mathcal{L}\left\{ \mathbf{Z}(un,n),0\leq u\leq U\,\Big|\,\left\Vert
\mathbf{Z}(m_{2},n)\right\Vert \leq k,\mathbf{Z}(n)\neq \mathbf{0}\right\} \\
&&\qquad \qquad \qquad \qquad \qquad \quad \,\Longrightarrow \mathcal{L}%
_{(0,1)}\left\{ \mathbf{W}(u),0\leq u\leq U|\,\left\Vert \mathbf{W}(U+\delta
)\right\Vert \leq k\right\} .
\end{eqnarray*}
\end{corollary}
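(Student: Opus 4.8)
The plan is to obtain Corollary~\ref{C_skk} from Theorem~\ref{T_skoroh} applied to the family $\mathbf{K}_{n}(u):=\{\mathbf{Z}(un,n)\,|\,\|\mathbf{Z}(m_{2},n)\|\le k,\ \mathbf{Z}(n)\neq\mathbf{0}\}$, $0\le u\le U$, with $m_{2}=(U+\delta)n$, feeding in as its two hypotheses the finite-dimensional convergence established in Subsection~\ref{SubVatSagMicro}(1) and the tightness estimate already proved as Lemma~\ref{L_Kskrokh}. First I would record the Markov structure: by Lemma~\ref{L_convol} and the remark following it, $\{\mathbf{Z}(m,n),0\le m\le n\,|\,\mathbf{Z}(n)\neq\mathbf{0}\}$ is, for each fixed $n$, an inhomogeneous Markov branching chain, and conditioning it additionally on the event $\{\|\mathbf{Z}(m_{2},n)\|\le k\}$, which depends only on the state at the single instant $m_{2}>Un$, is a Doob $h$-transform that preserves the Markov property on $[0,m_{2}]$, hence on $[0,Un]$; thus $\mathbf{K}_{n}$ is of the type handled by Theorem~\ref{T_skoroh}. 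Throughout, as in Theorem~\ref{T_VatSagmicro}(1), the time $m=un$ corresponds to $t=-\log(1-u)$, so that in the statement $\mathbf{W}(u)$ denotes $\mathbf{W}$ read at time $-\log(1-u)$ and $m_{2}$ corresponds to $T:=-\log(1-U-\delta)<\infty$.

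For the finite-dimensional convergence of $\mathbf{K}_{n}$ I would fix $0\le u_{1}<\dots<u_{p}\le U$ and states $\mathbf{z}_{1},\dots,\mathbf{z}_{p}\in\mathbb{Z}_{+}^{2}$, set $t_{j}=-\log(1-u_{j})$, and express the conditional joint probability of $\{\mathbf{Z}(u_{j}n,n)=\mathbf{z}_{j},\ j=1,\dots,p\}$ as the quotient of $\mathbf{P}^{(n)}(\mathbf{Z}(u_{1}n,n)=\mathbf{z}_{1},\dots,\mathbf{Z}(u_{p}n,n)=\mathbf{z}_{p},\ \|\mathbf{Z}(m_{2},n)\|\le k)$ by $\mathbf{P}^{(n)}(\|\mathbf{Z}(m_{2},n)\|\le k)$; adjoining $m_{2}$ to the list of time points is legitimate because $U+\delta<1$. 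By the finite-dimensional convergence of Subsection~\ref{SubVatSagMicro}(1), and since $\{\|\cdot\|\le k\}$ is a finite subset of $\mathbb{Z}_{+}^{2}$ so that weak convergence of these $\mathbb{Z}_{+}^{2}$-valued vectors forces convergence of the corresponding probabilities, the numerator tends to $\mathbf{P}(\mathbf{W}(t_{1})=\mathbf{z}_{1},\dots,\mathbf{W}(t_{p})=\mathbf{z}_{p},\ \|\mathbf{W}(T)\|\le k\,|\,\mathbf{W}(0)=\mathbf{e}_{2})$ and the denominator to $\mathbf{P}(\|\mathbf{W}(T)\|\le k\,|\,\mathbf{W}(0)=\mathbf{e}_{2})$. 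This last limit is positive: with probability $e^{-\kappa T}>0$ the single initial type-$2$ particle of $\mathbf{W}$ undergoes no reproduction event up to time $T$, whence $\mathbf{W}(T)=\mathbf{e}_{2}$. Dividing identifies the limit as the finite-dimensional law of $\{\mathbf{W}(t),0\le t\le T\,|\,\|\mathbf{W}(T)\|\le k,\ \mathbf{W}(0)=\mathbf{e}_{2}\}$, reparametrized by $u=1-e^{-t}$.

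For tightness I would observe that $\|\mathbf{Z}(m,n)\|$ is non-decreasing in $m$ (every reduced-tree particle at time $m$ has at least one reduced-tree descendant at time $m+1$, and for distinct parents these descendant sets are disjoint), so on $\{\|\mathbf{Z}(m_{2},n)\|\le k\}$ one has $\|\mathbf{Z}(m,n)\|\le k$ for all $m\le m_{2}$, in particular $\mathbf{K}_{n}(u)\in\mathcal{C}(k)$ for every $u\in[0,U]$. Consequently the supremum over $\mathbf{y}\in\mathbb{R}^{2}$ in the modulus condition of Theorem~\ref{T_skoroh} may be restricted to $\mathbf{y}\in\mathcal{C}(k)$, and in that form the condition is precisely the assertion of Lemma~\ref{L_Kskrokh} (with $m_{0}=u_{0}n$, $m_{1}=u_{1}n$, $m_{2}=(U+\delta)n$ and $(u_{1},u_{0})\in\mathcal{U}(g)$). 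With both hypotheses of Theorem~\ref{T_skoroh} in force on $[0,U]$, I obtain the claimed convergence in $\mathbf{D}_{[0,U]}(\mathbb{R}^{2})$ with the Skorokhod topology. The one genuinely delicate part of the whole argument is the tightness estimate itself, which is already settled in Lemma~\ref{L_Kskrokh}; in the present assembly the only point demanding a little care is the passage to the conditioned finite-dimensional distributions of the second paragraph, which rests on the continuity-set property of $\{\|\cdot\|\le k\}$ for $\mathbb{Z}_{+}^{2}$-valued weak limits and, crucially, on the strict positivity of $\mathbf{P}(\|\mathbf{W}(T)\|\le k\,|\,\mathbf{W}(0)=\mathbf{e}_{2})$, without which the conditioning on the limit side would be ill defined.
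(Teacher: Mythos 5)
Your proposal is correct and follows essentially the same route as the paper, which obtains the corollary by combining the tightness estimate of Lemma~\ref{L_Kskrokh} with the finite-dimensional convergence of Subsection~\ref{SubVatSagMicro}(1) via the criterion of Theorem~\ref{T_skoroh}. The details you supply --- preservation of the Markov property under conditioning on the state at time $m_{2}$, the quotient argument for the conditioned finite-dimensional laws with the positivity of $\mathbf{P}(\Vert \mathbf{W}(U+\delta)\Vert \leq k)$, and the monotonicity of $\Vert \mathbf{Z}(m,n)\Vert$ in $m$ to restrict the supremum to $\mathcal{C}(k)$ --- are exactly the points the paper leaves implicit.
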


\noindent \textbf{Final steps in proving Theorem \ref{T_VatSagmicro}(1):}
Let for $0<U<U_{1}=U+\delta <1$
\begin{equation*}
\begin{array}{lll}
\mathbf{P}^{(n)}(U;(\cdot )) & = & \mathbf{P}^{(n)}(\left\{ \mathbf{Z}%
(un,n),0\leq u\leq U\right\} \in (\cdot )), \\
\mathbf{P}^{(n,k)}(U,U_{1};(\cdot )) & = & \mathbf{P}^{(n)}(\left\{ \mathbf{Z%
}(un,n),0\leq u\leq U\right\} \in (\cdot )|\left\Vert \mathbf{Z}%
(U_{1}n,n)\right\Vert \leq k), \\
\mathbf{\bar{P}}^{(n,k)}(U,U_{1};(\cdot )) & = & \mathbf{P}^{(n)}(\left\{
\mathbf{Z}(un,n),0\leq u\leq U\right\} \in (\cdot )|\left\Vert \mathbf{Z}%
(U_{1}n,n)\right\Vert >k) \\
\mathcal{P}(U;(\cdot )) & = & \mathbf{P}(\left\{ \mathbf{W}(u),0\leq u\leq
U\right\} \in (\cdot )), \\
\mathcal{P}^{(k)}(U,U_{1};(\cdot )) & = & \mathbf{P}(\left\{ \mathbf{W}%
(u),0\leq u\leq U\right\} \in (\cdot )|\left\Vert \mathbf{W}%
(U_{1})\right\Vert \leq k).%
\end{array}%
\end{equation*}%
Then, for a continuous real function $\psi $ on $\mathbf{D}_{[0,U]}(\mathbf{Z}%
_{+}^{2})$ such that $\left\vert \psi \right\vert \leq q$ for a positive $q$
we have
\begin{eqnarray*}
\int \psi (x)\mathbf{P}^{(n)}(U;dx) &=&\mathbf{P}^{(n)}(\left\Vert \mathbf{Z}%
(U_{1}n,n)\right\Vert >k)\int \psi (x)\mathbf{\bar{P}}^{(n,k)}(U,U_{1};dx) \\
&&+\mathbf{P}^{(n)}(\left\Vert \mathbf{Z}(U_{1}n,n)\right\Vert \leq k)\int
\psi (x)\mathbf{P}^{(n,k)}(U,U_{1};dx).
\end{eqnarray*}%
For the first summand we get%
\begin{multline*}
\lim \sup_{n\rightarrow \infty }\mathbf{P}^{(n)}(\left\Vert \mathbf{Z}%
(U_{1}n,n)\right\Vert >k)\int \psi (x)\mathbf{\bar{P}}^{(n,k)}(U,U_{1};dx) \\
\leq q\lim \sup_{n\rightarrow \infty }\mathbf{P}^{(n)}(\left\Vert \mathbf{Z}%
(U_{1}n,n)\right\Vert >k)=q\mathbf{P}(\left\Vert \mathbf{W}%
(U_{1})\right\Vert >k)=o(1)
\end{multline*}%
as $k\rightarrow \infty $ by the properties of $\mathbf{W}(\cdot )$. On the
other hand, letting first $n\rightarrow \infty $ and then $k\rightarrow
\infty $ we obtain from Lemma \ref{L_Kskrokh} and Theorem \ref%
{T_skoroh},
\begin{multline*}
\lim_{k\rightarrow \infty }\lim_{n\rightarrow \infty }\mathbf{P}%
_{n}(\left\Vert \mathbf{Z}(U_{1}n,n)\right\Vert \leq k)\int \psi (x)\mathbf{P%
}^{(k)}(U,U_{1};dx) \\
=\lim_{k\rightarrow \infty }\mathbf{P}(0<\left\Vert \mathbf{W}%
(U_{1})\right\Vert \leq k)\int \psi (x)\mathcal{P}^{(k)}(U,U_{1};dx) \\
=\lim_{k\rightarrow \infty }\int_{\left\{ 0<\left\Vert \mathbf{W}%
(U_{1})\right\Vert \leq k\right\} }\psi (x)\mathcal{P}(U,U_{1};dx)=\int \psi
(x)\mathcal{P}(U;dx).
\end{multline*}%
Thus,%
\begin{equation*}
\lim_{n\rightarrow \infty }\int \psi (x)\mathbf{P}^{(n)}(\mathbf{Z}((\cdot
)n,n)\in dx)=\int \psi (x)\mathcal{P}(U;dx)
\end{equation*}%
for any bounded continuous function on $\mathbf{D}_{[0,U]}(\mathbf{Z}_{+}^{2})$
proving Theorem \ref{T_VatSagmicro}(1).\newline

\noindent \textbf{Proof of tightness for Theorem \ref{T_VatSagmicro} (2).} For $0<t<\infty $ and $1\ll h(n)\ll n$ let
\begin{equation*}
\mathbf{\hat{Z}}(n;t)=\left( \hat{Z}_{1}(n;t),\hat{Z}_{2}(n;t)\right)
=\left( \frac{Q_{21}(n)}{nQ_{1}(th(n))}Z_{1}(n-th(n),n),\frac{Q_{2}(n)}{%
Q_{2}(th(n))}Z_{2}(n-th(n),n)\right) .
\end{equation*}%
For $0<t_{1}<t_{0}$ we write%
\begin{equation*}
Z_{1}(n-t_{1}h(n),n)=Z_{1}^{(1)}(n-t_{1}h(n),n;t_{0})+Z_{1}^{(2)}(n-t_{1}h(n),n;t_{0})
\end{equation*}%
where $Z_{1}^{(i)}(n-t_{1}h(n),n;t_{0}),i=1,2$ is the number of
type $1$ particles in the reduced process at moment $n-t_{1}h(n)$ with type $%
i$ ancestors at time $n-t_{0}h(n)$. Set
\begin{equation*}
\mathbf{\hat{z}}_{n}=\left( \hat{z}_{n1},\hat{z}_{n2}\right) =
\left( \frac{Q_{21}(n)}{nQ_{1}(t_{0}h(n))}z_{1},\frac{Q_{2}(n)}{%
Q_{2}(t_{0}h(n))}z_{2}\right) ,\quad r_{ni}=\frac{%
Q_{i}(t_{1}h(n))}{Q_{i}(t_{0}h(n))},\quad i\in \{1,2\}
\end{equation*}%
and, for $\varepsilon >0$
\begin{equation*}
\Lambda _{n1}(t)=\varepsilon {nQ_{1}(th(n))}/{Q_{21}(n)},\quad \Lambda
_{n2}(t)=\varepsilon {Q_{2}(th(n))}/{Q_{2}(n)}.
\end{equation*}%
Note that under our conditions $\Lambda _{ni}(t)\rightarrow \infty $ as $%
n\rightarrow \infty $. With these notations we have 
\begin{multline*}
\left\{ \left\Vert \mathbf{\hat{Z}}(n;t_{1})-\mathbf{\hat{z}}_{n}\right\Vert
>3\varepsilon \right\} \subset \Big\{\left\vert
Z_{2}(n-t_{1}h(n),n)-r_{n2}z_{2}\right\vert >\Lambda _{n2}(t_{1})\Big\} \\
\cup \left\{ \left\vert
Z_{1}^{(1)}(n-t_{1}h(n),n;t_{0})-r_{n1}z_{1}\right\vert >\Lambda
_{n1}(t_{1})\right\} \cup \left\{ Z_{1}^{(2)}(n-t_{1}h(n),n;t_{0})>\Lambda
_{n1}(t_{1})\right\} .
\end{multline*}%
To go further we need the representation%
\begin{equation*}
\hat{Z}_{2}(n;t_{1})=\frac{Q_{2}(n)}{Q_{2}(t_{1}h(n))}%
\sum_{k=1}^{Z_{2}(n-t_{0}h(n),n)}\zeta _{k}(n;t_{0},t_{1}),
\end{equation*}%
where
$\zeta _{k}(n;t_{0},t_{1})\overset{d}{=}\zeta (n;t_{0},t_{1})
$ and $\zeta (n;t_{0},t_{1})$ is the number of type $2$ particles in the
reduced process at time $n-t_{1}h(n)$ being descendants of a particle of the
reduced process existing at time $n-t_{0}h(n)$. Observe that $\zeta
(n;t_{0},t_{1})\leq Z_{2}(n-t_{1}h(n))$ and $\mathbf{E}Z_{2}^{\gamma
}(n-t_{1}h(n))<\infty $ for any $\gamma \in \lbrack 1,1+\alpha _{2})$ in
view of Condition (\ref{F2}) and Theorem 2.3.7 in \cite{Sev72}. 
Notice that
\begin{eqnarray*}
\mathbf{E}\left[ \zeta (n;t_{0},t_{1})|Z_{2}(n-t_{0}h(n),n)=1\right]  &=&%
\mathbf{E}_{2}\left[ Z_{2}(\left( t_{0}-t_{1}\right) h(n),t_{0}h(n))|Z_{2}(t_{0}h(n))>0\right]  \\
&=&{Q_{2}(t_{1}h(n))}/{Q_{2}(t_{0}h(n))}
\end{eqnarray*}%
and that $\{\zeta _{k}(n;t_{0},t_{1}),\
k=1,2,...,Z_{2}(n-t_{0}h(n),n)\}$ are iid given $Z_{2}(n-t_{0}h(n),n)$.
Using von Bahr-Esseen inequality (\cite{BE65} Theorem 2) for $%
\gamma \in \left( 1,1+\alpha _{2}\right) ,$ Markov branching property and
Lemma \ref{L_convol}, we obtain that
\begin{multline*}
\left( {\Lambda _{n2}(t_{1})}\right) ^{\gamma }\mathbf{P}^{(n)}\left(
\,\left\vert Z_{2}(n-t_{1}h(n),n)-r_{n2}z_{2}\right\vert
>\Lambda _{n2}(t_{1})|\mathbf{\hat{Z}}(n;t_{0})=\mathbf{\hat{z}}_{n}\right)
\\
\leq \mathbf{E}\left[ \left\vert Z_{2}(n-t_{1}h(n),n)
-r_{n2}z_{2}\right\vert ^{\gamma }|\mathbf{\hat{Z}}(n;t_{0})=\mathbf{\hat{z}}%
_{n}\mathbf{,Z}(n)\neq \mathbf{0}\right]  \\
\leq 2\mathbf{E}\left[ \sum_{k=1}^{z_{2}}\left\vert \zeta
_{k}(n;t_{0},t_{1})-r_{n2}\right\vert ^{\gamma }|\mathbf{\hat{Z}}(n;t_{0})=%
\mathbf{\hat{z}}_{n}\mathbf{,Z}(n)\neq \mathbf{0}\right]  \\
=2z_{2}\mathbf{E}_{2}\left[ \left\vert Z_{2}(\left(
t_{0}-t_{1}\right) h(n),t_{0}h(n))-r_{n2}\right\vert ^{\gamma }|\mathbf{Z}%
(t_{0}h(n))\neq \mathbf{0}\right]  \\
\leq 2z_{2}\left( \frac{2^{\gamma }}{Q_{21}(t_{0}h(n))}\mathbf{E}%
_{2}\left[ Z_{2}^{\gamma }(\left( t_{0}-t_{1}\right) h(n),t_{0}h(n))\right]
+2^{\gamma }r_{n2}^{\gamma }\right) ,
\end{multline*}%
where in the last inequality we have applied, for $a,b\in \mathbf{R}_{2}$
\begin{equation}
|a-b|^{\gamma }\leq \max (|2a|^{\gamma },|2b|^{\gamma })\leq 2^{\gamma
}(|a|+|b|).  \label{ineq2gamma}
\end{equation}%
Further, let%
\begin{equation*}
I_{k}\left( t_{0},t_{1};n\right) ,\quad k=1,2,...,Z_{2}(\left(
t_{0}-t_{1}\right) h(n))
\end{equation*}%
be the indicator of the event that the $k$th particle belonging to
generation $\left( t_{0}-t_{1}\right) h(n)$ has descendants in generation $%
t_{0}h(n)$ and
$
I_{k}\left( t_{0},t_{1};n\right) \overset{d}{=}I\left( t_{0},t_{1};n\right) .
$
Now, again by \eqref{ineq2gamma} and the Bahr-Esseen inequality
\begin{multline*}
2^{-\gamma }\mathbf{E}_{2}\left[ \left\vert Z_{2}(\left( t_{0}-t_{1}\right)
h(n),t_{0}h(n))\right\vert ^{\gamma }\right]  \\
=2^{-\gamma }\mathbf{E}_{2}\left[ \left\vert \sum_{k=1}^{Z_{2}(\left(
t_{0}-t_{1}\right) h(n))}\left( I_{k}\left( t_{0},t_{1};n\right)
-Q_{2}(t_{1}h(n))\right) +Z_{2}(\left( t_{0}-t_{1}\right)
h(n))Q_{2}(t_{1}h(n))\right\vert ^{\gamma }\right]  \\
\leq \mathbf{E}_{2}\left[ \left\vert \sum_{k=1}^{Z_{2}(\left(
t_{0}-t_{1}\right) h(n))}\left( I_{k}\left( t_{0},t_{1};n\right)
-Q_{2}(t_{1}h(n))\right) \right\vert ^{\gamma }\right] +\mathbf{E}_{2}\left[
Z_{2}^{\gamma }(\left( t_{0}-t_{1}\right) h(n))Q_{2}^{\gamma }(t_{1}h(n))%
\right]  \\
\leq \mathbf{E}_{2}\left[ Z_{2}(\left( t_{0}-t_{1}\right) h(n))\right]
\mathbf{E}_{2}\left[ \left\vert \mathbf{1}_{\{Z_{2}(t_{1}h(n))>0%
\}}-Q_{2}(t_{1}h(n))\right\vert ^{\gamma }\right]  \\
+Q_{2}^{\gamma }(t_{1}h(n))\mathbf{E}_{2}\left[ Z_{2}^{\gamma }(\left(
t_{0}-t_{1}\right) h(n))\right] .
\end{multline*}%
Observe now that
\begin{multline*}
\mathbf{E}\left[ \left\vert \mathbf{1}_{\{Z_{2}(t_{1}h(n))>0%
\}}-Q_{2}(t_{1}h(n))\right\vert ^{\gamma }\right]  \\
=Q_{2}(t_{1}h(n))\left\vert 1-Q_{2}(t_{1}h(n))\right\vert ^{\gamma }+\left(
1-Q_{2}(t_{1}h(n))\right) Q_{2}^{\gamma }(t_{1}h(n))\leq CQ_{2}(t_{1}h(n))
\end{multline*}%
and by Lemma 11 in \cite{VVF}%
\begin{equation*}
\mathbf{E}_{2}\left[ Z_{2}^{\gamma }(\left( t_{0}-t_{1}\right) h(n))\right]
\leq CQ_{2}^{1-\gamma }(\left( t_{0}-t_{1}\right) h(n))\leq CQ_{2}^{1-\gamma
}(t_{0}h(n)).
\end{equation*}%
As a result we get%
\begin{equation*}
\mathbf{E}_{2}\left[ Z_{2}^{\gamma }(\left( t_{0}-t_{1}\right)
h(n),t_{0}h(n))\right] \leq CQ_{2}(t_{1}h(n)).
\end{equation*}%
This shows that, for each fixed pair $0<p_{1}<p_{2}<\infty $
\begin{equation*}
\mathbf{P}^{(n)}\left( \,\left\vert Z_{2}
(n-t_{1}h(n),n)-r_{n2}z_{2}\right\vert >\Lambda _{n2}(t_{1})|\mathbf{\hat{Z}%
}(n;t_{0})=\mathbf{\hat{z}}_{n}\right)\leq Cz_{2}\Big(\frac{1}{\Lambda
_{n}(t_{1})}\Big)^{\gamma }\leq C\Big(\frac{1}{\Lambda _{n}(p_{1})}\Big)%
^{\gamma -1}
\end{equation*}%
which goes to $0$ as $n\rightarrow \infty $ uniformly in $%
\left( t_{0},t_{1}\right) \in \left[ p_{1},p_{2}\right] $ and
\begin{equation*}
z_{2}\leq C\Lambda _{n2}(p_{1})\text{ }\ \text{or \ }\hat{z}%
_{2}=z_{2}/\Lambda _{n2}(t_{1})\leq C_{1}.
\end{equation*}
Further, for $\gamma \in (1,1+\alpha _{1})$ we have%
\begin{multline*}
\mathbf{P}^{(n)}\left( \left\vert Z_{1}^{(1)}(n-t_{1}h(n),n;t_{0})-r_{n1}z_{1}\right\vert >\Lambda _{n1}(t_{1})|\,%
\mathbf{\hat{Z}}(n;t_{0})=\mathbf{\hat{z}}_{n}\right)  \\
\\ \leq \left( \frac{1}{\Lambda _{n1}(t_{1})}\right) ^{\gamma }\mathbf{E}_{2}%
\left[ \left\vert Z_{1}^{(1)}(n-t_{1}h(n),n;t_{0})
-r_{n1}z_{1}\right\vert ^{\gamma }|\,\mathbf{\hat{Z}}(n;t_{0})=\mathbf{\hat{z%
}}_{n}\mathbf{,}\,\mathbf{Z}(n)\neq \mathbf{0}\right] .
\end{multline*}%
We now use the representation%
\begin{equation*}
Z_{1}^{(1)}(n-t_{1}h(n),n;t_{0})
=\sum_{k=1}^{Z_{1}(n-t_{0}h(n),n)}\eta _{k}(n;t_{0},t_{1})
\end{equation*}%
where
$\eta _{1k}(n;t_{0},t_{1})\overset{d}{=}\eta (n;t_{0},t_{1})
$ and $\eta (n;t_{0},t_{1})$ is the number of type $1$ particles in the
reduced process at time $n-t_{1}h(n)$ being descendants of a type $1$
particle existing in the reduced process at time $n-t_{0}h(n)$. Observe that
\begin{equation*}
\mathbf{E}\left[ \eta ^{\gamma }(n;t_{0},t_{1})\right] \leq \mathbf{E}_{1}%
\left[ Z_{1}^{\gamma }(n-t_{1}h(n))\right] <\infty
\end{equation*}%
for any $\gamma \in \lbrack 1,1+\alpha _{1})$ in view of
Condition (\ref{F1}) and Theorem 2.3.7 in \cite{Sev72}. Similarly to the previous estimates%
\begin{multline*}
\mathbf{E}\left[ \sum_{k=1}^{Z_{1}(n-t_{0}h(n),n)}\left\vert \eta
_{k}(n;t_{0},t_{1})-r_{n1}\right\vert ^{\gamma }|\mathbf{\hat{Z}}(n;t_{0})=%
\mathbf{\hat{z}}_{n},\mathbf{Z}(n)\neq \mathbf{0}\right]  \\
\leq 2z_{1}\mathbf{E}_{1}\left[ \left\vert \eta (n;t_{0},t_{1})-%
\frac{Q_{1}(t_{1}h(n))}{Q_{1}(t_{0}h(n))}\right\vert ^{\gamma
}|Z_{1}(n-t_{0}h(n),n)=1\right]  \\
=2z_{1}\mathbf{E}_{1}\left[ \left\vert Z_{1}(\left(
t_{0}-t_{1}\right) h(n),t_{0}h(n))-\frac{Q_{1}(t_{1}h(n))}{Q_{1}(t_{0}h(n))}%
\right\vert ^{\gamma }|Z_{1}(t_{0}h(n))>0\right] \leq Cz_{1}.
\end{multline*}%
Hence, for each fixed pair $0<p_{1}<p_{2}<\infty $ we get%
\begin{equation*}
\mathbf{P}^{(n)}\left( \left\vert Z_{1}^{(1)
}(n-t_{1}h(n),n;t_{0})-r_{n1}z_{1}\right\vert >\Lambda _{n1}(t_{1})|\mathbf{%
Z}(n;t_{0})=\mathbf{z}\right) \leq \left( \frac{1}{\Lambda _{n1}(t_{1})}%
\right) ^{\gamma }z_{1}\rightarrow 0
\end{equation*}%
as $n\rightarrow \infty $ uniformly in $\left( t_{0},t_{1}\right) \in \left[
\mathbf{p}_{1}\mathbf{,p}_{2}\right] $ and $z_{1}\leq C\Lambda _{n1}^{\gamma
}(p_{1})$ or $\hat{z}_{1}=z_{1}/\Lambda _{n1}^{\gamma }(t_{1})\leq C_{1}.$

Finally, using the branching property, we get
\begin{multline*}
\Lambda _{n1}(t_{1})\mathbf{P}^{(n)}\left( Z_{1}^{(2)%
}(n-t_{1}h(n),n;t_{0})>\Lambda _{n1}(t_{1})|\,\mathbf{\hat{Z}}(n;t_{0})=%
\mathbf{\hat{z}}_{n}\right)  \\
\leq \mathbf{E}\left[ Z_{1}^{(2)}(n-t_{1}h(n),n;t_{0})|\,%
\mathbf{\hat{Z}}(n;t_{0})=\mathbf{\hat{z}}_{n}\mathbf{,Z}(n)\neq \mathbf{0}%
\right]  \\
=z_{2}\mathbf{E}_{2}\left[ Z_{1}(\left( t_{0}-t_{1}\right)
h(n);t_{0}h(n))|\,\mathbf{Z}(t_{0}h(n))\neq \mathbf{0}\right] \\
=z_{2}\frac{A_{21}\left( t_{0}-t_{1}\right) h(n)Q_{1}(t_{1}h(n))%
}{Q_{21}(t_{0}h(n))}\leq C\frac{z_{2}}{\Lambda _{n1}(t_{1})}.
\end{multline*}%
Thus, for each fixed pair $0<p_{1}<p_{2}<\infty $%
\begin{equation*}
\mathbf{P}^{(n)}\left( Z_{1}^{(2)}%
(n-t_{1}h(n),n;t_{0})>\Lambda _{n1}(t_{1})|\mathbf{\hat{Z}}(n;t_{0})=%
\mathbf{\hat{z}}_{n}\right) \leq C\frac{z_{2}}{\Lambda _{n1}^{2}(t_{1})}%
\rightarrow 0
\end{equation*}%
as $n\rightarrow \infty $ uniformly in $\left( t_{0},t_{1}\right) \in \left[
p_{1},p_{2}\right] $ and $\hat{z}_{2}=z_{2}/\Lambda _{n1}(t_{1})\leq C_{1}.$

The remaining part of the proof follows the line of proving Theorem \ref%
{T_VatSagmicro}(1) and is based on the one-dimensional convergence
established in Theorem \ref{T_VatSag}(2). We omit the details.

\section{Most recent common ancestor\label{S_MRCA}}

We present in this section the derivation of the laws of the
MRCA. They are direct consequences of Theorems \ref{T_ZubQ1micro}-\ref%
{T_VatSagmicro}.

\noindent\textbf{Point (1)}: It follows from Theorem \ref{T_ZubQ1micro} that, given
Condition (\ref{Q1NeglQ2}) particles of type $2$ are always
present in the limiting process. Thus, the MRCA should be of type $2$. \
Since, given $\mathbf{Z}(n)\neq \mathbf{0}$ the event $\left\{
Z_{2}(m,n)=1\right\} $ has a nonnegligible probability only if $%
\lim_{n\rightarrow \infty }mn^{-1}<1,$ to find the distribution of the MRCA
in this situation it is sufficient to evaluate the derivative with respect
to $s_{2}$ of the limiting probability generating function (\ref{Zub12}) at
point $s_{2}=0$. This gives, as desired
\begin{equation*}
\lim_{n\rightarrow \infty }\mathbf{P}_{2}\left( Z_{2}\left( an,n\right) =1|%
\mathbf{Z}(n)\neq \mathbf{0}\right) =\lim_{n\rightarrow \infty }\mathbf{P}%
_{2}\left( \beta _{n}\geq an|\mathbf{Z}(n)\neq \mathbf{0}\right) =1-a.
\end{equation*}

\noindent \textbf{Point (2)}: According to the offspring generating
function of the limit process, given in \eqref{pgfeq1}, the law of the
offspring vector $(\xi _{21},\xi _{22})$ produced by a type $2$ individual satisfies:
\begin{equation}
\mathbf{P}(\xi _{21}=0,\xi _{22}\geq 2)=\frac{\alpha _{2}}{1+\alpha _{2}}%
\quad \text{and}\quad \mathbf{P}(\xi _{21}=1,\xi _{22}=0)=\frac{1}{1+\alpha
_{2}}.  \label{prop1Y}
\end{equation}%
Hence there are two possibilities:

\begin{enumerate}
\item[$\bullet $] Either the initial type $2$ individual generates several
type $2$ individuals at the firth birth event. Then the time of the
branching corresponds to the death of the MRCA.

\item[$\bullet $] Or the initial type 2 individual gives birth to a type 1
individual at the first birth event. Then the process does not evolve
anymore as the type 1 individuals are immortal and sterile. We
will see that in this case the MRCA birth time is of order
larger than $g^{\ast }(n)$.
\end{enumerate}

From \eqref{prop1Y} we get that
\begin{equation*}
\mathbf{P}(Y_{1}(t)+Y_{2}(t)=1\quad \text{for any }t>0)=\frac{1}{1+\alpha
_{2}}.
\end{equation*}%
Moreover, from points (2)-(3) of Theorem \ref{T_zubQ3}, we see that
for $g^{\ast }(n)\ll m\ll n$
\begin{equation*}
\lim_{n\rightarrow \infty }\mathbf{P}_{2}(Z_{1}(m,n)=1|\mathbf{Z}(n)\neq 0)=%
\frac{1}{1+\alpha _{2}},
\end{equation*}%
and for $a\in (0,1)$
\begin{equation*}
\lim_{n\rightarrow \infty }\mathbf{P}_{2}(Z_{1}(an,n)=1|\mathbf{Z}(n)\neq 0)=%
\frac{1-a}{1+\alpha _{2}}.
\end{equation*}%
As the process $Z_{1}(m,n)$ is non-decreasing with $m$, we
deduce that for  $g^{\ast }(n)\ll m\ll n$ and $a\in (0,1)$,
\begin{equation*}
\lim_{n\rightarrow \infty }\mathbf{P}_{2}(m\ll \beta _{n}\leq an|\mathbf{Z}%
(n)\neq 0)=\frac{a}{1+\alpha _{2}}.
\end{equation*}%
If, on the contrary the initial type 2 individual gives birth to several
type 2 individuals, which happens with a probability $\alpha
_{2}/(1+\alpha _{2})$, the time of this first branching will be an
exponential variable with parameter $(1+\alpha _{2})/\alpha _{2}$.
Converting this distribution in terms of the limit law for the MRCA yields
\begin{equation*}
\lim_{n\rightarrow \infty }\mathbf{P}_{2}(\beta _{n}\leq tg^{\ast }(n)|%
\mathbf{Z}(n)\neq 0)=\frac{\alpha _{2}}{1+\alpha _{2}}(1-e^{-t(1+\alpha
_{2})/\alpha _{2}}).
\end{equation*}%
This ends the proof of Theorem \ref{T_MRCA}(2).

\noindent\textbf{Point (3)}: To find the distribution of the MRCA birth time under
Condition (\ref{VaSag}) \ we use the offspring generating functions of the
process $\left\{ \mathbf{W}(t),t\geq 0\right\} $, which are given in~%
\eqref{pgfeq3} and \eqref{pgfeqW}. Equation \eqref{pgfeqW} implies the
following properties for the law of the offspring vector $(\xi _{21},\xi
_{22})$ produced by a type $2$ individual of $\mathbf{W(\cdot )}$:
\begin{equation}
\mathbf{P}_{2}(\xi _{21}=1,\xi _{22}=0)=\frac{\sigma \alpha _{2}A_{21}}{%
b((1+\alpha _{2})b^{\alpha _{2}}-1)},\  \mathbf{P}_{2}(\xi _{21}=0,\xi
_{22}\geq 2)=\frac{\alpha _{2}b^{\alpha _{2}}}{(1+\alpha _{2})b^{\alpha
_{2}}-1}.  \label{prop1W}
\end{equation}%
Moreover, we see from the definition of $W_{1}(\cdot )$ and \eqref{pgfeq3}
that a type $1$ individual dies after an exponentially distributed time with
parameter $1$ and gives birth to two type $1$ individuals at least. Hence if
we denote by $T$ the time of the first branching there are two possibilities:

\begin{enumerate}
\item[$\bullet $] Either the initial type $2$ individual gives birth to
several type $2$ individuals. In this case $T$ will be an exponential
variable with parameter $\kappa$, where $\kappa$ has been
defined in \eqref{defkappa}. According to \eqref{prop1W} this happens with
probability
\begin{equation*}
\frac{\alpha _{2}b^{\alpha _{2}}}{(1+\alpha _{2})b^{\alpha _{2}}-1} = \frac{b^{\alpha_{2}}}{\kappa} .
\end{equation*}

\item[$\bullet $] Or the initial type $2$ individual gives birth to one type
$1$ individual. Then this type $1$ individual has an exponential distributed
life-time with parameter $1$. In this case, $T$ will be the sum of an
exponential variable with parameter $\kappa$ and of an
independent exponential variable with parameter $1$. According to %
\eqref{prop1W} this happens with probability
\begin{equation*}
\frac{\sigma \alpha _{2}A_{12}}{b((1+\alpha _{2})b^{\alpha _{2}}-1)}= \frac{\sigma A_{21}}{b\kappa} .
\end{equation*}
\end{enumerate}

Hence we get
\begin{equation*}
\mathbf{P}(T\leq t)=\frac{b^{\alpha _{2}}}{\kappa }(1-e^{-\kappa t})+\frac{%
\sigma A_{12}}{b\kappa }\Big(1+\frac{1}{\kappa -1}e^{-\kappa t}-\frac{\kappa
}{\kappa -1}e^{-t}\Big).
\end{equation*}%
By using several times the equality $b^{1+\alpha _{2}}-b=\sigma \alpha
_{2}A_{12}$ given in \eqref{defb}, we finally obtain
\begin{equation*}
\mathbf{P}(T\leq t)=1-\frac{1}{1+\alpha _{2}}e^{-t}-\frac{\alpha _{2}}{%
1+\alpha _{2}}e^{-t((1+\alpha _{2})b^{\alpha _{2}}-1)/\alpha _{2}}.
\end{equation*}%
Then by applying Theorem \ref{T_VatSagmicro}(1), we get for
every $a\in (0,1)$:
\begin{equation*}
\lim_{n\rightarrow \infty }\mathbf{P}(\beta _{n}\leq an)=\mathbf{P}(T\leq
-\ln (1-a)).
\end{equation*}%
This ends the proof of Theorem \ref{T_MRCA}(3).


\end{document}